\def\un{\hbox{\rm1\kern-.28em\hbox{I}}}
\newcommand*{\QEDB}{\hfill\ensuremath{\square}}
\DeclarePairedDelimiter\floor{\lfloor}{\rfloor}
\def\CC{\hbox{C\kern -.58em {\raise .54ex \hbox{$\scriptscriptstyle |$}}
		\kern-.55em {\raise .53ex \hbox{$\scriptscriptstyle |$}} }}
\def\ie{{\sl \thinspace i.e.},\ }
\def\un{\hbox{\rm1\kern-.28em\hbox{I}}}
\def\un{\hbox{\rm1\kern-.28em\hbox{I}}}
\begin{document}
\title{Computation of optimal linear strong stability preserving methods via 
adaptive spectral transformations of Poisson-Charlier measures
\thanks{ }}
	
\titlerunning{optimal threshold factors and spectral transformations}        
	
\author{Rachid Ait-Haddou}
	
\institute{Rachid Ait-Haddou \at Department of Mathematics and Statistics, King Fahd University of Petroleum $\&$ Minerals, \\ Dhahran 31261, Saudi Arabia.
\email{rachid.aithaddou@kfupm.edu.sa}         
}
	
\date{Received: date / Accepted: date}
	
\maketitle

\begin{abstract}
	Strong stability preserving (SSP) coefficients govern the maximally allowable step-size at which positivity 
	or contractivity preservation of integration methods for initial value problems
	is guaranteed. In this paper, we show that the task of computing linear SSP coefficients of explicit one-step methods 
	is, to a certain extent, equivalent to the problem of characterizing positive quadratures with integer nodes 
	with respect to Poisson-Charlier measures. Using this equivalence, we provide sharp upper and lower bounds for the optimal 
	linear SSP coefficients in terms of the zeros of generalized Laguerre orthogonal polynomials. This in particular provides us 
	with a sharp upper bound for the optimal SSP coefficients of explicit Runge-Kutta methods.
	Also based on this equivalence, we propose a highly efficient and stable algorithm for computing these coefficients, and 
	their associated optimal linear SSP methods, based on adaptive spectral transformations of Poisson-Charlier measures.
	The algorithm possesses the remarkable property that its complexity depends only on the order of the method  
	and thus is independent of the number of stages. 
	Our results are achieved by adapting and extending an ingenious technique by Bernstein in his seminal 
	work on absolutely monotonic functions \cite{bernstein}. Moreover, the techniques introduced in this 
	work can be adapted to solve the integer quadrature problem for any positive discrete multi-parametric 
	measure supported on $\mathbb{N}$ under some mild conditions on the zeros of the associated 
	orthogonal polynomials. 
	\keywords{optimal threshold factors \and strong stability preserving schemes \and Poisson-Charlier polynomials \and generalized Laguerre polynomials \and spectral transformations \and Gaussian quadrature rules \and absolutely monotonic fuctions \and Runge-Kutta methods}
	\subclass{MSC 65M12 \and 65N12 \and 65L06 \and  65D32}  
\end{abstract}  

\section{Introduction}
\label{intro}
Many explicit numerical schemes for solving initial value problems, when applied to  
a linear system of $s\geq1$ ordinary differential equations
\begin{equation}
\label{eq:InitialProblem}
\frac{d}{dt} U(t) = A U(t), \quad t \geq 0, \quad U(0)=u_{0},
\end{equation}
where $A$ is a real $s\times s$ matrix and $u_{0} \in \mathbb{R}^s$,
reduce to a scheme of the type 
\begin{equation}
\label{eq:NumericalScheme}
u_{k}= \phi(hA) u_{k-1}, \quad k=1,2,3,\ldots,
\end{equation}
where $h>0$ is the step-size, $u_{k}$ is an approximation to $U(kh)$, and $\phi$ 
is a polynomial with real coefficients which satisfies 
\begin{equation}
\label{eq:Order}
\phi(x)= \exp(x) + \mathcal{O}(x^{n+1}) 
\quad \textnormal{when} \quad
x \rightarrow 0
\end{equation}
for an integer $n \geq 1$. The greatest integer $n$ for which (\ref{eq:Order}) holds 
is a measure for the local accuracy of the numerical scheme (\ref{eq:NumericalScheme}).   

The matrix $A$ in (\ref{eq:InitialProblem}) and the polynomial $\phi$ in (\ref{eq:NumericalScheme}) 
being given, it is natural to ask for the maximally allowable step-size $h$ at which the numerical 
scheme (\ref{eq:NumericalScheme}) preserves a given property of the exact solution 
to (\ref{eq:InitialProblem}). Prior to giving two prominent examples illustrating such situations, we recall
a few definitions. A $C^{\infty}$ function $f$ is said to be {\it{absolutely monotonic}} over an interval 
$[a,b]$ if, for any $x \in [a,b]$ and for any non-negative integer $k$, $f^{(k)}(x) \geq 0$. 
Denote by $\Pi_{m,n}$, with $m \geq n$, the set of polynomials $\phi$ of degree $m$ ($m \geq 1$) satisfying 
condition (\ref{eq:Order}). The {\it{threshold factor}} or the {\it{linear strong stability preserving (SSP) coefficient}}, 
$R(\phi)$, of a polynomial $\phi$ in $\Pi_{m,n}$ is defined as
\begin{equation*}
R(\phi) = \sup \{ r \; | \; r=0  \; \textnormal{or} \; ( r >0 \;  \textnormal{and} \; \phi \; \textnormal{is absolutely monotonic over }  [-r,0]) \}.
\end{equation*}
Now, let us assume that the matrix $A$ in (\ref{eq:NumericalScheme}) preserves positivity, 
\ie for every initial value $u_{0} \in \mathbb{R}^s$ such that $u_{0} \geq 0$ 
we have $U(t) \geq 0$ for $t \geq 0$. Here and everywhere else the inequalities should be 
interpreted component-wise. It is well known that the matrix $A$ preserves positivity if and only 
if it is a Metzler matrix,\ie the off diagonal elements of $A$ are non-negative \cite{bolley}. 
Moreover, it is shown in \cite{bolley} that, given a Metzler matrix $A$, if the step-size $h$ in 
(\ref{eq:NumericalScheme}) satisfies 
\begin{equation}
\label{positivity}
h \leq \frac{R(\phi)}{\alpha}
\quad \textnormal{with} \quad 
\alpha =  \max_{a_{ii}\leq 0} |a_{ii}|,
\end{equation}
then the numerical scheme (\ref{eq:NumericalScheme}) preserves positivity in the sense that for any 
initial value $u_0 \geq 0$, we have $u_{k} \geq 0$ for any $k\geq 1$. Moreover, the quantity 
$R(\phi)/\alpha$ is the supremum of all the step-sizes that preserve positivity for any Metzler matrix with 
diagonal elements satisfying $a_{ii} \geq -\alpha$.      

Another example where the linear SSP coefficient $R(\phi)$ appears naturally 
is when the matrix $A$ is dissipative with respect to a given norm $|.|$ in 
$\mathbb{R}^s$ \ie for any initial value $u_0 \in \mathbb{R}^s$, the solution to 
(\ref{eq:InitialProblem}) satisfies $|U(t)| \leq |u_0|$ for any $t \geq 0$. It is well known 
that the set of dissipative matrices coincides with the set of matrices satisfying the so-called 
{\it{circle condition}},\ie there exists a positive real number $\beta$ such that 
$|| A + \beta I|| \leq \beta$ where $||.||$ stands for the matrix norm induced by $|.|$ 
and $I$ stands for the identity matrix \cite{spijker}. It is shown in \cite{spijker} 
that if the step-size $h$ in (\ref{eq:NumericalScheme}) satisfies 
\begin{equation}
\label{contractivity}
h \leq \frac{R(\phi)}{\beta}, 
\end{equation}
then the numerical scheme (\ref{eq:NumericalScheme}) preserves contractivity in the sense that, for any 
initial value $u_0$, we have $|u_{k}| \leq |u_{0}|$ for any $k\geq 1$.  Moreover, the quantity $R(\phi)/\beta$ 
is the supremum of all the step-sizes that preserve contractivity for any matrix satisfying the circle 
condition $|| A + \beta I|| \leq \beta$.     

In many practical situations, it is essential to have some flexibility in the choice of the step-size $h$ 
while ensuring the preservation of specific properties of the exact solution. In this respect, 
conditions (\ref{positivity}) and (\ref{contractivity}) suggest to take in (\ref{eq:NumericalScheme}) 
the polynomial $\phi$ that maximizes the value of $R(\phi)$. 
This motivates the introduction of the {\it{optimal threshold factor}} or the {\it{optimal linear SSP coefficient}}, $R_{m,n}$, defined as 
\begin{equation}
\label{ThresholdDefinition}
R_{m,n} =  \sup \{ R(\phi) \;|\; \phi \in \Pi_{m,n} \}.
\end{equation}     
In \cite{kraaPoly} Kraaijevanger showed that $0 < R_{m,n} \leq m-n-1$ and that there exists 
a unique polynomial $\Phi_{m,n}$ in $\Pi_{m,n}$, called the {\it{optimal threshold polynomial}} or the 
{\it{optimal linear SSP polynomial}}, such that 
\begin{equation*}
R(\Phi_{m,n}) = R_{m,n}.
\end{equation*}
Given an $m$-stage Runge-Kutta method with coefficients $(M,b)$ with $M$ an $(m\times m)$ matrix 
and $b$ an $(m\times 1)$ vector and define the $(m+1,m+1)$ matrix  
\begin{equation*}
K = K(M,b) :=  \begin{pmatrix}
M &  0 \\
b^{T} & 0
\end{pmatrix}.
\end{equation*}   
The {\it{strong stability preserving (SSP)} coefficient}, $R(M,b)$, of the Runge-Kutta method is defined by 
{\fontsize{9}{12} 
	\begin{equation*}
	R(M,b) :=  \sup \{ r \; | \; \forall \rho \in [0,r], \; (I + \rho K)^{-1} \; \textnormal{exists}, \; 
	\rho K(I + \rho K)^{-1} \geq 0 \; \; \textnormal{and} \; \; \rho K(I + \rho K)^{-1} e \leq e \},  
	\end{equation*}}
where $e = (1,1,\ldots,1) \in \mathbb{R}^{m+1}$ and vectors and matrix inequalities are understood
component-wise. The SSP coefficient $R(M,b)$ plays the same role in numerical positivity and contractivity 
preservation for non-linear problems as $R_{m,n}$ does for linear problems. In particular, 
for explicit $m$-stages Runge-Kutta methods of order $n$ with $m \geq n$, we have 
\begin{equation}
\label{RKversus}
R(M,b) \leq R_{m,n}.
\end{equation}
Various studies have investigated optimal SSP methods for one-step, multi-stages 
methods \cite{kraaPoly,KraaRatio,ketcheson} and one stage, multi-step methods \cite{lenferink,lenferink2,ketcheson2}.
Systems of the type (\ref{eq:InitialProblem}) appear in semi-discretization, 
discontinuous Galerkin semi-discretization or spectral semi-discretization of partial differential equations
\cite{chen,gottlieb2,lu,gottlieb3,gottlieb4}. Optimally contractive schemes for solving these systems are 
important in so far as they prevent the growth of propagated errors.     

In the present paper we study the size of the optimal linear SSP coefficients and their associated methods for
multi-stages, one-step methods. The case of multi-step methods will be the subject of another paper. 
To present our main results, we first recall 
that the generalized Laguerre polynomials 
$L^{(\gamma)}_{n}$ are orthogonal on the interval $[0,\infty)$ with respect to the weight 
$x^{\gamma} e^{-x}$, that is,
\begin{equation}
\label{LaguerreIntegral}
\int_{0}^{\infty} L_{n}^{(\gamma)}(x) L_{m}^{(\gamma)}(x)  
x^{\gamma} e^{-x} dx = 0, \quad \textnormal{if} \quad n \not= m.  
\end{equation}
The integral in (\ref{LaguerreIntegral}) converges only if $\gamma>-1$. The zeros of generalized Laguerre polynomials 
are positive real numbers and {\bf{throughout this work we will denote by $\ell_{n}^{(\gamma)}$ the smallest zero of the 
generalized Laguerre polynomial $L_{n}^{(\gamma)}$}}. Let us also recall that Poisson-Charlier polynomials $C_{n}(.,R)$ are orthogonal 
polynomials with respect to the discrete Poisson-Charlier measure $\mu_{R}$ given by 
\begin{equation}
\label{CharlierPoissonMeasure}
\mu_{R} = e^{-R} \sum_{j=0}^{\infty} \frac{R^j}{j!} \delta_{j},
\end{equation}
where $\delta_{j}$ is the Dirac measure. In Section 2 and Section 3 we re-visit the work of Kraaijevanger \cite{kraaPoly}
with a new formalism that fits best our narrative. In Section 4 we establish a connection between the task of 
computing the optimal threshold factors and the notion of positive quadratures with integer nodes with respect 
to Poisson-Charlier measures. This connection leads to our first main result.   
\begin{theorem}
\label{MainTheorem1}
For any positive integers $m$ and $p$ such that $m \geq 2p-1$ 
\begin{equation}
\label{UpperBound}
R_{m,2p-1} \leq \ell_{p}^{(m-p)},  
\end{equation}
with equality if and only if the zeros of the Poisson-Charlier polynomial 
$C_{p}(.,\ell_{p}^{(m-p)})$ are integers.
\end{theorem}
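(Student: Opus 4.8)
The plan is to exploit the announced equivalence between optimal threshold factors and positive integer-node quadratures for Poisson–Charlier measures, together with the classical Gauss-quadrature theory for the discrete measure $\mu_R$. First I would recall from Sections 2--4 the reformulation of $R_{m,n}$ in the Poisson–Charlier language: a polynomial $\phi\in\Pi_{m,n}$ that is absolutely monotonic on $[-R,0]$ corresponds, after the substitution that turns absolute monotonicity into positivity of a linear functional, to a positive quadrature rule with nodes in $\mathbb{N}$ that is exact on a space of dimension governed by the order $n=2p-1$, integrating against $\mu_R$. Concretely, for $n=2p-1$ the natural object is a $p$-point quadrature exact on polynomials of degree $2p-1$, i.e. a Gauss-type rule for $\mu_R$, and such a rule exists with \emph{nonnegative} weights for every $R$, but the constraint that forces $R$ to be admissible is that the nodes be \emph{integers}, reflecting the discrete support $\mathbb{N}$ of $\mu_R$.

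Next I would make the bridge to Laguerre zeros. The key identity is the known representation of Poisson–Charlier polynomials: $C_p(x,R)$ is, up to normalization, $L_p^{(x-p)}(R)$ when $x$ is an integer $\ge p$ — equivalently the Charlier polynomial evaluated at integer argument coincides with a generalized Laguerre polynomial in the parameter $R$. Hence the statement ``the smallest node of the $p$-point Gauss rule for $\mu_R$ reaches the value $m-p$'' translates into ``$R$ is a zero of $L_p^{(m-p)}$,'' and the \emph{largest} admissible $R$ corresponds to the \emph{smallest} such zero, namely $\ell_p^{(m-p)}$. This gives the upper bound $R_{m,2p-1}\le \ell_p^{(m-p)}$: if $R$ exceeded $\ell_p^{(m-p)}$, the smallest Gauss node would be pushed below the value $m-p$, and the degree/positivity constraints encoded in $m\ge 2p-1$ (together with Kraaijevanger's bound $R_{m,n}\le m-n-1$ recalled in the introduction) would be violated — monotonicity of Gauss nodes in the mass parameter $R$ is the analytic fact doing the work here.

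For the equality characterization I would argue both directions. If the zeros of $C_p(\,\cdot\,,\ell_p^{(m-p)})$ are integers, then the $p$-point Gauss rule for $\mu_{\ell_p^{(m-p)}}$ has integer nodes and positive weights, so it is an admissible integer quadrature at the extreme value $R=\ell_p^{(m-p)}$; by the Section 4 equivalence this yields an admissible $\phi\in\Pi_{m,2p-1}$ with $R(\phi)=\ell_p^{(m-p)}$, forcing $R_{m,2p-1}=\ell_p^{(m-p)}$. Conversely, if equality holds, then the optimal threshold polynomial $\Phi_{m,2p-1}$ realizes $R=\ell_p^{(m-p)}$, and running the equivalence backwards produces a positive quadrature for $\mu_{\ell_p^{(m-p)}}$ with integer nodes; a dimension/exactness count shows this rule must be the $p$-point Gauss rule, whose nodes are precisely the zeros of $C_p(\,\cdot\,,\ell_p^{(m-p)})$, which are therefore integers. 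The main obstacle I anticipate is the second direction: one must rule out that the optimal $\phi$ at $R=\ell_p^{(m-p)}$ corresponds to a quadrature with \emph{more} than $p$ nodes or with a node of higher multiplicity that is not the Gaussian one — this requires carefully tracking the degree bookkeeping ($m$, the order $2p-1$, the factor $m-n-1$) and invoking the uniqueness of the Gauss rule of a given size, i.e. a rigidity argument showing the extremal configuration is forced to be Gaussian.
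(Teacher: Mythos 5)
Your proposal captures the right high-level ingredients (the integer-quadrature reformulation, the Charlier--Laguerre identity $C_p(x,R)=p!\,L_p^{(x-p)}(R)$, and monotonicity of zeros in $R$), but the mechanism you give for why $R>\ell_p^{(m-p)}$ is forbidden is both misidentified and left unproved. You write that exceeding $\ell_p^{(m-p)}$ would push ``the smallest Gauss node below $m-p$'' and that this, together with $m\ge 2p-1$ and Kraaijevanger's bound $R_{m,n}\le m-n-1$, yields a contradiction. That is not what is going on. The relevant extreme is the \emph{largest} Gauss node: $\ell_p^{(m-p)}$ is precisely the smallest $R$ with $C_p(m,R)=0$, i.e.\ the smallest $R$ for which the largest zero $\lambda_{p,p}(R)$ of $C_p(\cdot,R)$ equals $m$; for $R>\ell_p^{(m-p)}$, monotonicity gives $\lambda_{p,p}(R)>m$. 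The bound $R_{m,n}\le m-n-1$ plays no role.

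The missing engine is the comparison theorem between an arbitrary positive integer-node quadrature and the Gauss rule (Proposition \ref{last-term} in the paper): if $\mathcal{H}_{2p-1}(x;R)=\sum_{k=1}^s \beta_k(x-\rho_k)^{2p-1}$ with $\beta_k>0$ and $s\ge p$, then the largest node $\rho_s$ must satisfy $\rho_s\ge\lambda_{p,p}(R)$, with equality exactly when $s=p$ and the rule is the $p$-point Gauss rule. This is proved in the paper by evaluating the polar form of $\mathcal{H}_{2p-1}(\cdot;R)$ at the doubled Gauss nodes, and it is the step that simultaneously delivers the strict inequality (if $R>\ell_p^{(m-p)}$ then some node exceeds $m$) and the rigidity needed for the equality characterization (equality forces the integer quadrature to coincide with the $p$-point Gauss rule, hence the Gauss nodes are integers). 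You correctly flag this rigidity as the main obstacle in your last paragraph, but you do not supply an argument for it -- ``a dimension/exactness count'' does not by itself rule out positive rules with $s>p$ nodes. Without this comparison lemma the proof does not close.
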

Table \ref{tab:table1} shows some of the exact values of the optimal threshold factors $R_{m,n}$ 
(computed using the algorithm described in Section 8) and the
upper bound obtained in Theorem \ref{MainTheorem1}. The quality of the upper bound (\ref{UpperBound}) is 
rather remarkable and surprising. Moreover, according to Theorem \ref{MainTheorem1} and (\ref{RKversus}), 
we have the following. 
\begin{corollary}
The SSP coefficient of an explicit $m$-stage Runge-Kutta method of order $2p-1$ ($m \geq 2p-1$) satisfies 
\begin{equation}
\label{RKbound}
R(M,b) \leq  \ell_{p}^{(m-p)}.
\end{equation}
\end{corollary}
The use of the upper bound (\ref{RKbound}) has to take into account the order barrier for explicit SSP Runge-Kutta methods \cite{mainKra,ketcheson3}.

From now on, we shall call the optimal SSP coefficients and associated polynomials simply by optimal threshold factors and polynomials, respectively.   

An attempt at finding an equally satisfying lower bound for the optimal threshold
factors as in Theorem \ref{MainTheorem1} is the object of Section 5. Using duality concepts, 
we prove the following result.
\begin{theorem}
	\label{MainTheorem2}
	For any positive integers $m$ and $p$ such that $m \geq 2p-1$ 
	\begin{equation}
	\label{LowerBound}
	R_{m,2p-1} \geq \ell_{p}^{(m-2p+1)}.
	\end{equation}
\end{theorem}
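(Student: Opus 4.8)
The plan is to translate the inequality, through the correspondence developed in Section~4, into a statement about positive quadratures with integer nodes for Poisson-Charlier measures, and then to settle that statement by a Farkas-type duality argument. Recall that a polynomial $\phi$ of degree $m$ is absolutely monotonic on $[-r,0]$ if and only if all of its Taylor coefficients about $-r$ are nonnegative; after the substitution $v=x+r$ and a rescaling of the $k$-th coefficient by $r^{k}$, the accuracy requirement $\phi(x)=\exp(x)+\mathcal{O}(x^{2p})$ turns into the statement that a nonnegative measure $\omega=\sum_{k=0}^{m}b_{k}\delta_{k}$ supported on $\{0,1,\ldots,m\}$ shares with the Poisson-Charlier measure $\mu_{r}$ all moments up to order $2p-1$. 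Hence $R_{m,2p-1}\geq r$ as soon as $\mu_{r}$ admits a positive quadrature rule, exact on polynomials of degree at most $2p-1$, whose nodes are integers lying in $\{0,1,\ldots,m\}$. By Farkas' lemma --- the duality principle alluded to in this section --- such a rule exists if and only if
\[
\deg\pi\leq 2p-1,\qquad \pi(j)\geq 0\ \ (j=0,\ldots,m)\qquad\Longrightarrow\qquad \int\pi\,d\mu_{r}\geq 0 ,
\]
so it is enough to verify this implication at $r=\ell_{p}^{(m-2p+1)}$.

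Next I would invoke the classical identity linking the two orthogonal families, $C_{p}(x;r)=\frac{p!}{(-r)^{p}}\,L_{p}^{(x-p)}(r)$. Since $r=\ell_{p}^{(m-2p+1)}$ is by definition the smallest zero of $L_{p}^{(m-2p+1)}$, the integer $x=m-p+1$ is a zero of the Charlier polynomial $C_{p}(\cdot\,;r)$, and --- because $r$ is the \emph{smallest} such Laguerre zero --- in fact its largest zero. Thus the $p$-point Gaussian quadrature for $\mu_{r}$ (strictly positive weights, exact on polynomials of degree up to $2p-1$) has all its nodes $\theta_{1}<\cdots<\theta_{p}$ in $[0,m-p+1]\subseteq[0,m]$, the largest being the integer $m-p+1$. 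It is precisely because $\theta_{1},\ldots,\theta_{p-1}$ are in general not integers that the Gaussian rule cannot be used verbatim; this is what makes the lower bound nontrivial, and it explains why the extremal constant is a Laguerre zero.

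For the final step I would reduce the displayed implication to the extreme rays of the polyhedral cone $\{\pi:\deg\pi\leq 2p-1,\ \pi(j)\geq 0\ \text{for}\ j=0,\ldots,m\}$; these are exactly the polynomials $\pi=c\prod_{\ell=1}^{2p-1}(x-j_{\ell})$ whose $2p-1$ simple zeros form a subset of $\{0,\ldots,m\}$ compatible with a single choice of sign for $c$. If $c>0$ then $\pi(j)\geq 0$ for \emph{every} nonnegative integer $j$, and $\int\pi\,d\mu_{r}=\sum_{j\geq 0}\mu_{r}(\{j\})\,\pi(j)\geq 0$ is immediate. The substantive case is $c<0$: then $\pi$ must carry a block of the largest integers among its roots, so $\pi(j)<0$ for every integer $j>m$, and one has to show that the nonnegative contribution of $\{0,\ldots,m\}$ outweighs the negative Poisson tail beyond $m$. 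Writing $\int\pi\,d\mu_{r}=\sum_{i}w_{i}\,\pi(\theta_{i})$ through the Gaussian rule, using that every $\theta_{i}$ lies at or below $m-p+1$, and tracking the interlacing between the Charlier zeros $\theta_{i}$ and the roots $j_{\ell}$ --- whose locations are pinned down by the identity above --- one is driven to $\sum_{i}w_{i}\,\pi(\theta_{i})\geq 0$ for every admissible configuration of roots. I expect this sign bookkeeping to be the main obstacle: it is here that Bernstein's device \cite{bernstein} is adapted and pushed further, and one must prove that $r=\ell_{p}^{(m-2p+1)}$ is \emph{exactly} the value below which the tail can never dominate --- no soft compactness argument will deliver the sharp constant.
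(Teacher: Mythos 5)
Your setup is in line with the paper's: you correctly deploy the Farkas-type characterization (Theorem~\ref{FarkasLemma}) of $R_{m,n}\geq R$, and you correctly observe via the Charlier--Laguerre relation that at $r=\ell_{p}^{(m-2p+1)}$ the largest zero of $C_{p}(\cdot,r)$ is exactly the integer $m-p+1$, so the $p$-point Gaussian nodes all lie in $[0,m-p+1]\subseteq[0,m]$. The difficulty you flag --- that the interior Gaussian nodes are not integers, so $\pi$ can be negative there --- is also the real difficulty.

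However, your proposed resolution of that difficulty does not work and is materially different from what the paper actually needs. You want to write $\int\pi\,d\mu_{r}=\sum_{i}w_{i}\pi(\theta_{i})$ directly and argue by ``interlacing'' and ``sign bookkeeping'' that the sum is nonnegative. But there is no reason for $\sum_i w_i\pi(\theta_i)$ to be termwise controllable: $\pi$ is nonnegative only on the integers $\{0,\ldots,m\}$, and $\pi(\theta_i)$ will generically be negative for the irrational nodes. The paper avoids this completely differently. It factors $f=f_1f_2$ where $f_1$ (degree $2s$) carries all zeros of $f$ in $[0,m]$ and is admissible (nonnegative on all of $\mathbb Z$), while $f_2$ (degree $2(p-s)-1$) has no zeros in $[0,m]$ and hence is strictly positive there. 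It then performs a Christoffel transform $\tilde\mu_{\bar R}$ by the weight $f_1$ and applies the $(p-s)$-point Gaussian rule for $\tilde\mu_{\bar R}$ to $f_2$: the integral equals $\sum_j\beta_j f_2(\tilde\lambda_j)$ with $\beta_j>0$. The whole argument then reduces to proving $\tilde\lambda_{p-s}\leq m$, which the paper achieves by introducing the quantity $\tau_p$ (equation~\eqref{TauDefinition}) and showing $\tau_p\leq p-1$. That bound in turn rests on two ingredients you do not mention and that are not ``Bernstein's device'': the monotonicity Proposition~\ref{Value1} (zeros of the shifted Christoffel-transformed orthogonal polynomial move by at most $1$), and the residue-calculus positivity lemma of Proposition~\ref{Conjecture} ($\sum_{k=0}^{n}\binom{n}{k}^2 P(t+k)\geq0$ for admissible $P$ of degree $\leq 2n$). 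Without the factorization $f=f_1f_2$, the Christoffel transform, the quantity $\tau_p$, and Nazarov's positivity lemma, the argument does not close. Also, your characterization of the extreme rays of $\{\pi:\deg\pi\leq 2p-1,\ \pi|_{\{0,\ldots,m\}}\geq0\}$ as simple products of $2p-1$ distinct integer linear factors is not correct in general (double zeros and zeros outside $\{0,\ldots,m\}$ appear), but this is a secondary issue compared to the missing core argument.
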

Although the lower bound given in (\ref{LowerBound}) is sharp (we have equality in (\ref{LowerBound}) when $p=1$), 
it is not as impressive as the upper bound obtained in (\ref{UpperBound}) (see Table \ref{tab:table1}). 
Nevertheless,  we give strong evidences of possible improvements of the lower bound in (\ref{LowerBound}). More precisely, we show that 
\begin{equation*}
R_{m,2p-1} \geq \ell_{p}^{(m-p-\tau_{p})},
\end{equation*}
where 
\begin{equation*}
\tau_{p} := \sup_{R>0} \sup_{P\in \mathcal{C}_{p}}
\left( \frac{\int t P(t) d\mu_{R}(t)}{\int P(t) d\mu_{R}(t)} - \lambda_{p,p}(R) \right),
\end{equation*}
where $\lambda_{p,p}(R)$ is the largest zero of the Poisson-Charlier polynomial $C_p(.,R)$ and 
$\mathcal{C}_{p}$ is the set of non-zero real polynomials of degree at most $2p-2$ that are non-negative on
$\mathbb{Z}$. The lower bound (\ref{LowerBound}) is obtained by showing that
$\tau_{p} \leq p-1$ using Cauchy residue theorem. 
\begin{table}[h!]
	\label{tab:table1}
	\centering
	\begin{tabular}{|c|c|c|} \hline
		$\ell_{p}^{(m-2p+1)}$ & $R_{m,2p-1}$ & $\ell_{p}^{(m-p)}$ \\ \hline
		11.0108  & $R_{20,5}$=12.5512    & 12.6118\\
		19.1884  & $R_{30,5}$=20.8355    & 20.8659\\
		9.7026   & $R_{22,7}$=11.8435    & 11.9237\\
		23.6589  & $R_{40,7}$=26.0713    & 26.0927\\
		44.4670  & $R_{65,7}$=47.0065    & 47.0267\\
		3.6304   & $R_{16,9}$=5.9337     & 6.0762\\
		41.7638  & $R_{67,9}$ =45.0148   & 45.0533\\
		10.5254  & $R_{30,11}$=13.8617   & 13.9257\\
		17.4568  & $R_{40,11}$=21.0411   & 21.0911\\ \hline
	\end{tabular}
	\smallskip
	\caption{Upper and lower bounds for the optimal threshold factor $R_{m,2p-1}$.}
\end{table}

In \cite{kraaPoly} it is shown that the $R_{m,n}$-table of the optimal threshold 
factors enjoys a remarkable property of stabilization along the diagonal,\ie for any non-negative $d$,
there exists an integer $p=p(d)$ such that 
\begin{equation}
\label{diagonalProp}
R_{p+d+k,p+k} = R_{p+d,p} \quad \textnormal{for all} \quad k \geq 1.
\end{equation}
In Section 6, we adapt and extend  an ingenious technique by Bernstein in his seminal work \cite{bernstein}
to identify a structural property of the optimal threshold polynomials. This leads to 
the following surprising property of the optimal threshold factors that in some sense complements the diagonal
stability property (\ref{diagonalProp}).
\begin{theorem}
	\label{MainTheorem3}	
	The optimal threshold factors $R_{m,n}$ ($m\geq n \geq 1$) are algebraic numbers such that  
	\begin{equation}
	\label{smalldiagonal1}
	R_{m+1,2p} = R_{m,2p-1},
	\end{equation}
	for any positive integers $m$ and $p$ such that $m \geq 2p-1$. Moreover, the associated optimal 
	threshold polynomials satisfy the relation
	\begin{equation}
	\label{OptimalRelation}
	\Phi_{m+1,2p}(x) = 1 + \int_{0}^{x} \Phi_{m,2p-1}(\xi) d\xi.  
	\end{equation}
\end{theorem}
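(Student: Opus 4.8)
The plan is to exploit the antiderivative operation $\mathcal{I}[P](x) := 1 + \int_0^x P(\xi)\,d\xi$ as a bijection between $\Pi_{m,2p-1}$ and a distinguished subset of $\Pi_{m+1,2p}$, and to show it preserves and reflects the threshold factor. First I would observe the elementary fact that if $P \in \Pi_{m,2p-1}$, then $Q := \mathcal{I}[P]$ has degree $m+1$, satisfies $Q(0)=1$, $Q'(x)=P(x) = \exp(x)+\mathcal{O}(x^{2p})$, hence $Q(x) = \exp(x) + \mathcal{O}(x^{2p+1})$, so $Q \in \Pi_{m+1,2p}$; moreover $R(Q) = R(P)$, because $Q^{(k)} = P^{(k-1)}$ for $k \geq 1$ and $Q(x) \geq Q(0) + \int$ of a nonnegative function $\geq 1 > 0$ on $[-r,0]$ whenever $P$ is absolutely monotonic on $[-r,0]$ (and conversely the absolute monotonicity of $Q$ on $[-r,0]$ forces that of $P=Q'$ there). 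Taking suprema gives immediately $R_{m+1,2p} \geq R_{m,2p-1}$, and in particular $\mathcal{I}[\Phi_{m,2p-1}]$ is a legitimate competitor in the definition of $R_{m+1,2p}$.

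The reverse inequality $R_{m+1,2p} \leq R_{m,2p-1}$ is the crux, and this is where I expect the structural input from Bernstein's technique (Section 6, which I may invoke) to be essential. The idea is that the optimal threshold polynomial $\Phi_{m+1,2p}$, being optimal for an \emph{even} order $n=2p$, must itself arise as an antiderivative: one shows $\Phi_{m+1,2p}'$ lies in $\Pi_{m,2p-1}$ (which is automatic from the order condition as above) and that $\Phi_{m+1,2p}' $ is absolutely monotonic precisely on $[-R_{m+1,2p},0]$, so that $R(\Phi_{m+1,2p}') \geq R_{m+1,2p}$, whence $R_{m,2p-1} \geq R(\Phi_{m+1,2p}') \geq R_{m+1,2p}$. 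The subtle point is that differentiating a polynomial absolutely monotonic on $[-r,0]$ trivially keeps it absolutely monotonic on $[-r,0]$ — that direction is free — so the real content is that no polynomial in $\Pi_{m+1,2p}$ can do strictly better than the span of antiderivatives, i.e. that $\Phi_{m+1,2p}(0)=1$ together with the even-order constraint forces the extremal polynomial into the image of $\mathcal{I}$. This should follow from the characterization of the optimal polynomial via its ``boundary'' contact pattern (the optimal $P$ touches the constraints $P^{(k)}(-R)=0$ for a maximal set of $k$), combined with the parity bookkeeping that distinguishes the order $2p-1$ from $2p$: the extra order condition at $0$ is exactly one scalar condition, matched by the one extra degree of freedom $m \mapsto m+1$, and the antiderivative is the canonical way to consume both.

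Once the equality of threshold factors is in hand, both the relation \eqref{OptimalRelation} and the algebraicity follow. For \eqref{OptimalRelation}: $\mathcal{I}[\Phi_{m,2p-1}]$ belongs to $\Pi_{m+1,2p}$ with $R(\mathcal{I}[\Phi_{m,2p-1}]) = R_{m,2p-1} = R_{m+1,2p}$, so by the uniqueness of the optimal threshold polynomial (Kraaijevanger \cite{kraaPoly}, recalled in the introduction) it must equal $\Phi_{m+1,2p}$. For algebraicity, I would argue by induction using \eqref{smalldiagonal1} to reduce to odd order, and for odd order $2p-1$ invoke the equivalence established in Section 4 with positive integer-node quadratures against Poisson-Charlier measures together with Theorem \ref{MainTheorem1}: $R_{m,2p-1}$ is pinned down as the solution of an algebraic system expressing that a generalized-Laguerre-type extremal configuration is attained, and in the worst case it is cut out by the vanishing of a polynomial with rational (indeed integer) coefficients obtained from the contact equations $\Phi^{(k)}(-R)=0$, whose coefficients are polynomial in $R$ over $\mathbb{Q}$ — hence $R_{m,2p-1}$ is algebraic, and then so is $R_{m+1,2p}$. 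The main obstacle, to reiterate, is proving that the optimal polynomial at even order is forced to be an antiderivative; everything else is bookkeeping around uniqueness and the monotone behaviour of $\mathcal{I}$ under differentiation and integration.
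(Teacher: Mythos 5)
Your overall strategy — passing between $\Pi_{m,2p-1}$ and $\Pi_{m+1,2p}$ via the antiderivative $\mathcal{I}[P](x) = 1 + \int_0^x P$ — is exactly the paper's, but you have the easy and hard directions swapped, and the reason you think one direction is free contains a sign error that hides the real obstacle. You claim $R(\mathcal{I}[P]) = R(P)$ for any $P \in \Pi_{m,2p-1}$, justifying $\mathcal{I}[P](x) \geq 0$ on $[-r,0]$ by ``$Q(x) \geq Q(0) + \int$ of a nonnegative function $\geq 1$.'' But for $x < 0$ the integral term is $\int_0^x P = -\int_x^0 P \leq 0$, so $\mathcal{I}[P](x) = 1 - \int_x^0 P(\xi)\,d\xi$, which can certainly drop below zero. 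What is free is only the one-sided inequality $R(\mathcal{I}[P]) \leq R(P)$: differentiation preserves absolute monotonicity, so if $\mathcal{I}[P]$ is a.m.\ on $[-r,0]$ then so is its derivative $P$. Applied to $\Phi_{m+1,2p}$ (whose derivative lies in $\Pi_{m,2p-1}$), this gives $R_{m,2p-1} \geq R(\Phi_{m+1,2p}') \geq R_{m+1,2p}$ with no structural input at all. That is the easy direction — the one you label ``the crux.''

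The genuine crux is the opposite inequality $R_{m+1,2p} \geq R_{m,2p-1}$, and it cannot be obtained by ``taking suprema'' over $\Pi_{m,2p-1}$: you must show that the one specific polynomial $\mathcal{I}[\Phi_{m,2p-1}]$ remains absolutely monotonic on $[-R_{m,2p-1},0]$, which comes down to a single nontrivial inequality: writing $\Phi_{m,2p-1}(x) = \sum_k \alpha_k(1+x/R)^{m_k}$, integrating yields $\mathcal{I}[\Phi_{m,2p-1}](x) = \beta_0 + \sum_k \frac{\alpha_k R}{m_k+1}(1+x/R)^{m_k+1}$ with $\beta_0 = 1 - R\sum_k \frac{\alpha_k}{m_k+1}$, and you need $\beta_0 \geq 0$. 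This is exactly where the structural theorem of Section 6 (Theorem \ref{fundamental-theorem3}/\ref{fundamental-corollary}) enters in the paper's proof: the pairing pattern $(q_j, q_j+1)$ of the exponents is what allows one to identify $\beta_0$ as $\int \bigl[(m_{2p-1}+2-t)\prod_k(m_k+1-t)\bigr]\,d\mu_{R_{m,2p-1}}(t)$ up to a positive factor and to check that the integrand is nonnegative on $\mathbb{N}$. Without the structural theorem there is no reason for $\beta_0 \geq 0$ to hold for an arbitrary element of $\Pi_{m,2p-1}$, and indeed it does not. So the correct repair is to invoke Section 6 for the $\geq$ direction (not the $\leq$ direction), after which the polynomial identity \eqref{OptimalRelation} follows from uniqueness exactly as you outline, and the algebraicity reduction via \eqref{smalldiagonal1} is fine.
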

Note that (\ref{smalldiagonal1}) and (\ref{OptimalRelation}) assert that it is enough to compute $R_{m,n}$ and $\Phi_{m,n}$ 
for odd integers $n$ to obtain the whole $R_{m,n}$-table of optimal threshold factors and their associated 
optimal threshold polynomials. This is an essential property that will prove extremely useful in this work. 
Namely that we shall find it more natural to study the optimal threshold factors $R_{m,n}$ with $n$ 
an odd integer than if $n$ is an even integer.  

The structural property of the optimal threshold polynomial asserts the following fundamental result proven in Section 6.
\begin{theorem}
	\label{fundamental-corollary}
	For any positive integers $m$ and $p$ such that $m \geq 2p-1$, 
	the optimal threshold polynomial $\Phi_{m,2p-1}$ has the form 
	\begin{equation}
	\label{introth}
	\Phi_{m,2p-1}(x) = \sum_{k=1}^{2p-1} \alpha_k \left(1 + \frac{x}{R_{m,2p-1}}\right)^{m_k},
	\end{equation}
	where $\alpha_k, k=1,2,\ldots,2p-1$ are non-negative real numbers with $\alpha_{2p-1} >0$
	and the integers $0\leq m_1<m_2<\ldots<m_{2p-1}\leq m$ satisfy
	\begin{equation}
	\label{introm}
	m_{2k} = m_{2k-1} +1, \quad k=1,2,\ldots,p-1 \quad \textnormal{and} \quad m_{2p-1} = m.
	\end{equation}
\end{theorem}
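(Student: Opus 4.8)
Write $\Phi:=\Phi_{m,2p-1}$ and $R:=R_{m,2p-1}$. The plan is to move to the integer-quadrature picture of Section~4 and then extract the stated form in three stages: a reformulation through absolute monotonicity, a node count based on the uniqueness of $\Phi$ and the maximality of $R$, and --- the heart of the matter --- the block structure (\ref{introm}), obtained via the Bernstein-type technique of Section~6.

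First I would reformulate. Since $R(\Phi)=R$, the polynomial $\Phi$ is absolutely monotonic on $[-R,0]$, and a polynomial has this property exactly when the coefficients of its Taylor expansion at $-R$ are non-negative; writing those coefficients as $a_j/R^j$ and setting $y=1+x/R$ gives
\begin{equation*}
\Phi(x)=\sum_{j=0}^{m}a_j\Bigl(1+\frac{x}{R}\Bigr)^{j},\qquad a_j\ge 0,\qquad a_m>0,
\end{equation*}
the last inequality because $\deg\Phi=m$ \cite{kraaPoly}. Setting $(t)_k:=t(t-1)\cdots(t-k+1)$, a short computation gives $\int(t)_k\,d\mu_R=R^{k}$, while $\frac{d^k}{dx^k}(1+x/R)^j\big|_{x=0}=R^{-k}(j)_k$; hence the order conditions $\Phi^{(k)}(0)=1$ for $0\le k\le 2p-1$ are equivalent to $\int(t)_k\,d\nu=\int(t)_k\,d\mu_R$ for $0\le k\le 2p-1$, where $\nu:=\sum_j a_j\delta_j$. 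Since $(t)_0,\dots,(t)_{2p-1}$ span the polynomials of degree at most $2p-1$, this says exactly that $\nu$ is a positive quadrature for $\mu_R$ with integer nodes in $\{0,1,\dots,m\}$, exact up to degree $2p-1$ --- the object of Section~4. In particular $\Phi$ already has the shape (\ref{introth}) with the exponents ranging over $\operatorname{supp}\nu$; the content of the theorem is that $\operatorname{supp}\nu$ has at most $2p-1$ points, contains $m$, and is covered by $\{m\}$ together with $p-1$ pairs of consecutive integers.

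Next I would count the nodes. If $\psi\in\Pi_{m,2p-1}$ is absolutely monotonic on $[-R,0]$ then $R(\psi)\ge R=R_{m,2p-1}$, hence $R(\psi)=R$, hence $\psi=\Phi$ by Kraaijevanger's uniqueness statement; equivalently, $(a_j)_j$ is the \emph{only} point of $\mathbb{R}^{m+1}_{\ge 0}$ satisfying $\sum_j b_j(j)_k=R^{k}$ for $0\le k\le 2p-1$. Because $R$ is moreover maximal, the pair $(a,R)$ is a distinguished point of the variety $\{(b,\rho)\in\mathbb{R}^{m+1}_{\ge 0}\times\mathbb{R}_{>0}:\sum_j b_j(j)_k=\rho^{k},\ 0\le k\le 2p-1\}$: it is the point where the coordinate $\rho$ attains its maximum. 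The $k=1$ equation merely reads $\rho=\sum_j jb_j$ and imposes no constraint beyond defining $\rho$; optimality of $\rho$ then supplies exactly one further scalar condition, so that a standard dimension count leaves at least $m+2-2p$ of the coordinates $a_j$ equal to $0$. Hence $\Phi$ has at most $2p-1$ positive coefficients, and since $a_m>0$ we may take $m_{2p-1}=m$ with $\alpha_{2p-1}>0$. This already gives (\ref{introth}); what remains is the block structure (\ref{introm}) of the remaining exponents --- and this is where the main difficulty lies.

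The obstacle is to show $\operatorname{supp}\nu\subseteq\{m\}\cup\bigcup_{k=1}^{p-1}\{m_{2k-1},m_{2k-1}+1\}$ for suitable integers $m_1<m_1+1<m_3<\cdots<m_{2p-3}+1<m$, that is, that the $\le 2p-2$ nodes of $\nu$ below $m$ fall into $p-1$ blocks of two consecutive integers (a block may carry only one non-zero weight). Here one cannot use merely that the point is unique; one must exploit that $R$ is \emph{maximal}. Adapting and extending Bernstein's device from \cite{bernstein}, I would read off from $\nu$ the pattern of vanishing of the Taylor coefficients of $\Phi$ at $-R$, and prove that any configuration of nodes violating the block structure admits an admissible perturbation --- for instance, splitting the mass carried by a node $a<m$ with $a+1\notin\operatorname{supp}\nu$ between $a$ and $a+1$ and re-solving for the other weights --- that preserves all $2p$ order conditions, keeps $\Phi$ absolutely monotonic on the entire interval $[-R,0]$, and yet strictly increases the mean $\sum_j jb_j=R$, contradicting the definition of $R_{m,2p-1}$. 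Heuristically, each ``Gauss node'' of the underlying continuous moment problem that lies strictly between two consecutive integers is, in the optimal integer realization, forced onto exactly that adjacent pair, which produces the $p-1$ consecutive pairs and the identities $m_{2k}=m_{2k-1}+1$. An alternative route would run this inductively, anchored on the relation $\Phi_{m+1,2p}(x)=1+\int_0^x\Phi_{m,2p-1}(\xi)\,d\xi$ of Theorem~\ref{MainTheorem3} --- which shifts the exponents by one and adjoins the exponent $0$ --- together with the location of the zeros of $C_p(\cdot,R)$ that governs the equality case of Theorem~\ref{MainTheorem1}. In either approach the genuine difficulty is to control \emph{simultaneously} (i)~all $2p$ order conditions, (ii)~non-negativity of every Taylor coefficient of $\Phi$ at $-R$, that is, absolute monotonicity over the whole of $[-R,0]$, (iii)~a strict increase of $R$, and (iv)~integrality of the nodes; it is precisely the interplay of (i)--(iv), handled by Bernstein's technique adapted to Poisson--Charlier measures, that forces the rigid pattern (\ref{introm}).
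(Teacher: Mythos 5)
The reformulation in your first stage is correct and matches the paper's viewpoint: the order conditions translate to $\int(t)_k\,d\nu=R^{k}$ for $0\le k\le 2p-1$, and the problem becomes a positive integer quadrature for $\mu_R$. Your second stage, the bound of at most $2p-1$ nodes, is essentially Kraaijevanger's Theorem~\ref{Kmaintheorem}, but the ``standard dimension count'' you invoke is not a proof: the set $\{b\ge 0:\sum_j b_j(j)_k=\rho^k,\ k\le 2p-1\}$ is a polytope of affine dimension $\ge m+1-2p$ when more than $2p$ weights are positive, and to conclude you would need the argument that a positive interior point of this polytope permits a small increase of $\rho$ with the system still solvable in non-negative weights --- exactly the perturbation the paper carries out. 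As written your sentence asserts the conclusion without establishing it.

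The real gap, as you yourself flag, is the block structure (\ref{introm}), and here your text gives a plan rather than a proof. The particular perturbation you sketch --- ``splitting the mass carried by a node $a<m$ with $a+1\notin\operatorname{supp}\nu$ between $a$ and $a+1$ and re-solving for the other weights'' while ``strictly increasing the mean $\sum_j jb_j=R$'' --- is inconsistent: once you re-solve for all weights to preserve the moment conditions $\sum_j b_j(j)_k=R^{k}$ for $k=0,\dots,2p-1$, the mean $\sum_j jb_j$ is forced to equal $R$ (it is the $k=1$ moment) and cannot strictly increase unless you have already changed $R$, which is precisely what needs justification. The paper's proof of Theorem~\ref{fundamental-theorem2} instead fixes $R$ and continuously deforms a single exponent $m_{2p}$, computes the signs of $\partial\alpha_k/\partial m_{2p}$ from a Vandermonde identity, and runs a controlled descending/ascending process on $m_{2p}$ to produce a second distinct representation of $\mathcal{H}_{2p-1}(\cdot;R_{m,2p-1})$ with non-negative coefficients, contradicting uniqueness of $\Phi_{m,2p-1}$; Theorem~\ref{fundamental-theorem3} then upgrades this to the precise form (\ref{introm}) with $m_{2p-1}=m$, $\alpha_{2p-1}>0$, using Proposition~\ref{NonVanishPair} and an integration trick that constructs $\mathcal{H}_{2p}$. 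None of these mechanisms appear in your argument. Finally, your proposed alternative route via Theorem~\ref{MainTheorem3} is circular: the paper's proof of $R_{m+1,2p}\ge R_{m,2p-1}$ and of (\ref{OptimalRelation}) explicitly invokes the second part of Theorem~\ref{fundamental-theorem3}, which is equivalent to the statement you are trying to prove.
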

Theorem \ref{fundamental-corollary} sates that in the representation (\ref{introth}) of the polynomial $\Phi_{m,2p-1}(x)$, 
the integers $(m_1,m_2,\ldots,m_{2p-2})$ come in pairs of consecutive integers and that $m_{2p-1} = m$. 
For example, using the algorithm described in Section 8, one can show that $R_{100,5} \simeq 83.002$ is the unique positive 
zero of the polynomial
\begin{equation*}
R^5 - 394R^4 + 62544R^3 - 5001012R^2 + 201456936R - 3271262400,
\end{equation*}
and the optimal threshold polynomial $\Phi_{100,5}$ is given by
\begin{equation*}
\Phi_{100,5} (x) =  \sum_{k=1}^{5}\alpha_k \left(1 + \frac{x}{R_{100,5}}\right)^{m_k},
\end{equation*}
where 
\begin{equation*}
(m_1,m_2,m_3,m_4,m_5) = (68,69,83,84,100),
\end{equation*}
and 
\begin{equation*}
 (\alpha_1,\alpha_2,\ldots,\alpha_5) = ( 0.1188,0.0765,0.2095,0.4539,0.1413).
\end{equation*}
The structural form (\ref{introth}) is further analyzed in Section 7 to reveal a set of rigid
rules on the allowable values of the integers $m_i, i=1,\ldots,2p-1,$ in the representation (\ref{introth}). Loosely stated,
we shall show a tight connection between the location of the integers $m_i, i=1,2,\ldots 2p-1$, the zeros of 
Poisson-Charlier polynomials and the zeros of the orthogonal polynomials associated with Christoffel transforms 
of Poisson-Charlier measures. This is achieved through a comprehensive study of specific spectral transformations 
of Charlier-Poisson measures. Our analysis leads to a highly efficient and stable algorithm for computing the optimal threshold factors and 
their associated optimal polynomials via adaptive spectral transformations of Poisson-Charlier measures.
{\bf{The algorithm has the particularity that its complexity depends only on the order of approximation 
and not of the degree of the polynomials}} and will be described in Section 8. To put into perspective 
the importance of the complexity of our algorithm, we compared the execution time of our algorithm with a recent algorithm 
in \cite{ketcheson} (within the same computational environment). The computation of $R_{2000,7}$ took 1 hour 30 minutes with
the algorithm in \cite{ketcheson}, while it took 1.2 seconds with ours. By increasing the degree of polynomials, we found that 
the algorithm in \cite{ketcheson} took about 4 hours 20 minutes for the computation of $R_{4000,7}$,
while it took 0.18 seconds with ours. We conclude with future work in Section 9.     
\section{Touchard Polynomials and optimal threshold factors}
\label{sec:2}
Denote by $(x)_{h}$ the Pochhammer symbol,\ie $(x)_{h}=x(x-1)\ldots(x-h+1)$ 
for $h \geq 1$ and $(x)_{0} =1$. The Stirling numbers, $s(n,k)$, of the first kind
and the Stirling numbers, $S(n,k)$, of the second kind are defined as the coefficients
in the expansions
\begin{equation}
\label{stirlings}
(x)_{n} = \sum_{k=0}^{n} s(n,k) x^{k};
\quad x^n = \sum_{k=0}^{n} S(n,k) (x)_k, \quad
n \geq 0; \; \textnormal{for any} \; 
x \in \mathbb{R}.
\end{equation}
The univariate Touchard\footnote{These polynomials are called
Stirling polynomials in \cite{kraaPoly}.} polynomials $B_{n}$ are defined by 
$B_{n}(x)=\sum_{k=0}^{n}S(n,k) x^{k}$ and satisfy the recurrence
\begin{equation}
\label{Bell_recurrence}
B_{0}(x) = 1, \quad B_{n+1}(x) = x \left(B_{n}(x) + B_{n}^{(1)}(x) \right),
\end{equation}
where the notation $F^{(k)}$ refers to the $k$-{th} derivative of the function $F$. 
The following useful relations hold:
\begin{equation}
\label{stirling-bell}
\sum_{k=0}^{n} s(n,k) B_{k}(x) = x^{n}; \quad
\sum_{k=0}^{n} S(n,k) x^{k} = B_{n}(x).
\end{equation}
For a fixed real number $R$, we define the polynomials $\mathcal{H}_n(.;R)$ by 
\begin{equation}
\label{hpolynomial}
\mathcal{H}_n(x;R) = \sum_{k=0}^{n} (-1)^{n-k} \binom{n}{k} B_{n-k}(R) x^{k}.
\end{equation}
The following result is implicit in \cite{kraaPoly}, however for the sake of completeness and also 
due to the difference between our presentation and the one in \cite{kraaPoly}, we provide a proof 
for the result. 
\begin{proposition}
	Let $m$ and $n$ be two positive integers such that $m \geq n$ and $R$ 
	be a positive real number. The polynomial  $\mathcal{H}_n(.;R)$ admits a 
	representation of the form 
	\begin{equation}
	\label{hsylvester}
	\mathcal{H}_n(x;R) = \sum_{i=1}^{s} \alpha_i (x-m_i)^{n}, 
	\quad s \geq 1,
	\end{equation}
	where $\alpha_1,\alpha_2,\ldots,\alpha_s$ are non-negative numbers and 
	where the integers $m_{1},m_{2},\ldots,m_{s}$ are such that  
	$0 \leq m_1 < m_2 < \ldots<m_s \leq m$ if and only if the polynomial 
	\begin{equation}
	\label{phi-polynomial}
	\Phi(x) = \sum_{i=1}^{s} \alpha_i \left(1 + \frac{x}{R} \right)^{m_i}
	\end{equation}
	is of degree at most $m$, is absolutely monotonic over the interval $[-R,0]$ and 
	it satisfies $\Phi(x) - e^{x} = \mathcal{O}(x^{n+1})$ as $x\rightarrow 0$.
\end{proposition}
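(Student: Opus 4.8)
The plan is to establish the equivalence by direct computation, passing between the two representations via the substitution $x \mapsto R(1 + x/R) = x + R$, i.e. recognizing that $\mathcal{H}_n(x;R)$ and $\Phi$ are linked by a shift-and-scale of the variable together with a normalization by powers of $R$. First I would verify the algebraic identity that connects the building blocks: I expect that $\mathcal{H}_n(x - m_i; R)$-type expressions, or rather the raw monomials $(x - m_i)^n$ appearing in \eqref{hsylvester}, correspond under the affine change of variable $y = 1 + x/R$ to $R^n(y - m_i/R)^n$, and more to the point that the \emph{coefficients} $(-1)^{n-k}\binom{n}{k} B_{n-k}(R)$ in the definition \eqref{hpolynomial} of $\mathcal{H}_n$ are precisely engineered so that $\mathcal{H}_n(x;R)$ is, up to the factor $n!/R^{?}$, the Taylor remainder data of $e^x$ evaluated against a $\Phi$ of the form \eqref{phi-polynomial}. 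Concretely, I would compute the Taylor coefficients of $\Phi(x) = \sum_i \alpha_i (1 + x/R)^{m_i}$ at $x = 0$: the $k$-th coefficient is $\tfrac{1}{k! R^k}\sum_i \alpha_i (m_i)_k$ where $(m_i)_k$ is the falling factorial, and then rewrite $(m_i)_k$ in terms of ordinary powers $m_i^j$ via Stirling numbers of the first kind \eqref{stirlings}. The order condition $\Phi(x) - e^x = \mathcal{O}(x^{n+1})$ says these coefficients match $1/k!$ for $k = 0, 1, \ldots, n$, which after clearing Stirling transforms becomes the condition $\sum_i \alpha_i m_i^{\,j} = $ (an explicit expression in $R$) for $j = 0, \ldots, n$ — and I expect that "explicit expression" to be exactly $B_j(R)$, the Touchard polynomial, using the relations \eqref{stirling-bell} and the known generating-function identity $\sum_j B_j(R) t^j/j! = \exp(R(e^t - 1))$.

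The second half of the argument handles absolute monotonicity. Given the representation \eqref{phi-polynomial} with $\alpha_i \geq 0$ and $0 \leq m_i$, every derivative $\Phi^{(k)}(x) = \sum_i \alpha_i R^{-k}(m_i)_k (1 + x/R)^{m_i - k}$ is manifestly a non-negative combination of non-negative quantities on $[-R, 0]$ (where $1 + x/R \in [0,1]$ and the exponents $m_i - k$ are either non-negative or the falling factorial $(m_i)_k$ vanishes), so absolute monotonicity over $[-R,0]$ is immediate from that side. Conversely — and this is the direction I expect to require the real work — I would need to show that if $\Phi$ of degree $\leq m$ is absolutely monotonic on $[-R,0]$ and matches $e^x$ to order $n$, then it \emph{must} admit the special form \eqref{phi-polynomial}. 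Here I would lean on the structural results the paper attributes to Kraaijevanger and presumably re-derives with the new formalism in Sections 2–3; the natural route is that absolute monotonicity on $[-R,0]$ forces, after the change of variable $y = 1 + x/R \in [0,1]$, that $\Phi$ becomes a polynomial in $y$ all of whose derivatives are non-negative on $[0,1]$, and by a classical Bernstein-type / Markov-type argument such a polynomial of degree $\leq m$ is a non-negative combination of the basis $\{y^{m_i}\}$ — but one must be careful, since not every absolutely monotonic polynomial on $[0,1]$ is a non-negative combination of pure monomials.

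I anticipate the main obstacle is precisely this last point: the equivalence as stated is really the statement that $\mathcal{H}_n(x;R)$ decomposes as a non-negative combination of \emph{shifted $n$-th powers} $(x - m_i)^n$ with integer shifts in $[0,m]$ exactly when the corresponding $\Phi$ is absolutely monotonic — so the heart of the matter is translating "absolutely monotonic on $[-R,0]$, degree $\leq m$" into "Sylvester-type representation as a positive sum of $n$-th powers of linear forms with integer nodes in $[0,m]$." I would handle this by the following bridge: a polynomial $\Phi$ of degree $\leq m$ on $[-R,0]$ with all derivatives non-negative is, by repeated integration from the left endpoint and the fact that $\Phi^{(m)}$ is a non-negative constant, expressible through the de Casteljau / Bernstein-basis representation $\Phi(x) = \sum_{i=0}^m b_i \binom{m}{i}(1 + x/R)^i (-x/R)^{m-i}$ with all $b_i \geq 0$; the order-$n$ tangency condition at $x=0$ then kills all but at most... — hmm, that gives the wrong basis. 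The cleaner bridge, and the one I would actually carry out, is to apply the operator that extracts the $n$-th finite-difference / divided-difference data: since $\deg \Phi \le m$, write $\Phi(x) = \sum_{j} c_j (1 + x/R)^j$ and note that $(1+x/R)^j = \sum_k \binom{j}{k}(x/R)^k$... — the reliable statement is that $\mathcal{H}_n(\cdot;R)$ is (a constant multiple of) the image of $\Phi$ under the linear functional machinery developed earlier, so that \eqref{hsylvester} for $\mathcal{H}_n$ and \eqref{phi-polynomial} for $\Phi$ are two encodings of the \emph{same} data $(\alpha_i, m_i)$, and the content of the proposition is just that the conditions "$\alpha_i \geq 0$, $m_i \in \{0,\ldots,m\}$ integers" are simultaneously the conditions for the $\mathcal{H}_n$-decomposition to be a valid positive power-sum representation and for $\Phi$ to lie in $\Pi_{m,n}$ with threshold factor $\geq R$. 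I would therefore organize the proof as: (i) exhibit the explicit bijective correspondence $(\alpha_i, m_i) \leftrightarrow (\alpha_i, m_i)$ between a putative \eqref{hsylvester} and a putative \eqref{phi-polynomial}, checking the order condition on both sides reduces to the same Touchard/Stirling identities; (ii) check that $\alpha_i \ge 0$ plus $0 \le m_i \le m$ on the $\mathcal{H}_n$ side forces $\deg \Phi \le m$ and absolute monotonicity of $\Phi$ on $[-R,0]$, and conversely; (iii) invoke the uniqueness of such representations from Sections 2–3 to close the loop. The genuinely delicate estimate will be confirming that the coefficient identities collapse to $B_{n-k}(R)$ exactly — i.e. that the combinatorics of Stirling numbers of the first and second kind in \eqref{stirlings}–\eqref{stirling-bell} interlock with the binomial weights in \eqref{hpolynomial} — but this is bookkeeping with generating functions rather than a conceptual hurdle.
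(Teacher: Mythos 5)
You have the right technical ingredients (the Taylor coefficient identity $\Phi^{(\ell)}(0)=R^{-\ell}\sum_i\alpha_i(m_i)_\ell$, the Stirling transforms between falling factorials and ordinary powers, and the Touchard identities), and these are exactly what the paper uses. But you have misread the logical structure of the proposition, and that misreading is what creates the phantom ``main obstacle'' you spend the second half of the proposal wrestling with.

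The statement is not an existence claim about $\Phi$. On both sides of the ``if and only if'' the data $(\alpha_1,\dots,\alpha_s; m_1,\dots,m_s)$ with $m_i$ integers, $0\le m_1<\dots<m_s\le m$, are fixed and \emph{the same}: one side asserts that the identity $\mathcal{H}_n(x;R)=\sum_i\alpha_i(x-m_i)^n$ holds with $\alpha_i\ge 0$, the other asserts that the polynomial $\Phi$ \emph{built from that same data} via \eqref{phi-polynomial} is absolutely monotonic on $[-R,0]$, has degree $\le m$, and matches $e^x$ to order $n$. So the converse direction does \emph{not} require you to show that an arbitrary absolutely monotonic $\Phi\in\Pi_{m,n}$ must decompose as a non-negative combination of the monomials $(1+x/R)^{m_i}$ --- that much harder claim is closer in spirit to Corollary~\ref{HR} and Theorem~\ref{Kmaintheorem}, and it is correct that it would fail if you tried to prove it at this stage. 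What the converse actually needs is only: (i) from absolute monotonicity of $\Phi$ at $-R$, read off that $\Phi^{(m_i)}(-R)=\alpha_i(m_i)_{m_i}/R^{m_i}\ge 0$, hence $\alpha_i\ge 0$; (ii) from the order condition $\Phi^{(\ell)}(0)=1$ for $\ell\le n$, deduce $\sum_i\alpha_i(m_i)_\ell=R^\ell$, and then by the Stirling-number bookkeeping you already described show that the polynomial $\mathcal{A}_n(x;R):=\sum_i\alpha_i(x-m_i)^n$ has exactly the monomial coefficients $(-1)^j\binom{n}{j}B_j(R)$ of $\mathcal{H}_n(\cdot;R)$, so the two coincide. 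Your closing move of ``invoking the uniqueness of such representations from Sections 2--3'' is circular --- this proposition \emph{is} the foundational result of Section~2, so there is nothing earlier to invoke. Also worth flagging: for absolute monotonicity in the forward direction the paper checks all derivatives at $x=-R$ and then cites the elementary fact that a polynomial with all derivatives $\ge0$ at the left endpoint is absolutely monotonic on the whole interval; your remark that the derivatives are non-negative ``on $[-R,0]$'' is correct in substance but is cleaner to phrase as a single-endpoint check.
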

\begin{proof}
	Let us assume that the polynomial $\mathcal{H}_n(.;R)$ admits a representation
	of the form (\ref{hsylvester}). We have 
	\begin{equation*}
	\Phi^{(\ell)} (x) = \sum_{i=1}^{s} \frac{\alpha_{i} (m_{i})_{\ell}}{R^{\ell}}
	\left( 1 + \frac{x}{R} \right)^{m_{i}-\ell}.
	\end{equation*} 
	Thus, $\Phi^{(\ell)} (-R) = 0$ if $\ell \notin \{m_{1},m_{2},\ldots,m_{s} \}$ and 
	$\Phi^{(m_{i})}(-R) = \alpha_{i} (m_{i})_{m_{i}}/R^{m_{i}} \geq 0$ for $i=1,2,\ldots,s$.
	Therefore, the polynomial $\Phi$ is absolutely monotonic at $-R$ and hence is 
	absolutely monotonic over the interval $[-R,0]$ (see Lemma 4.3 in \cite{Kra2}). 
	Moreover, from (\ref{hpolynomial}) and (\ref{hsylvester})
	we have  $\sum_{i=1}^{s} \alpha_{i} m_{i}^{\ell} = B_{\ell}(R)$ 
	for $\ell =0,1,\ldots,n$. Therefore, using (\ref{stirlings}) and (\ref{stirling-bell}), 
	we obtain for $\ell=0,1,\ldots,n$,   
	\begin{equation}
	\label{zero-derivatives}
	\Phi^{(\ell)} (0) = \frac{\sum_{i=1}^{s} \alpha_{i} (m_{i})_{\ell}}{R^{\ell}} =  
	\frac{ \sum_{j=1}^{\ell} s(\ell,j) \sum_{i=1}^{s} \alpha_{i} m_{i}^{j}} {R^{\ell}}= 
	\frac{\sum_{j=1}^{\ell} s(\ell,j) B_{j}(R)}{R^{\ell}} =1.  
	\end{equation}    
	Therefore, we have $\Phi(x) - e^{x} = O(x^{n+1})$ as $x \rightarrow 0$. 
	Conversely, given real numbers $\alpha_1,\ldots,\alpha_s$ and given integers 
	$0\leq m_1 <m_2<\ldots< m_s \leq m$, assume that the corresponding polynomial $\Phi$ in 
	(\ref{phi-polynomial}) is absolutely monotonic over $[-R,0]$ and that it satisfies  
	$\Phi(x) - e^{x} = \mathcal{O}(x^{n+1})$ as $x\rightarrow 0$. Then necessarily the coefficients 
	$\alpha_{i},i=1,2,\ldots,s$ are non-negative. Denote by $\mathcal{A}_{n}(.;R)$ 
	the polynomial $\mathcal{A}_n(x;R) = \sum_{i=1}^{s} \alpha_i (x-m_i)^{n}$.
	Using (\ref{stirlings}) the coefficient $a_{j}$ attached to the monomial $x^{n-j}$ of 
	the polynomial $\mathcal{A}_n(.;R)$ is given by 
	\begin{equation*}
	a_{j} = (-1)^{j} \binom{n}{j} \sum_{i=1}^{s} \alpha_{i} m_{i}^{j}=
	(-1)^{j} \binom{n}{j} \sum_{\ell=0}^{j} S(j,\ell) \sum_{i=1}^{s} 
	\alpha_{i} (m_{i})_{\ell}.
	\end{equation*}
	According to (\ref{zero-derivatives}), we have 
	$\sum_{i=1}^{s} \alpha_{i} (m_{i})_{\ell} = R^{\ell}$. 
	Thus, form (\ref{stirling-bell}) we obtain
	\begin{equation*}
	a_{j} =  (-1)^{j} \binom{n}{j} \sum_{\ell=0}^{j} S(j,\ell) R^{\ell} = 
	(-1)^{j} \binom{n}{j} B_{j}(R). 
	\end{equation*}
	Therefore, the coefficient $a_{j}$ coincide with the coefficient of 
	$x^{n-j}$ of the polynomial $\mathcal{H}_n(.;R)$
	given in (\ref{hpolynomial}). Hence, the polynomials $\mathcal{A}_n(.;R)$ 
	and $\mathcal{H}_n(.;R)$ coincide. 
\qed
\end{proof}
From the previous proposition, the optimal threshold factor 
$R_{m,n}$ defined in (\ref{ThresholdDefinition}) can also be characterized 
as follows.
\begin{corollary}
	\label{HR}
	Let $m$ and $n$ be positive integers such that $m \geq n$. 
	The optimal threshold factor $R_{m,n}$ is the maximum of positive real 
	numbers $R$ for which the polynomial $\mathcal{H}_n(.;R)$ admits a representation 
	of the form 
	\begin{equation}
	\label{TemporarySylvester}
	\mathcal{H}_n(x;R) = \sum_{i=1}^{s} \alpha_i (x-m_i)^{n},  \quad s \geq 1,
	\end{equation}
	with integers $\; 0 \leq m_{1} < m_{2}< \ldots <m_{s}\leq m$ and non-negative 
	real numbers $\alpha_{1},\alpha_{2},\ldots,\alpha_{s}$.
\end{corollary}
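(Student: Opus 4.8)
The plan is to derive Corollary~\ref{HR} as an immediate consequence of the Proposition together with the definition~(\ref{ThresholdDefinition}) of $R_{m,n}$. The Proposition establishes, for a fixed positive real $R$, a two-way correspondence between representations of $\mathcal{H}_n(\cdot;R)$ of the form~(\ref{hsylvester}) with nonnegative $\alpha_i$ and integer nodes $0\le m_1<\cdots<m_s\le m$, and polynomials $\Phi$ of the form~(\ref{phi-polynomial}) that are of degree at most $m$, absolutely monotonic on $[-R,0]$, and satisfy $\Phi(x)-e^x=\mathcal{O}(x^{n+1})$. The task, then, is to translate the supremum defining $R_{m,n}$ into a statement about the existence of such representations of $\mathcal{H}_n(\cdot;R)$, and to check that the supremum is actually attained, so that ``maximum'' is justified.

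First I would fix $R>0$ and argue the equivalence of the two conditions ``$\mathcal{H}_n(\cdot;R)$ admits a representation~(\ref{TemporarySylvester})'' and ``there exists $\phi\in\Pi_{m,n}$ with $R(\phi)\ge R$.'' The forward direction is handled by the Proposition: a representation of $\mathcal{H}_n(\cdot;R)$ produces $\Phi$ as in~(\ref{phi-polynomial}), which lies in $\Pi_{m,n}$ (degree $\le m$, correct order of tangency with $e^x$) and has $R(\Phi)\ge R$ since it is absolutely monotonic on $[-R,0]$. For the converse I would note that if $\phi\in\Pi_{m,n}$ has $R(\phi)\ge R$, then $\phi$ is absolutely monotonic on $[-R,0]$, so by the standard fact that a polynomial absolutely monotonic at a point of an interval with the right sign pattern can be expanded in the shifted-power basis there (the $(1+x/R)^k$ basis), one writes $\phi(x)=\sum_{k=0}^{m}\beta_k(1+x/R)^k$ with $\beta_k=\phi^{(k)}(-R)R^k/k!\ge 0$; keeping only the terms with $\beta_k\ne 0$ and relabelling gives $\Phi$ of the form~(\ref{phi-polynomial}) with nonnegative coefficients and integer exponents in $[0,m]$. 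Since $\Phi=\phi$ still satisfies the order condition, the Proposition's converse direction yields the representation~(\ref{TemporarySylvester}) for $\mathcal{H}_n(\cdot;R)$. This shows the set of admissible $R$ in the Corollary coincides with $\{R>0:\exists\,\phi\in\Pi_{m,n},\ R(\phi)\ge R\}$, whose supremum is exactly $R_{m,n}$.

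It then remains to upgrade the supremum to a maximum. Here I would invoke Kraaijevanger's result, quoted in the introduction, that there is a (unique) optimal threshold polynomial $\Phi_{m,n}\in\Pi_{m,n}$ with $R(\Phi_{m,n})=R_{m,n}>0$; applying the converse direction of the Proposition to $\Phi_{m,n}$ (with $R=R_{m,n}$) produces a representation~(\ref{TemporarySylvester}) of $\mathcal{H}_n(\cdot;R_{m,n})$, so $R_{m,n}$ itself belongs to the admissible set and is therefore the maximum. Alternatively, one can give a self-contained compactness argument: normalize so that $\sum_i\alpha_i$ is controlled (it equals $\Phi(0)=1$ by~(\ref{zero-derivatives}) with $\ell=0$), note the $m_i$ range over the finite set $\{0,1,\ldots,m\}$, and pass to the limit along a maximizing sequence of $R$'s, using continuity of the coefficients of $\mathcal{H}_n(\cdot;R)$ in $R$ and closedness of the cone of nonnegative combinations of the fixed finite family $\{(x-j)^n\}_{j=0}^m$.

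The one genuine point requiring care --- the main obstacle --- is the converse expansion step: justifying that $\phi$ absolutely monotonic on $[-R,0]$ admits the nonnegative expansion in the basis $\{(1+x/R)^k\}_{k=0}^{m}$. This is precisely the content of the cited Lemma~4.3 in~\cite{Kra2} (absolute monotonicity at the left endpoint forces, and is forced by, nonnegativity of all $\phi^{(k)}(-R)$), so the argument is short once that lemma is in hand; the rest is bookkeeping, identifying the admissible sets on the two sides and checking the supremum is attained.
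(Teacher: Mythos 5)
Your argument is correct and is exactly the filling-in that the paper leaves implicit when it says the corollary follows from the preceding proposition: the proposition already provides the two-way translation between representations of $\mathcal{H}_n(\cdot;R)$ and absolutely monotonic polynomials of the form $\sum\alpha_i(1+x/R)^{m_i}$, and your observation that any $\phi\in\Pi_{m,n}$ with $R(\phi)\ge R$ can be written in that form via its nonnegative Taylor coefficients at $-R$ closes the loop, so the two supremum sets coincide. Invoking Kraaijevanger's existence of the optimal $\Phi_{m,n}$ (already cited in the paper) to upgrade the supremum to a maximum is the intended route, and your alternative compactness argument is also sound.
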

In \cite{kraaPoly} Kraaijevanger showed that the optimal threshold polynomial 
$\Phi_{m,n}$ satisfies the property that at least $(m-n+1)$ numbers of the sequence 
$\{\Phi_{m,n}^{(k)}(-R_{m,n})\}_{k=0}^{m}$ vanish. In terms of the polynomial 
$\mathcal{H}_n(.;R_{m,n})$ this claim can be re-stated as saying that for $R = R_{m,n}$, the number of summands in the right-hand side of (\ref{TemporarySylvester}) is at most $n$. More precisely, 
we have the following theorem.   

\begin{theorem}
	\label{Kmaintheorem}
	For any positive integers $m$ and $n$ such that $m \geq n$, there exist integers 
	$0\leq m_{1} < m_{2}< \ldots <m_{n}\leq m$ and non-negative real numbers 
	$\alpha_{1},\alpha_{2},\ldots,\alpha_{n}$ such that 
	\begin{equation*}
	\mathcal{H}_n(x;R_{m,n}) = \sum_{i=1}^{n} \alpha_i (x-m_i)^{n}.
	\end{equation*}
\end{theorem}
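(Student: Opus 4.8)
The plan is to deduce Theorem~\ref{Kmaintheorem} from the Proposition above together with the property, recalled in the paragraph preceding the statement, that at least $m-n+1$ of the numbers $\Phi_{m,n}^{(k)}(-R_{m,n})$, $k=0,\dots,m$, vanish. First I would expand the optimal threshold polynomial in the power basis centred at $-R_{m,n}$:
\begin{equation*}
\Phi_{m,n}(x)=\sum_{k=0}^{m}\frac{\Phi_{m,n}^{(k)}(-R_{m,n})\,R_{m,n}^{k}}{k!}\left(1+\frac{x}{R_{m,n}}\right)^{k}.
\end{equation*}
Because $R(\Phi_{m,n})=R_{m,n}$, the polynomial $\Phi_{m,n}$ is absolutely monotonic on $[-r,0]$ for every $r<R_{m,n}$, hence on $(-R_{m,n},0]$, and by continuity $\Phi_{m,n}^{(k)}(-R_{m,n})\ge 0$ for all $k$; thus every coefficient in the expansion is non-negative. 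Let $0\le m_{1}<m_{2}<\dots<m_{s}\le m$ be precisely the indices $k$ with $\Phi_{m,n}^{(k)}(-R_{m,n})>0$ and let $\alpha_{i}>0$ be the associated coefficient. Then $\Phi_{m,n}$ has the form (\ref{phi-polynomial}) with $R=R_{m,n}$ and data $(\alpha_{i},m_{i})$, it is absolutely monotonic on $[-R_{m,n},0]$, and, as a member of $\Pi_{m,n}$, it satisfies $\Phi_{m,n}(x)-e^{x}=\mathcal{O}(x^{n+1})$. The converse direction of the Proposition therefore applies and gives $\mathcal{H}_{n}(x;R_{m,n})=\sum_{i=1}^{s}\alpha_{i}(x-m_{i})^{n}$.

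It remains only to bound $s$ and to reach exactly $n$ terms. Since the $m+1$ numbers $\Phi_{m,n}^{(k)}(-R_{m,n})$ are all non-negative and at least $m-n+1$ of them are zero, the number $s$ of strictly positive ones satisfies $s\le (m+1)-(m-n+1)=n$. If $s<n$, then since $m+1>n$ there are at least $n-s$ integers in $\{0,\dots,m\}\setminus\{m_{1},\dots,m_{s}\}$; attaching the coefficient $0$ to $n-s$ of them and re-indexing in increasing order yields the desired representation $\mathcal{H}_{n}(x;R_{m,n})=\sum_{i=1}^{n}\alpha_{i}(x-m_{i})^{n}$ with $0\le m_{1}<\dots<m_{n}\le m$ and non-negative $\alpha_{i}$.

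The one substantive ingredient---and the step I expect to be the main obstacle---is the vanishing count $\#\{k:\Phi_{m,n}^{(k)}(-R_{m,n})=0\}\ge m-n+1$, where the optimality of $R_{m,n}$ must be used. To keep the argument self-contained (the paper revisits Kraaijevanger's work with its own formalism) I would prove it by perturbing at the optimum: suppose the active set $S=\{k:\Phi_{m,n}^{(k)}(-R_{m,n})>0\}$ had $|S|\ge n+1$. For $\widetilde{R}$ near $R_{m,n}$ look for $\widetilde{\Phi}(x)=\sum_{j\in S}\beta_{j}(\widetilde{R})\left(1+x/\widetilde{R}\right)^{m_{j}}$ satisfying the $n+1$ order conditions $\widetilde{\Phi}^{(\ell)}(0)=1$, $\ell=0,\dots,n$. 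Since $|S|\ge n+1$ and the coefficient matrix $\bigl((m_{j})_{\ell}\bigr)_{\ell,j}$ has full row rank $n+1$ (a lower-triangular Stirling change of basis away from a Vandermonde matrix in the distinct nodes $m_{j}$), this linear system is solvable, and a solution can be chosen---for instance the minimum-norm one---depending smoothly on $\widetilde{R}$ and equal to $(\alpha_{j})_{j\in S}$ at $\widetilde{R}=R_{m,n}$. By continuity $\beta_{j}(\widetilde{R})>0$ for all $j\in S$ when $\widetilde{R}=R_{m,n}+\varepsilon$ with $\varepsilon>0$ small; the resulting $\widetilde{\Phi}$ then has degree $\max S\le m$, lies in $\Pi_{m,n}$, and is absolutely monotonic on $[-\widetilde{R},0]$ because all $\beta_{j}>0$ and all $m_{j}\ge 0$ (apply \cite[Lemma~4.3]{Kra2}). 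Hence $R(\widetilde{\Phi})\ge R_{m,n}+\varepsilon$, contradicting the definition of $R_{m,n}$. The delicate points are the smooth selection of the weights when $|S|>n+1$ and the verification that the perturbation does not push the degree above $m$; both are handled here because the nodes remain fixed in $\{0,\dots,m\}$ while only $\widetilde{R}$ varies.
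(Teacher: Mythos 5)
Your overall strategy matches the paper's: derive a non-negative representation of $\mathcal{H}_n(\,\cdot\,;R_{m,n})$ from the optimal polynomial, then argue that if more than $n$ coefficients are strictly positive one can perturb $R$ upward and still solve the moment system positively, contradicting optimality. The first half of your argument (Taylor-expanding $\Phi_{m,n}$ at $-R_{m,n}$, using continuity of absolute monotonicity to get nonnegative derivatives, and invoking the converse direction of the Proposition) is a clean and correct alternative to simply citing Corollary~\ref{HR}; the paper takes a shortcut there.

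The genuine gap is in the perturbation step. You propose to track ``for instance the minimum-norm'' solution of the underdetermined system $\sum_{j\in S}\beta_j (m_j)_\ell=\widetilde R^{\,\ell}$, $\ell=0,\dots,n$, and assert that it equals $(\alpha_j)_{j\in S}$ at $\widetilde R=R_{m,n}$. That is false in general when $|S|>n+1$: the minimum-norm solution $A^+b(\widetilde R)$ always lies in the row space of $A$, while $(\alpha_j)_{j\in S}$ need not, so the minimum-norm curve does not pass through $(\alpha_j)$ at $\widetilde R=R_{m,n}$ and its entries need not be positive there. Your continuity argument would therefore break down. The fix is either the affine selection $\beta(\widetilde R)=\alpha+A^+\bigl(b(\widetilde R)-b(R_{m,n})\bigr)$, or the paper's cleaner device: freeze the coefficients attached to all but $n+1$ of the active nodes and solve the remaining regular $(n+1)\times(n+1)$ system for $(\beta_1,\dots,\beta_{n+1})$ with right-hand side $\mathcal{H}_n(x;R_{m,n}+\epsilon)-\sum_{i>n+1}\alpha_i(x-m_i)^n$. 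Because that square system is nonsingular (Vandermonde in distinct nodes after a triangular Stirling change of basis) and its solution equals $(\alpha_1,\dots,\alpha_{n+1})>0$ at $\epsilon=0$, continuity immediately gives positivity for small $\epsilon>0$, with no selection issue to worry about. You flagged this as ``the delicate point,'' but as written the specific mechanism you offered does not close it.
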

\begin{proof}
	Let us assume that $\mathcal{H}_n(x;R_{m,n}) = 
	\sum_{i=1}^{s} \alpha_i (x-m_i)^{n}$ with $s >n$ and  
	$\alpha_{i} > 0, i=1,2,\ldots,s$. Write 
	\begin{equation}
	\label{hsystem}
	\sum_{i=1}^{n} \alpha_i (x-m_i)^{n} =
	\mathcal{H}_n(x;R_{m,n}) - \sum_{i=n+1}^{s} \alpha_i (x-m_i)^{n}.
	\end{equation}
	Equation (\ref{hsystem}) can be viewed as a linear system in 
	$(\alpha_{1},\alpha_{2},\ldots,\alpha_{n})$, i.e; 
	\begin{equation*}
	\sum_{i=1}^{n} \alpha_i m_i^{j} = B_{j}(R_{m,n}) - 
	\sum_{i=n+1}^{s} \alpha_i m_i^{j}, \quad j=0,1,\ldots,n,
	\end{equation*}  
	that has a positive solution,\ie $\alpha_{i} > 0$ for $i=1,2,\ldots,n$. 
	Therefore, there exists an $\epsilon > 0$ such that the linear system in 
	$(\beta_{1},\beta_{2},\ldots,\beta_{n})$ 
	\begin{equation*}
	\sum_{i=1}^{n} \beta_i (x-m_i)^{n} = 
	\mathcal{H}_n(x;R_{m,n}+\epsilon) - \sum_{i=n+1}^{s} \alpha_i (x-m_i)^{n} 
	\end{equation*}
	also has a positive solution. Thus, we obtain
	\begin{equation*} 
	\mathcal{H}_n(x;R_{m,n}+\epsilon) =  \sum_{i=1}^{n} \beta_i (x-m_i)^{n} 
	+ \sum_{i=n+1}^{s} \alpha_i (x-m_i)^{n}.
	\end{equation*}
	This contradicts the definition of $R_{m,n}$ as given in Corollary \ref{HR}.
\qed
\end{proof}  
\section{Polar forms and Kraaijevanger's algorithm}
\label{sec:3}
Polar forms (or blossoms) for polynomials \cite{ramshaw} are compelling tools in various 
mathematical areas \cite{aithaddou1,aithaddou2,aithaddou3,aithaddou4}. They will prove helpful, 
even essential, at several places in this work. In the present section, after a brief reminder 
of their definition, we will use them to give a simple description of the algorithm proposed by Kraaijevanger \cite{kraaPoly} for computing the optimal threshold factors.

\medskip 

{\bf{Notation: }}Throughout the article, for any real number $x$ and any 
non-negative integer $k$, $x^{[k]}$ will stand for $x$ repeated $k$ times.    

\begin{definition}
	Given a real polynomial $P$ of degree at most $n$, there exists a unique 
	symmetric multi-affine function $p(u_1,u_2,\ldots,u_n)$ such that 
	$p(x^{[n]}) = P(x)$ for any $x \in \mathbb{R}$. The function $p$ 
	is called the {\it{blossom}} or the {\it{polar form}} of the polynomial $P$.
\end{definition}
The polar form of a polynomial $P$ expressed in the monomial basis as 
$P(x) = \sum_{k=0}^{n} a_{k} x^{k}$ is given by 
\begin{equation*}
p(u_1,u_2,\ldots,u_n) = \sum_{k=0}^{n} a_{k} \sigma_{k}(u_1,u_2,\ldots,u_n),
\end{equation*}
where $\sigma_{k}$ refers to the normalized $k$-th elementary symmetric polynomial,\ie
\begin{equation*}
\sigma_{k}(u_1,u_2,\ldots,u_n) = \binom{n}{k}^{-1} \sum_{1 \leq j_{1} <\ldots<j_{k} \leq n} 
u_{j_{1}} u_{j_{2}} \ldots u_{j_{k}}.
\end{equation*}
Of special interest within this work are polynomials of the form 
\begin{equation*}
P(x) = \sum_{k=1}^{s} \alpha_k (x-a_k)^{n}.
\end{equation*}
Their polar forms are simply given by 
\begin{equation}
\label{PolarProduct}
p(u_1,u_2,\ldots,u_n) = \sum_{k=1}^{s} \alpha_k \prod_{i=1}^{n} (u_{i}-a_k).
\end{equation}

We shall need the following proposition.  
\begin{proposition}
	\label{Prop:Polar}
	Let $P$ be a real polynomial of degree at most $n$ and $p$ its polar form. 
	Given any pairwise distinct real numbers $\xi_1, \ldots, \xi_k$, we have  
	\begin{equation}
	\label{polar_condition}
	p(\xi_1,\xi_2,\ldots,\xi_k,x^{[n-k]}) = 0 
	\quad \textnormal{for any} \quad x \in \mathbb{R}, 
	\end{equation}
	if and only if the polynomial $P$ can be written in the form
	$\displaystyle{P(x) = \sum_{j=1}^k \alpha_j (x-\xi_j)^{n}}$.
\end{proposition}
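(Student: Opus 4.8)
\textbf{Proof plan for Proposition~\ref{Prop:Polar}.}
The plan is to exploit the one-to-one correspondence between a polynomial $P$ of degree at most $n$ and its polar form $p$, together with the explicit formula \eqref{PolarProduct} for polar forms of sums of $n$-th powers. First I would prove the ``if'' direction, which is immediate: if $P(x)=\sum_{j=1}^k \alpha_j (x-\xi_j)^n$ then by \eqref{PolarProduct} its polar form is $p(u_1,\ldots,u_n)=\sum_{j=1}^k \alpha_j \prod_{i=1}^n (u_i-\xi_j)$, and substituting $u_i=\xi_i$ for $i=1,\ldots,k$ kills every summand (the $j$-th term contains the factor $(\xi_j-\xi_j)=0$), so $p(\xi_1,\ldots,\xi_k,x^{[n-k]})=0$ identically in $x$.

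For the ``only if'' direction, I would argue by induction on $k$. The base case $k=1$ says: if $p(\xi_1,x^{[n-1]})=0$ for all $x$, then $P(x)=\alpha_1(x-\xi_1)^n$. This follows because $p(\xi_1,x^{[n-1]})$, as a polynomial in $x$ of degree at most $n-1$, is (up to a constant) the derivative-type object obtained by specializing one argument; vanishing identically forces $P(x)-P(\xi_1)$ to have an $n$-fold... more carefully: write $P(x)=\sum_{j=0}^n a_j\binom{n}{j}^{-1}$-type expansion via elementary symmetric functions, or better, use that a multi-affine symmetric $p$ with $p(\xi_1,x^{[n-1]})\equiv 0$ means the univariate polynomial $Q(x):=p(\xi_1,\xi_1,x^{[n-2]})$ etc. — the cleanest route is: $p(\xi_1,x^{[n-1]})\equiv0$ together with multi-affinity implies $P(x)=p(x^{[n]})$ vanishes to order $n$ at... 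Actually the transparent statement is that $p(\xi_1,x^{[n-1]})=\tfrac{1}{n}\big(P(x)\cdot\text{(something)}\big)$; I would instead simply note that the map sending $P$ to the univariate polynomial $x\mapsto p(\xi_1,x^{[n-1]})$ is linear with kernel exactly $\{\alpha(x-\xi_1)^n\}$, which one checks by a dimension count: the domain has dimension $n+1$, the codomain (polynomials of degree $\le n-1$) has dimension $n$, the map is surjective (it sends $(x-\xi_1)^j(x-c)^{n-j}$ to a nonzero multiple of $(x-c)^{n-1}$ when $j\ge1$), hence the kernel is one-dimensional and contains $(x-\xi_1)^n$.

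For the inductive step, suppose the claim holds for $k-1$ and assume $p(\xi_1,\ldots,\xi_k,x^{[n-k]})\equiv 0$. Consider the polynomial $\tilde P$ of degree at most $n-1$ whose polar form is $\tilde p(u_1,\ldots,u_{n-1}):=p(\xi_k,u_1,\ldots,u_{n-1})$. The hypothesis says $\tilde p(\xi_1,\ldots,\xi_{k-1},x^{[n-k]})\equiv0$, so by the inductive hypothesis (applied in degree $n-1$ with $k-1$ distinct nodes) $\tilde P(x)=\sum_{j=1}^{k-1}\beta_j(x-\xi_j)^{n-1}$. Then I would reconstruct $P$ from the information that specializing one argument of its polar form to $\xi_k$ gives a known polynomial: the condition $p(\xi_k,x^{[n-1]})=\sum_{j=1}^{k-1}\beta_j(x-\xi_j)^{n-1}$ determines $P$ up to adding a polynomial in the kernel of $P\mapsto p(\xi_k,x^{[n-1]})$, which by the base-case analysis is exactly $\alpha_k(x-\xi_k)^n$. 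It remains to check that the particular lift $\sum_{j=1}^{k-1}\gamma_j(x-\xi_j)^n$ reproducing $\sum\beta_j(x-\xi_j)^{n-1}$ upon specialization is consistent — i.e.\ that $\gamma_j$ can be chosen with the $\xi_j$ fixed — and this is where the distinctness of $\xi_1,\ldots,\xi_k$ is used, since $\prod_{i=1}^{n}(u_i-\xi_j)$ specialized at $u_n=\xi_k$ gives $(\xi_k-\xi_j)\prod_{i=1}^{n-1}(u_i-\xi_j)$ with the nonzero scalar $(\xi_k-\xi_j)$, so one takes $\gamma_j=\beta_j/(\xi_k-\xi_j)$. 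Putting $P(x)=\sum_{j=1}^{k-1}\gamma_j(x-\xi_j)^n+\alpha_k(x-\xi_k)^n$ then has the required form. The main obstacle is bookkeeping the lift/kernel argument cleanly across the induction so that the nodes $\xi_j$ are literally preserved rather than merely ``some $k$ nodes''; the nonvanishing factors $(\xi_k-\xi_j)$ supplied by pairwise distinctness are exactly what makes this work.
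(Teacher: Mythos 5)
Your argument is correct and reaches the same conclusion, but by a genuinely different route from the paper's. The paper handles both implications in one pass: extend $\xi_1,\ldots,\xi_k$ to $n$ pairwise distinct points $\xi_1,\ldots,\xi_n$, expand $P(x)=A+\sum_{i=1}^n\alpha_i(x-\xi_i)^n$ in the basis $\{1,(x-\xi_1)^n,\ldots,(x-\xi_n)^n\}$ of $\Pi_n$, and read off directly from \eqref{PolarProduct} that $p(\xi_1,\ldots,\xi_k,x^{[n-k]})=A+\sum_{i=k+1}^n\alpha_i\prod_{j=1}^k(\xi_j-\xi_i)\,(x-\xi_i)^{n-k}$; vanishing of this polynomial forces $A=0$ and, since $\prod_{j=1}^k(\xi_j-\xi_i)\neq 0$ for $i>k$, also $\alpha_i=0$ for $i>k$, which is exactly the claimed form. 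You instead run an induction on $k$ with $n$ dropping in lockstep: a rank--nullity base case identifying $\ker\bigl(P\mapsto p(\xi_1,x^{[n-1]})\bigr)=\mathbb{R}\,(x-\xi_1)^n$, then an inductive step specializing one slot at $\xi_k$, invoking the hypothesis in degree $n-1$ for $\xi_1,\ldots,\xi_{k-1}$, and lifting back with the scalars $\gamma_j=\beta_j/(\xi_k-\xi_j)$, the residual freedom being absorbed by the kernel element $\alpha_k(x-\xi_k)^n$. Both hinge on the same nonvanishing products supplied by distinctness; the paper's version is shorter and avoids the lift/kernel bookkeeping, while yours isolates the one-slot kernel explicitly, which is a transparent way to see why $k$ summands suffice. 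One small slip in your base case: under $P\mapsto p(\xi_1,x^{[n-1]})=P(x)+\tfrac{\xi_1-x}{n}P'(x)$, the polynomial $(x-\xi_1)^j(x-c)^{n-j}$ with $j\ge 1$ maps to $\tfrac{n-j}{n}(\xi_1-c)(x-\xi_1)^j(x-c)^{n-j-1}$, not to a multiple of $(x-c)^{n-1}$; surjectivity should instead come from $j=0$, i.e.\ $(x-c)^n\mapsto(\xi_1-c)(x-c)^{n-1}$ with $c$ ranging over $n$ distinct points $\neq\xi_1$. This is easily corrected and does not undermine the plan.
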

\begin{proof}
	The function $\widetilde P(x):= p(\xi_1, \ldots, \xi_k, x^{[n-k]})$ is a polynomial of degree at most $(n-k)$.
	Select any pairwise distinct $\xi_{k+1}, \ldots, \xi_n$ in $\mathbb{R}\setminus\{\xi_1,\ldots,\xi_k\}$. 
	Let us expand $P$ as $P(x)=A+\sum_{i=1}^n\alpha_i(x-\xi_i)^n$. Then, from (\ref{polar_condition}) we obtain
	\begin{equation*}
	\widetilde P(x)=A+\sum_{i=k+1}^n \beta_i(x-\xi_i)^{n-k}, \quad x\in\mathbb{R},
	\end{equation*}
	with $\beta_i:=\alpha_i\prod_{j=1}^k(\xi_j-\xi_i), i=k+1, \ldots, n$. 
	Accordingly, the polynomial $\widetilde P$ is identically zero if and only all coefficients 
	$A, \beta_{k+1}, \ldots, \beta_n$ are zero, that is, 
	if and only if $A$ and $\alpha_{k+1},\ldots,\alpha_n$ are zero. The claim is proved. 
\qed      
\end{proof} 

Now, we are in a position to describe the algorithm of Kraaijenvanger for computing 
the optimal threshold factor $R_{m,n}$ and the associated polynomial $\Phi_{m,n}$.
If we write the polynomial $\mathcal{H}_n(.;R)$ defined in (\ref{hpolynomial}) 
in the form 
\begin{equation}
\label{Tempoh}
\mathcal{H}_n(x;R) = \sum_{i=1}^{n} \alpha_i (x-m_i)^{n},
\end{equation}
then, by denoting $h_{n}(u_{1},u_{2},\ldots,u_{n};R)$ the value at 
$(u_1,u_2,\ldots,u_n)$ of the polar form of the polynomial $\mathcal{H}_n(.;R)$ 
and applying (\ref{PolarProduct}), we obtain  
\begin{equation}
\label{zerocondition}
h_{n}(m_{1},m_{2},\ldots,m_{n};R)=0.
\end{equation}
Moreover, evaluating the polar form of both sides of (\ref{Tempoh}) at 
$(m_{1},m_{2},\ldots,m_{k-1},m+1,m_{k+1},\ldots,m_{n})$ yields
\begin{equation}
\label{positivitycondition}
\alpha_{k} = \frac{h_{n}(m_{1},m_{2},\ldots,m_{k-1},m+1,m_{k+1},\ldots,m_{n};R)}
{(m+1-m_{k}) \prod_{i=1, i\not=k}^{n} (m_{i} - m_{k})}, \quad k=1,2,\ldots,n.
\end{equation}
Based on (\ref{zerocondition}) and (\ref{positivitycondition}), an algorithm 
for computing $R_{m,n}$ goes as follows:

\vskip 0.3 cm

\noindent
$\bullet$ {\bfseries{Step 1}}:  Generate all integer sequences $M = (m_{1},m_{2},\ldots,m_{n})$ 
such that $0\leq m_{1}<m_{2}<\ldots<m_{n} \leq m$. For each such sequence, 
find the positive numbers $R$ satisfying (\ref{zerocondition}) (if any). 
Note that for each such integer sequence $M$, Equation (\ref{zerocondition}) is a polynomial 
equation of degree $n$ in $R$. 

\vskip 0.3 cm

\noindent
$\bullet$ {\bfseries{Step 2}}: For each of the real numbers $R$ found in Step 1,  
check the non-negativity of the coefficients $\alpha_{k}$ using equations (\ref{positivitycondition}).
Retain the numbers $R$ and the associated sequences $M$ for which all the coefficients $\alpha_{k}$ 
are non-negative. 

\vskip 0.3 cm

\noindent
$\bullet$ {\bfseries{Step 3}}: $R_{m,n}$ is the maximum of all the values $R$ that 
survived elimination from Step 2. The optimal threshold polynomial is then given by 
\begin{equation*}
\Phi_{m,n}(x) = \sum_{k=1}^{n} \alpha_k \left(1 + \frac{x}{R_{m,n}} \right)^{m_i}
\end{equation*}
where $(m_1,m_2,\ldots,m_n)$ the integer sequence associated with $R_{m,n}$ and  
$\alpha_{k}$ are the coefficients that were already computed using (\ref{positivitycondition}).    

\vskip 0.3 cm

Evidently, the computational cost of the above algorithm grows exponentially in $m$ and $n$ 
and could only be used to compute the optimal threshold factors for very small values of 
$m$ and $n$. As will be clear later, the above algorithm can be substantially improved by 
our results of Section 6 where we identify a structural property of the optimal threshold 
polynomials that considerably reduces the number of integer sequences $M = (m_{1},m_{2},\ldots,m_{n})$ 
to be considered in Step 1 of the algorithm. We will have further comments on this aspect of 
the algorithm but we should stress that in Section 8, we propose a highly efficient algorithm
for the computation of $R_{m,n}$ whose computational cost is independent of the integer $m$. 
We would like to mention that a method of computing $R_{m,n}$ and the associated optimal 
threshold polynomial based on linear programming is presented in \cite{ketcheson}. 
However, the algorithm in question suffers from stability problems for large value
of the integer $m$.

\section{Poisson-Charlier orthogonal polynomials and sharp upper bounds for the optimal threshold factors}
In this section, we give a connection between Poisson-Charlier orthogonal 
polynomials and the polynomials $\mathcal{H}_n(.;R)$ defined in (\ref{hpolynomial}). 
This will enable us to give a sharp upper bound for the optimal threshold factors in terms
of the smallest zero of generalized Laguerre polynomials. 

The monic Poisson-Charlier polynomials $C_{n}(.,R)$ are orthogonal with respect to the 
discrete Poisson-Charlier measure (\ref{CharlierPoissonMeasure}). Thus, they satisfy the 
orthogonality relations \cite[pp. 170]{chihara}  
\begin{equation*}
\int C_{n}(t,R) C_{m}(t,R) d\mu_{R}(t) =  \sum_{j=0}^{\infty}  
C_{n}(j,R) C_{m}(j,R)e^{-R}\frac{R^{j}}{j!} = n! R^{n}\delta_{nm}.   
\end{equation*}
The Poisson-Charlier polynomials satisfy the three-term recurrence relation
\begin{equation}
\label{recurrence}
t C_{n}(t,R) = C_{n+1}(t,R) + (R+n)C_{n}(t,R) + nR C_{n-1}(t,R).
\end{equation}
with $C_0(t,R) = 1$ and  $C_{1}(t,R) = t-R$.

It is well known that the moments of Poisson-Charlier measures are Touchard 
polynomials.  However, as we were not able to find a reference for a proof of this fact,
we include a simple one for the readers convenience.   
\begin{proposition}\label{proposition_monomial}
	For any non-negative integer $n$ and positive number $R$, the following relations hold
	\begin{equation}
	\label{monomial_expression}
	t^{n} = \sum_{j=0}^{n} \frac{B_{n}^{(j)}(R)}{j!} C_{j}(t,R)
	\quad \textnormal{and} \quad
	\int  t^{n} d\mu_{R}(t) = B_{n}(R).
	\end{equation}
\end{proposition}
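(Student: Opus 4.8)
The plan is to establish both identities in \eqref{monomial_expression} simultaneously, since the second follows from the first by integrating against $\mu_R$ and using the fact that $C_0(t,R)=1$ has $\int C_j(t,R)\,d\mu_R(t) = 0$ for $j\geq 1$ by orthogonality (and $\int 1\, d\mu_R = 1$), which collapses the sum to the $j=0$ term $B_n^{(0)}(R)/0!\cdot 1 = B_n(R)$. So the heart of the matter is the expansion $t^n = \sum_{j=0}^n \frac{B_n^{(j)}(R)}{j!} C_j(t,R)$.

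First I would observe that $\{C_j(.,R)\}_{j=0}^n$ is a basis of the space of polynomials of degree at most $n$, so there are unique coefficients $c_{n,j}$ with $t^n = \sum_{j=0}^n c_{n,j} C_j(t,R)$, and the task is to show $c_{n,j} = B_n^{(j)}(R)/j!$. I would prove this by induction on $n$ using the three-term recurrence \eqref{recurrence}, rewritten as $C_{n+1}(t,R) = (t - R - n) C_n(t,R) - nR\,C_{n-1}(t,R)$. Multiplying the inductive hypothesis for $t^n$ by $t$ and using the recurrence to express $t\,C_j(t,R) = C_{j+1}(t,R) + (R+j)C_j(t,R) + jR\,C_{j-1}(t,R)$, one collects the coefficient of each $C_j(t,R)$ in $t^{n+1}$ and obtains a recurrence of the form
\begin{equation*}
c_{n+1,j} = c_{n,j-1} + (R+j)c_{n,j} + (j+1)R\,c_{n,j+1}.
\end{equation*}
It then remains to check that $d_{n,j} := B_n^{(j)}(R)/j!$ satisfies the same recurrence and the same initial data ($c_{0,0}=d_{0,0}=1$). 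Using the Touchard recurrence \eqref{Bell_recurrence}, $B_{n+1}(x) = x\bigl(B_n(x) + B_n^{(1)}(x)\bigr)$, differentiating $j$ times via the Leibniz rule gives $B_{n+1}^{(j)}(x) = x\bigl(B_n^{(j)}(x)+B_n^{(j+1)}(x)\bigr) + j\bigl(B_n^{(j-1)}(x)+B_n^{(j)}(x)\bigr)$; dividing by $j!$ and evaluating at $x=R$ yields exactly the displayed recurrence for $d_{n,j}$. An alternative, perhaps cleaner, route is the generating-function identity $\sum_{n\geq 0} C_n(t,R)\,\frac{z^n}{n!} = e^{-Rz}(1+z)^t$ (equivalently $\sum C_n(t,R)z^n/n! = (1+z)^t e^{-Rz}$), from which one multiplies by $e^{Rz}$, expands $e^{Rz}$ as the exponential generating function of the powers that build $B_n$, and reads off coefficients; the connection to $B_n^{(j)}$ comes from $\sum_n B_n(x)\frac{z^n}{n!} = e^{x(e^z-1)}$, so that $B_n^{(j)}(R)$ appears as the $j$-th Taylor coefficient in $R$.

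The main obstacle I anticipate is purely bookkeeping: matching the index shifts in the two recurrences cleanly, in particular handling the boundary terms $j=0$ and $j=n$ (where $c_{n,j\pm 1}$ or $B_n^{(j+1)}$ vanish for degree reasons) without sign or off-by-one errors. Nothing deep is required — all the structural input is the three-term recurrence \eqref{recurrence} and the Touchard recurrence \eqref{Bell_recurrence}, both already available — so I would keep the writeup short, present the induction step as a single aligned computation, and then note the integrated identity as an immediate corollary of orthogonality.
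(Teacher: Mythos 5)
Your proposal is correct and follows essentially the same route as the paper: induction on $n$ using the three-term recurrence for the Poisson-Charlier polynomials to get the recurrence $c_{n+1,j} = c_{n,j-1} + (R+j)c_{n,j} + (j+1)Rc_{n,j+1}$, matching it against the $j$-fold derivative of the Touchard recurrence, and then deducing the moment identity from orthogonality. The alternative generating-function route you mention is a genuine variation, but your primary argument coincides with the paper's proof.
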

\begin{proof}
	The proof of the left identity in (\ref{monomial_expression}) proceeds by induction 
	on the integer $n$. The identity being trivial for $n=0$, assume that it holds for 
	any $k \leq n$. The three-term recurrence relation (\ref{recurrence}) yields
	\begin{equation*}
	t^{n+1}= \sum_{j=0}^{n}  \frac{B_{n}^{(j)}(R)}{j!} t C_{j}(t;R) 
	=\sum_{j=0}^{n+1} a_{j} C_{j}(t,R), 
	\end{equation*}
	with 
	\begin{equation*}
	a_{j} = \frac{B_{n}^{(j-1)}(R)}{(j-1)!}+(R+j) \frac{B_{n}^{(j)}(R)}{j!} 
	+(j+1) R \frac{B_{n}^{(j+1)}(R)}{(j+1)!}
	\quad \textnormal{for}  \quad j \geq 0, \; \textnormal{with} \; B^{(-1)}_{n} \equiv 0. 
	\end{equation*}  
	Form the recurrence equation (\ref{Bell_recurrence}) of Touchard polynomials, we have
	\begin{equation*}
	B_{n+1}^{(j)}(R) = j B_{n}^{(j-1)}(R) + (j+R) B_{n}^{(j)}(R) + R B_{n}^{(j+1)}(R).
	\end{equation*}
	Thus, for $j=0,1,\ldots,n$, $a_{j} = B_{n+1}^{(j)}(R)/j!$. The right identity in  
	(\ref{monomial_expression}) is a direct consequence of the left identity and of the 
	orthogonality of the Poisson-Charlier polynomials. This concludes the proof.
\qed
\end{proof}

\begin{corollary}
	\label{hintegral}	
	For any positive integer $p$, the polynomials $\mathcal{H}_{2p-1}(.;R)$ 
	defined in (\ref{hpolynomial}) can be expressed as 
	\begin{equation}
	\label{hGauss}
	\mathcal{H}_{2p-1}(x;R) = \int  (x-t)^{2p-1} d\mu_{R}(t) = \sum_{k=1}^{p} \omega_k (x - \lambda_{k,p})^{2p-1},
	\end{equation}
	where $\lambda_{1,p}<\lambda_{2,p}<\ldots<\lambda_{p,p}$ are the zeros of the Poisson-Charlier 
	polynomial $C_{p}(.,R)$ and $(\omega_1,\omega_2,\ldots,\omega_p)$ are the positive 
	weights of the $p$-point Gaussian quadrature with respect to the measure $\mu_{R}$. 
\end{corollary}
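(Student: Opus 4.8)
The plan is to establish the two equalities in (\ref{hGauss}) separately: first the integral representation of $\mathcal{H}_{2p-1}(\cdot;R)$, and then its evaluation by the $p$-point Gaussian quadrature rule.

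For the first equality, I would simply expand $(x-t)^{2p-1}$ by the binomial theorem in the variable $t$ and integrate term by term against $\mu_R$, which is legitimate because the sum is finite and, by Proposition \ref{proposition_monomial}, every moment $\int t^j\,d\mu_R(t)=B_j(R)$ is finite. This gives
\[
\int (x-t)^{2p-1}\,d\mu_R(t)=\sum_{k=0}^{2p-1}(-1)^{2p-1-k}\binom{2p-1}{k}B_{2p-1-k}(R)\,x^k,
\]
which is exactly the defining expression (\ref{hpolynomial}) of $\mathcal{H}_n(x;R)$ with $n=2p-1$, since $(-1)^{2p-1-k}=(-1)^{n-k}$. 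Hence $\mathcal{H}_{2p-1}(x;R)=\int (x-t)^{2p-1}\,d\mu_R(t)$.

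For the second equality, I would invoke the standard theory of Gaussian quadrature for the positive discrete measure $\mu_R$ (see e.g.\ \cite{chihara}): since $\mu_R$ has infinitely many support points and all finite moments, for each $p\geq 1$ there is a unique $p$-point quadrature rule whose nodes are the zeros $\lambda_{1,p}<\lambda_{2,p}<\cdots<\lambda_{p,p}$ of the orthogonal polynomial $C_p(\cdot,R)$ and whose weights $\omega_1,\ldots,\omega_p$ are positive, and which integrates exactly every polynomial of degree at most $2p-1$. Fixing $x$, the function $t\mapsto(x-t)^{2p-1}$ is a polynomial of degree $2p-1$ in $t$, so exactness yields $\int (x-t)^{2p-1}\,d\mu_R(t)=\sum_{k=1}^{p}\omega_k (x-\lambda_{k,p})^{2p-1}$. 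Combining this with the first equality gives (\ref{hGauss}).

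I do not expect a genuine obstacle here; the statement is essentially a combination of the moment identity of Proposition \ref{proposition_monomial} with classical Gaussian quadrature. The only points deserving a word of care are the (trivial) interchange of the finite binomial sum with the integral, which is justified by finiteness of the moments, and the sign bookkeeping needed to match the binomial expansion verbatim with the definition (\ref{hpolynomial}).
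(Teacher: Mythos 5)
Your proof is correct and follows essentially the same route as the paper: the first equality is obtained by the binomial expansion of $(x-t)^{2p-1}$ together with the moment identity $\int t^j\,d\mu_R(t)=B_j(R)$ from Proposition \ref{proposition_monomial}, and the second is the exactness of $p$-point Gaussian quadrature with respect to $\mu_R$ on polynomials in $t$ of degree at most $2p-1$.
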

\begin{proof}
	The first identity in (\ref{hGauss}) is valid when we replace $2p-1$ by any integer $n$. Indeed, 
	according to Proposition \ref{proposition_monomial}, we have
	\begin{equation*}
	\begin{split}
	\int (x-t)^{n} d\mu_{R}(t) & =\sum_{k=0}^n (-1)^{n-k}\binom{n}{k}x^k \int t^{n-k} d\mu_{R}(t)  \\
	& =\sum_{k=0}^n (-1)^{n-k}\binom{n}{k} x^k B_{n-k}(R)=\mathcal{H}_{n}(x;R).
	\end{split}
	\end{equation*}
	The second identity in (\ref{hGauss}) is nothing but the $p$-point Gaussian quadrature 
	with respect to the measure $\mu_{R}$ applied to the polynomial $P(x) = \int (x-t)^{2p-1} d\mu_{R}(t)$. 
\qed
\end{proof}

\begin{remark}
Writing $\mathcal{H}_{n}(x;R) = \sum_{i=1}^{s} \alpha_i (x - m_i)^{n}$
is equivalent to saying that for any polynomial $P$ of degree at most $n$ we have 
\begin{equation*}
\int P(t) d\mu_{R}(t) =  \sum_{i=1}^{s} \alpha_i  P(m_{i}).
\end{equation*}
Therefore, according to Corollary \ref{HR}, the optimal threshold factor $R_{m,n}$ is  
{\it{the maximum of the real numbers $R$ for which the corresponding Poisson-Charlie measure
$\mu_{R}$ admits a positive quadrature with non-negative integer nodes that are smaller 
or equal to $m$}}.
\end{remark}

We shall often use the following theorem which we state in it full generality. Let $\mu$ be a finite positive measure 
over the real line with finite moments of all order.  Denote by $\pi_1,\pi_2,\ldots,\pi_n,...$,
the orthogonal polynomials associated with $\mu$. Denote by $\mathbb{P}_n$ the space of polynomials 
of degree at most $n$.

\begin{theorem}
Given a positive quadrature rule with respect to the measure $\mu$ which is exact in $\mathbb{P}_{2p-1}$, 
\begin{equation}
\label{generalquadrature}
\int P(t) d\mu = \sum_{k=0}^{s} \beta_k P(\rho_k) 
\quad \textnormal{for any} \quad P \in \mathbb{P}_{2n-1},
\end{equation} 
with  $s \geq p$, $0\leq\rho_1<\rho_2<\ldots<\rho_s$ and $\beta_j >0, j=1,2,\ldots,s$. Then 
$\rho_s \geq t_{p}$ and $\rho_1 \leq t_1$ where $t_1 < t_2 < \ldots < t_p$ are the zeros of
the orthogonal polynomial $\pi_n$. Moreover, $\rho_s = t_{p}$ if and only if 
the quadrature (\ref{generalquadrature}) coincides with the Gaussian quadrature. 
\end{theorem}     
\begin{proof}
Consider the polynomial 
\begin{equation}
Q(t) := \pi_n(t) \frac{\pi_n(t)}{t - t_p} = (t-t_p) \prod_{i=1}^{p-1}(t - t_i)^2. 
\end{equation}
Applying (\ref{generalquadrature}) and invoking orthogonality, we obtain 
\begin{equation}
\label{majoration1}
\int Q(t) d\mu =  \sum_{k=1}^{s} \beta_k (t_1-\rho_k)^2 
\ldots (t_{p-1}-\rho_k)^2 (t_{p}-\rho_k)=0.
\end{equation}
Therefore, there exists an integer $k \in \{1,2,\ldots,s\}$ such that 
$\rho_k \geq t_{p}$. In particular, we have $\rho_s \geq t_{p}$. 	
If $\rho_s = t_{p}$ then from (\ref{majoration1}) we remark that 
for $k=1,2,\ldots,s-1$, $(\rho_k-t_{1}) \ldots (\rho_k-t_{p-1}) =0$. 
In other words, $\rho_{1},\rho_{2},\ldots,\rho_{s-2}$ and $\rho_{s-1}$ are zeros of the polynomial 
$\psi(t) = (t-t_{1})(t-t_{2}) \ldots (t-t_{p-1})$. Thus, we necessarily 
have $s-1 = p-1$ and $\rho_{k} = t_{k}$ for $k=1,2,\ldots,p-1$. Therefore, the quadrature 
(\ref{generalquadrature}) coincides with the Gaussian quadrature. To prove that  
$t_{1} \geq \rho_1$ we proceed as follows: Define the polynomial 
\begin{equation*}
S(t) = \pi_n(t) \frac{\pi_n(t)}{t - t_1} = (t-t_1) \prod_{i=2}^{p}(t - t_i)^2. 
\end{equation*}
Applying (\ref{generalquadrature}) and invoking orthogonality, we obtain
\begin{equation*}
\int S(t) d\mu = \sum_{k=1}^{s} \beta_k (t_{2}-\rho_k)^2 \ldots (t_{p}-\rho_k)^2  (t_{1}-\rho_k)=0.
\end{equation*}
Therefore, there exists an integer $k \in \{1,2,\ldots,s\}$ such that 
$\rho_k \leq t_{1}$. In particular, we have $\rho_1 \leq t_{1}$.
\qed
\end{proof}

Applying the previous Theorem to the Poisson-Charlier measures and using the integral representation (\ref{hGauss}) of the polynomial 
$\mathcal{H}_{2p-1}(.;R)$ we readily obtain the following. 

\begin{corollary}
\label{last-term}
Let us assume that the polynomial $\mathcal{H}_{2p-1}(.;R)$ is written as
\begin{equation}
\label{g-gaussian}
\mathcal{H}_{2p-1}(x;R) = \sum_{k=1}^{s} \beta_k (x - \rho_k )^{2p-1},
\end{equation}
with $s \geq p$, $0\leq\rho_1<\rho_2<\ldots<\rho_s$ and $\beta_j >0, j=1,2,\ldots,s$. 
Let $\lambda_{p,p}$ the largest zero of the Poisson-Charlier polynomial $C_{p}(.,R)$. 
Then $\rho_s \geq \lambda_{p,p}$ with equality if and only if $s=p$ and the representation 
(\ref{g-gaussian}) coincides with the one in (\ref{hGauss}). 
Moreover, $\rho_1 \leq \lambda_{1,p}$ where $\lambda_{1,p}$ is the smallest zero of
$C_{p}(.,R)$.    
\end{corollary}

We shall need the following result.  

\begin{corollary}
\label{main-corollary1}
Let $R_{max}$ be the unique real number such that the largest zero of the 
Poisson-Charlier polynomial $C_{p}(.,R_{max})$ is equal to $m$. 
Then $R_{m,2p-1} \leq R_{max}$ with equality if and only if all the zeros 
of $C_{p}(.,R_{max})$ are integers.
\end{corollary}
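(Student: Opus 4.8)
The plan is to derive this corollary as a direct consequence of Corollary~\ref{hintegral} and Proposition~\ref{last-term}, together with the characterization of $R_{m,2p-1}$ from the Remark following Corollary~\ref{hintegral}. First I would record the monotonicity fact underpinning the definition of $R_{max}$: for the monic Poisson-Charlier polynomial $C_p(.,R)$, the largest zero $\lambda_{p,p}(R)$ is a continuous, strictly increasing function of $R>0$ that tends to $0^+$ (or at least below $m$) as $R\to 0^+$ and to $+\infty$ as $R\to\infty$; hence there is a unique $R_{max}$ with $\lambda_{p,p}(R_{max})=m$. Monotonicity in $R$ can be argued from the three-term recurrence (\ref{recurrence}) — increasing $R$ shifts the Jacobi matrix entries $R+n$ and $nR$ upward, so by the interlacing/monotonicity theory of Jacobi matrices the eigenvalues increase — or by a standard Hellmann--Feynman-type derivative argument. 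I would state this as a short lemma or simply assert it with a one-line justification.

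Next, the upper bound $R_{m,2p-1}\le R_{max}$. Suppose $R>R_{max}$, so $\lambda_{p,p}(R)>m$. By Corollary~\ref{hintegral}, $\mathcal{H}_{2p-1}(x;R)$ has the Gaussian representation $\sum_{k=1}^p \omega_k(x-\lambda_{k,p}(R))^{2p-1}$ with positive weights; more to the point, by the Remark, $R_{m,2p-1}$ is the largest $R$ for which $\mu_R$ admits a positive quadrature with \emph{integer} nodes, all $\le m$, exact on degree $\le 2p-1$. Apply Proposition~\ref{last-term} to \emph{any} such representation $\mathcal{H}_{2p-1}(x;R)=\sum_k \beta_k(x-\rho_k)^{2p-1}$ with $\beta_k>0$ and $\rho_k$ nonnegative: it gives $\rho_s\ge \lambda_{p,p}(R)>m$, so the largest node exceeds $m$, contradicting the node constraint. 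Hence no such representation exists for $R>R_{max}$, giving $R_{m,2p-1}\le R_{max}$.

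Finally, the equality characterization. If all zeros $\lambda_{k,p}(R_{max})$ are integers, then the Gaussian representation from Corollary~\ref{hintegral} at $R=R_{max}$ is itself a positive quadrature with integer nodes; its largest node is $\lambda_{p,p}(R_{max})=m\le m$ and the smallest is $\lambda_{1,p}(R_{max})\ge 0$, so all nodes lie in $\{0,1,\dots,m\}$. This exhibits an admissible representation at $R=R_{max}$, so $R_{m,2p-1}\ge R_{max}$, and combined with the upper bound we get equality. Conversely, suppose $R_{m,2p-1}=R_{max}$. Then $\mu_{R_{max}}$ admits a positive integer-node quadrature $\mathcal{H}_{2p-1}(x;R_{max})=\sum_k\beta_k(x-\rho_k)^{2p-1}$, $\beta_k>0$, exact on degree $\le 2p-1$, with $0\le\rho_1<\dots<\rho_s\le m$. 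Since $\rho_s\le m=\lambda_{p,p}(R_{max})$, the inequality $\rho_s\ge\lambda_{p,p}(R_{max})$ from Proposition~\ref{last-term} forces $\rho_s=\lambda_{p,p}(R_{max})$, and the equality clause of Proposition~\ref{last-term} then says $s=p$ and this representation coincides with the Gaussian one of (\ref{hGauss}); in particular $\rho_k=\lambda_{k,p}(R_{max})$ for all $k$, so every zero of $C_p(.,R_{max})$ is an integer. This closes the equivalence.

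The main obstacle is the clean justification of the monotonicity/uniqueness statement defining $R_{max}$ — showing $\lambda_{p,p}(R)$ is strictly increasing in $R$ and its range covers all sufficiently large values (so that $\lambda_{p,p}(R_{max})=m$ has a unique solution for every $m\ge$ the relevant threshold). Everything else is a bookkeeping assembly of results already proved. One should also double-check the edge behavior as $R\to 0^+$: the measure $\mu_R$ degenerates toward $\delta_0$, and one needs the largest zero to drop below $m$ so that $R_{max}>0$ exists; this follows since the moments $B_n(R)\to 0$ for $n\ge 1$, pushing all Gaussian nodes toward $0$.
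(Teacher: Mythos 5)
Your proposal is correct and follows essentially the same route as the paper: take the integer-node representation at $R_{m,2p-1}$ (or at a hypothetical $R>R_{max}$), invoke Proposition~\ref{last-term} to lower-bound the largest node by $\lambda_{p,p}(R)$, and use monotonicity of $\lambda_{p,p}$ in $R$ to derive the contradiction, with the equality case handled by the rigidity clause in Proposition~\ref{last-term}. The only cosmetic differences are that the paper cites \cite{area} for the monotonicity of Poisson--Charlier zeros rather than sketching a Jacobi-matrix or Hellmann--Feynman argument, and you spell out the ``if'' direction of the equality (admissibility of the Gaussian quadrature at $R_{max}$) slightly more explicitly than the paper's condensed phrasing.
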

\begin{proof}
	For a real number $R$, let us denote by $\lambda_{p,p}(R)$ the largest zero of 
	the Poisson-Charlier polynomial $C_{p}(.,R)$. From the definition of $R_{m,2p-1}$, there exist 
	non-negative integers $0 \leq m_{1} < m_{2}<\ldots < m_{s} \leq m$ such that 
	\begin{equation}
	\label{representation2}
	\mathcal{H}_{2p-1}(x;R_{m,2p-1}) = \sum_{k=1}^{s} \beta_k (x-m_k )^{2p-1}
	\end{equation}
	with $\beta_{k} > 0$ for $k=1,2,\ldots,s$ ($s \leq 2p-1$). 
	If $R_{m,2p-1} > R_{max}$ and since the zeros of Poisson-Charlier polynomials are 
	strictly increasing functions of the parameter $R$ (see \cite{area}) , 
	we deduce that $\lambda_{p,p}(R_{m,2p-1}) > \lambda_{p,p}(R_{max}) = m$.
	However, from Proposition \ref{last-term}, we have $m_{s} \geq \lambda_{p,p}(R_{m,2p-1})$. Thus, 
	we obtain $m_{s} >m$ contradicting our initial assumption on $m_s$. 
	Moreover, from Proposition \ref{last-term}, $\lambda_{p,p}(R_{m,2p-1}) = \lambda_{p,p}(R_{max})$ 
	or equivalently $R_{m,2p-1} = R_{max}$ if and only if 
	the two representations (\ref{representation2}) and (\ref{hGauss}) coincide, or equivalently
	the zeros of the polynomial $C_{p}(.,R_{max})$ are integers.
\qed
\end{proof}  	   

\noindent
{\bfseries{Example of Applications: }}
Corollary \ref{main-corollary1} shows that, if for a positive real number $R$ the zeros 
$\lambda_{1,p} < \lambda_{2,p} <\ldots <\lambda_{p,p}$ of the polynomial $C_{p}(.,R)$ are 
integers then
\begin{equation}
\label{general-form}
R_{\lambda_{p,p},2p-1} = R
\quad \textnormal{and} \quad 
\Phi_{\lambda_{p,p},2p-1}(x) = \sum_{k=1}^{p} \omega_{k} 
\left( 1+\frac{x}{R_{\lambda_{p,p},2p-1}}\right)^{\lambda_{k,p}},
\end{equation}
where $\omega_{k}, k=1,2,\ldots,p$ are the weights of the $p$-point Gaussian quadrature 
with respect to the measure $\mu_{R_{\lambda_{p,p},2p-1}}$. As an application, we now prove the 
following theorem which was derived in \cite{kraaPoly} using a technically involved method.

\begin{theorem}
	\label{IntegerCharlier}
	For any integer $m \geq 1$, we have $R_{m,1} = m$ and 
	\begin{equation}
	\label{Rm1}
	\Phi_{m,1}(x) = \left( 1 + \frac{x}{m} \right)^{m}.
	\end{equation}
	For any square integer $m \geq 3$, we have 
	$R_{m,3} = m - \sqrt{m}$ and 
	\begin{equation*}
	\Phi_{m,3}(x) = \frac{\sqrt{m}}{2\sqrt{m}-1} \left( 1 + \frac{x}{m-\sqrt{m}} \right)^{m- 2\sqrt{m}+1}
	+ \frac{\sqrt{m}-1}{2\sqrt{m}-1} \left( 1 + \frac{x}{m-\sqrt{m}} \right)^{m}.
	\end{equation*}
\end{theorem}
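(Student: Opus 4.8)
The plan is to apply the ``Example of Application'' above, which says that whenever the zeros $\lambda_{1,p}<\dots<\lambda_{p,p}$ of $C_p(\cdot,R)$ are all integers, then $R_{\lambda_{p,p},2p-1}=R$ and the optimal threshold polynomial is the Gaussian-quadrature sum $\sum_{k=1}^p\omega_k\bigl(1+\tfrac{x}{R}\bigr)^{\lambda_{k,p}}$. So everything reduces to two explicit instances of Poisson--Charlier polynomials: $p=1$ and $p=2$.

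For $p=1$: from the recurrence in \eqref{recurrence} (or directly), $C_1(t,R)=t-R$, whose unique zero is $\lambda_{1,1}=R$. This is an integer precisely when $R$ is a positive integer, say $R=m$; then $\lambda_{1,1}=m$, so $\lambda_{p,p}=m$, and the single Gauss weight is $\omega_1=\int d\mu_R = 1$. Plugging into \eqref{general-form} gives $R_{m,1}=m$ and $\Phi_{m,1}(x)=\bigl(1+\tfrac{x}{m}\bigr)^m$, which is \eqref{Rm1}.

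For $p=2$: from \eqref{recurrence} with $C_1(t,R)=t-R$ and $C_0=1$ one computes $C_2(t,R)=t^2-(2R+1)t+R^2$, with zeros $\lambda_{1,2},\lambda_{2,2}=R+\tfrac12\pm\sqrt{R+\tfrac14}$. I would then check: these are both integers iff $4R+1$ is an odd perfect square, and in that case, writing $R+\tfrac14=(\sqrt R+\tfrac12)^2$ forces $\sqrt R\in\mathbb{Z}$, i.e. $R=m$ is a perfect square with $m\ge 3$ (so that $R\ge 3>0$ and the smaller zero is nonnegative). Then $\lambda_{1,2}=m-\sqrt m$ and $\lambda_{2,2}=m-\sqrt m+2\sqrt m = m+\sqrt m$; wait — I need $\lambda_{2,2}=m$ for the formula $R_{m,3}=R$. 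Let me recompute: the intended statement has $R_{m,3}=m-\sqrt m$, so the relevant parameter is $R=m-\sqrt m$, not $R=m$. Indeed, setting $R=m-\sqrt m$ gives $4R+1=4m-4\sqrt m+1=(2\sqrt m-1)^2$, an odd square, and the zeros become $R+\tfrac12\pm\sqrt{R+\tfrac14}=m-\sqrt m+\tfrac12\pm\bigl(\sqrt m-\tfrac12\bigr)$, i.e. $\lambda_{1,2}=m-2\sqrt m+1$ and $\lambda_{2,2}=m$. So $\lambda_{p,p}=m$, both zeros are integers, and \eqref{general-form} yields $R_{m,3}=m-\sqrt m$ with $\Phi_{m,3}(x)=\omega_1\bigl(1+\tfrac{x}{m-\sqrt m}\bigr)^{m-2\sqrt m+1}+\omega_2\bigl(1+\tfrac{x}{m-\sqrt m}\bigr)^{m}$.

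It remains to compute the two Gauss weights $\omega_1,\omega_2$ for the $2$-point Gaussian quadrature with respect to $\mu_{R}$, $R=m-\sqrt m$. I would use the standard fact that $\omega_k=\bigl(\sum_{j=0}^{1}C_j(\lambda_{k,2},R)^2/(j!R^j)\bigr)^{-1}$, or more simply solve the two linear moment equations $\omega_1+\omega_2=\int 1\,d\mu_R=B_0(R)=1$ and $\omega_1\lambda_{1,2}+\omega_2\lambda_{2,2}=\int t\,d\mu_R=B_1(R)=R$ (using \eqref{monomial_expression}). With $\lambda_{1,2}=R-\sqrt m+\tfrac12$-type values substituted, this $2\times2$ system gives $\omega_1=\dfrac{\lambda_{2,2}-R}{\lambda_{2,2}-\lambda_{1,2}}=\dfrac{\sqrt m}{2\sqrt m-1}$ and $\omega_2=\dfrac{R-\lambda_{1,2}}{\lambda_{2,2}-\lambda_{1,2}}=\dfrac{\sqrt m-1}{2\sqrt m-1}$, matching the claimed formula. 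The only mild obstacle is the bookkeeping in identifying the correct value of the parameter $R$ (it is $m-\sqrt m$, not $m$) and keeping the labels of the two zeros straight; once that is done, everything is an immediate substitution into \eqref{general-form} together with the elementary moment computation for the weights.
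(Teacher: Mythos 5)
Your proof is correct and follows the same route as the paper: apply \eqref{general-form} to $C_1(t,m)=t-m$ for $n=1$ and to $C_2\bigl(t,m-\sqrt m\bigr)=\bigl(t-(m-2\sqrt m+1)\bigr)(t-m)$ for $n=3$, where the zeros are integers with largest value $m$. The only elaboration beyond the paper is the explicit computation of the two Gaussian weights from the moment equations $\omega_1+\omega_2=1$ and $\omega_1\lambda_{1,2}+\omega_2\lambda_{2,2}=R$, which the paper leaves to the reader; your transient false start with $R=m$ is self-corrected and does not affect the final argument.
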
 
\begin{proof}
	For any non-negative integer $m$, we have $C_{1}(t,m) = t - m$. Thus, according to  (\ref{general-form}) with 
	$\lambda_{1,1} = m$, we have $R_{m,1} =m$. The expression of $\Phi_{m,1}$ in (\ref{Rm1}) is a direct consequence
	(\ref{general-form}).
	The degree $2$ Poisson-Charlier polynomial is given by $C_{2}(t,R) = t^{2} -(2R+1)t + R^{2}$. Thus, 
	for $R = m - \sqrt{m}$ with  $m\geq 3$ is a square integer, we have 
	\begin{equation*}
	C_{2}(t,R) = t^{2} - \left( 2(m - \sqrt{m})+1 \right) t + (m - \sqrt{m})^2 = \left(t-(m- 2\sqrt{m}+1)\right)(t - m).
	\end{equation*}      
	Thus, for these specific values of the parameter $R$, the zeros of $C_{2}(.,R)$ are integers with $m$ as the largest 
	one. Therefore, according to (\ref{general-form}), we have $R_{m,3} = m - \sqrt{m}$. 
	The expression of $\Phi_{m,3}$ is a direct consequence of (\ref{general-form}) once the weights 
	of the $2$-point Gaussian quadrature with respect to $\mu_{m-\sqrt{{m}}}$
	are computed explicitly.  
\qed   
\end{proof}

\begin{remark}
	It is an interesting problem to find all the real numbers $R$ and positive integers $p$ for which 
	all the zeros of the Poisson-Charlier polynomial $C_{p}(.,R)$ are integers. For these cases, 
	the optimal threshold factors and their associated optimal polynomials are easily computed 
	via Gaussian quadratures. It may be possible that the only cases for which all the zeros of 
	$C_{p}(.,R)$ are integers are actually the cases already cited in Theorem \ref{IntegerCharlier}.
\end{remark}
\smallskip

We are now in a position to prove Theorem \ref{MainTheorem1} (see Introduction).

\smallskip 

\noindent
{\bf{Proof of Theorem \ref{MainTheorem1}: }} As is well known, the Poisson-Charlier 
polynomials are linked to the generalized Laguerre polynomials \cite{szego} via the relation 
\begin{equation}
\label{charlier-laguerre}
C_{p}(x,R) = p ! L_{p}^{(x-p)}(R).
\end{equation}
From Corollary \ref{main-corollary1}, $R_{m,2p-1} \leq R_{max}$ where $R_{max}$ is the unique real number 
for which the largest zero of $C_{p}(.,R_{max})$ is equal to $m$. In other words, and taking into account 
that the zeros of $C_{p}(.,R)$ are increasing functions on the parameter $R$, $R_{max}$ is the smallest real
number satisfying $C_{p}(m,R_{max})=0$. Thus, due to (\ref{charlier-laguerre}), $R_{max}$ is the smallest real number 
such that $L_{p}^{(m-p)}(R)=0$, \ie $R_{max} = \ell_{p}^{(m-p)}$. This shows inequality (\ref{UpperBound}). 
The claim about equality in (\ref{UpperBound}) stated in Theorem \ref{MainTheorem1} 
is a direct consequence of Corollary \ref{main-corollary1}.      
\section{Lower bounds for the optimal threshold factors}

The good quality of the sharp upper bound (\ref{UpperBound}) to the optimal threshold 
factor $R_{m,2p-1}$ (see Table \ref{tab:table1}) suggests the possibility of 
finding an equally satisfying lower bound for $R_{m,2p-1}$. This section is an attempt to 
finding such lower bounds. The results of this section are based on the following 
characterization of the optimal threshold factors.

\begin{theorem}
\label{FarkasLemma}
Let $m \geq n$ be two positive integers. The SSP coefficient $R_{m,n}$ is the maximum of the real numbers $R$ 
with the property that if a polynomial $f$ of degree at 
most $n$ is such that $f(j) \geq 0$ for $j=0,1,\ldots,m$, then
\begin{equation*}
\int f(t) d\mu_{R}(t) \geq 0.
\end{equation*}
\end{theorem}
\begin{proof}
Let $R$ be a real number such that $R \leq R_{m,n}$. According to Corollary \ref{HR}, 
the polynomial $\mathcal{H}_{n}(.;R)$ has a representation of the form 
\begin{equation*}
\mathcal{H}_{n}(x;R) = \sum_{k=1}^{s} \beta_k (x - m_k )^{n},
\end{equation*} 
with $0\leq m_{1} < m_{2} < \ldots <m_{s} \leq m$ and $\beta_{k} \geq 0$ for $k=0,1,\ldots,s$.
In other words, for any polynomial $f$ of degree at most $n$, we have (see Remark 4.1)
\begin{equation}
\label{positive-integral}
\int f(t) d\mu_{R}(t) = \sum_{k=1}^{s} \beta_{k} f(m_{k}).
\end{equation}
In particular, if the polynomial $f$ is such that $f(j)\geq 0$ for $j=0,1,\ldots,m,$  
then by (\ref{positive-integral}),  $\int f(t) d\mu_{R}(t) \geq 0$.
Conversely, let us assume that $R$ is such that, for any polynomial $f$ of degree at most $n$ such that $f(j) \geq 0$ for 
$j=0,1,\ldots,m$, we have $\int f(t) d\mu_{R}(t) \geq 0$. Let us write $f$ as 
$f(t) = \sum_{k=0}^{n} \gamma_{k} t^{k}$. Set
$\gamma := (\gamma_{0},\gamma_{1},\ldots,\gamma_{n})^{T}$ and by $A$ the $(n+1,m+1)$ matrix 
$A ={(a_{ij})}$ with $a_{ij}= j^i$ for $ i=0,1,\ldots,n$ and $j=0,1,\ldots,m$. Clearly, we have 
$\gamma^{T} A = \left( f(0),f(1),\ldots,f(m)\right)$.  Moreover, if we denote by 
$b = \left( B_{0}(R),B_{1}(R),\ldots,B_{n}(R)\right)^{T}$, then using the fact that the moments of the 
Poisson-Charlier measures are Touchard polynomials, we obtain 
$\gamma^{T} b = \int f(t) d\mu_{R}(t)$. Thus our initial hypothesis can be restated as: 
for any $\gamma \in \mathbb{R}^{n+1}$ such that $\gamma^{T} A \geq 0$ we have $\gamma^{T} b \geq 0$. 
Therefore, by Farkas lemma, this is equivalent to the existence of a vector 
$\alpha = \left(\alpha_{0},\alpha_{1},\ldots,\alpha_{m}\right)^{T} \geq 0$ 
such that $A\alpha = b$, which in turn is equivalent to the representation 
of the polynomial $\mathcal{H}_{n}(.;R)$ as  
\begin{equation*}
\mathcal{H}_{n}(x;R) = \sum_{k=0}^{m} \alpha_k (x - k )^{n}.
\end{equation*}  
Therefore, $R \leq R_{m,n}$. This concludes the proof.
\qed
\end{proof}

We shall need the following definition.

\begin{definition}
A non-zero real polynomial $P$ is said to be admissible if it is non-negative 
on the set of the integers $\mathbb{Z}$, i.e.; 
\begin{equation*}
P(j) \geq 0 \quad \textnormal{for any} \quad j \in \mathbb{Z}.
\end{equation*} 
\end{definition}

Let $p$ be a positive integer. Denote by $\mathcal{C}_{p}$ the set of all admissible 
polynomials of degree at most $2p-2$. Define the following quantity 
\begin{equation}
\label{TauDefinition}
\tau_{p} := \sup_{R>0} \sup_{P\in \mathcal{C}_{p}}
\left( \frac{\int t P(t) d\mu_{R}(t)}{\int P(t) d\mu_{R}(t)} - \lambda_{p,p}(R) \right),
\end{equation}
where $ \lambda_{p,p}(R)$ is the largest zero of the Poisson-Charlier polynomial $C_p(.,R)$.
In the rest of this section, we shall prove that the quantity $\tau_{p}$ is bounded above, 
and it is even smaller that $p-1$. The relevancy of the quantity $\tau_p$ in establishing a lower bound 
for the linear SSP coefficient is the following theorem.     

\begin{theorem}
\label{lowerbound1}
For any integers $m \geq 2p-1$, we have
\begin{equation*}
R_{m,2p-1} \geq \ell_{p}^{(m-p-\tau_{p})}
\end{equation*}
provided that $m-p-\tau_{p} >-1$.
\end{theorem}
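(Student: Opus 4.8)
The plan is to use Theorem~\ref{FarkasLemma} as the bridge: in order to show $R_{m,2p-1} \geq \ell_{p}^{(m-p-\tau_p)}$, it suffices to set $R := \ell_{p}^{(m-p-\tau_p)}$ and verify that for every polynomial $f$ of degree at most $2p-1$ with $f(j)\geq 0$ for $j=0,1,\ldots,m$, one has $\int f\,d\mu_R \geq 0$. The first step is to recall the link $C_p(x,R) = p!\,L_p^{(x-p)}(R)$ from (\ref{charlier-laguerre}), which gives: $R = \ell_p^{(m-p-\tau_p)}$ is the smallest root of $L_p^{(m-p-\tau_p)}(R)=0$, i.e.\ the smallest $R$ with $C_p(m-\tau_p,R)=0$. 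Since the zeros of $C_p(\cdot,R)$ increase with $R$, this means the largest zero $\lambda_{p,p}(R)$ of $C_p(\cdot,R)$ equals $m-\tau_p$ for this choice of $R$. So the Gaussian quadrature for $\mu_R$ has its largest node exactly at $m-\tau_p$.

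Next I would analyze the structure of a nonnegative-on-$\{0,\ldots,m\}$ polynomial $f$ of degree $\leq 2p-1$. The key idea is to split off the last Gaussian node: write $f(t) = (\lambda_{p,p}(R)-t)\,g(t) + c$ where one chooses the representation so that $g$ has degree $\leq 2p-2$ and is admissible (nonnegative on $\mathbb{Z}$), and $c$ captures the remaining nonnegative contribution. More precisely, one uses the classical decomposition of a polynomial nonnegative on an interval (or on a lattice of points) into a sum where the ``bad'' factor is linear in $t$; the constraint $\lambda_{p,p}(R) = m-\tau_p \leq m$ is what makes $(\lambda_{p,p}(R)-t)$ have the right sign at the relevant integer nodes. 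Then $\int f\,d\mu_R = \int(\lambda_{p,p}(R)-t)g(t)\,d\mu_R + c\int d\mu_R$, and since $\mu_R$ is a probability measure the second term is $c\cdot(\text{positive})$, handled by nonnegativity of $c$. For the first term, one invokes the definition of $\tau_p$ in (\ref{TauDefinition}): for the admissible $g\in\mathcal{C}_p$,
\begin{equation*}
\frac{\int t\,g(t)\,d\mu_R(t)}{\int g(t)\,d\mu_R(t)} \leq \lambda_{p,p}(R) + \tau_p,
\end{equation*}
but here we have chosen $R$ precisely so that $\lambda_{p,p}(R)+\tau_p = m$; the sup in (\ref{TauDefinition}) is over all $R>0$, so the bound $\int t\,g\,d\mu_R \leq (\lambda_{p,p}(R)+\tau_p)\int g\,d\mu_R$ gives $\int(\lambda_{p,p}(R)+\tau_p - t)g\,d\mu_R \geq 0$. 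That is \emph{almost} what is needed but with $\lambda_{p,p}(R)+\tau_p=m$ in place of $\lambda_{p,p}(R)$; I expect the actual argument to peel off more than one node, or to iterate, arranging that the coefficient multiplying $g$ in the decomposition of $f$ is exactly $m-t$ rather than $\lambda_{p,p}(R)-t$.

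The main obstacle, and the step I would spend the most care on, is the \emph{decomposition step}: showing that every degree-$\leq 2p-1$ polynomial $f$ with $f(0),\ldots,f(m)\geq 0$ can be written as $f(t) = (m-t)\,g(t) + (\text{explicitly nonnegative part})$ with $g\in\mathcal{C}_p$ (admissible, degree $\leq 2p-2$) and the nonnegative part having nonnegative $\mu_R$-integral for free. One natural route: since $f\geq 0$ on $\{0,1,\ldots,m\}$ and $\deg f\leq 2p-1$ is odd, $f$ cannot stay nonnegative as $t\to-\infty$ unless its leading coefficient is $\leq 0$; the behaviour of $f$ to the right of $m$ is where $(m-t)$ changes the sign bookkeeping. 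I would try Lagrange-type interpolation at $0,1,\ldots,m$ together with a divided-difference remainder, or a Markov--Lukács style positivity representation adapted to the lattice, to extract the factor $(m-t)$ cleanly while keeping the quotient admissible on all of $\mathbb{Z}$. If a single factor does not suffice, the fallback is an inductive peeling of the Gaussian nodes $\lambda_{1,p}(R)<\cdots<\lambda_{p,p}(R)$, at each stage reducing the degree by one and invoking a $\tau_j$-type estimate; establishing that these successive quotients remain admissible is where the real work lies, and getting the bookkeeping to collapse to the single quantity $\tau_p$ (rather than a sum $\tau_1+\cdots+\tau_p$) is the delicate point. Once the decomposition is in hand, combining it with Theorem~\ref{FarkasLemma} and the identification $\lambda_{p,p}(\ell_p^{(m-p-\tau_p)}) = m-\tau_p$ closes the proof.
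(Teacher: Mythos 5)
Your high-level framework is right: invoke Theorem~\ref{FarkasLemma} with $R = \ell_p^{(m-p-\tau_p)}$, use $C_p(x,R)=p!\,L_p^{(x-p)}(R)$ to identify $\lambda_{p,p}(R)=m-\tau_p$, and exploit the definition of $\tau_p$. But the decomposition you propose, $f(t)=(m-t)g(t)+(\text{nonnegative})$ with $g$ admissible, is not the one the paper uses, and your own doubts about whether it closes cleanly are well founded. A linear peel does not naturally produce an admissible quotient $g$ of degree $\leq 2p-2$, and iterating leads exactly to the $\tau_1+\cdots+\tau_p$ bookkeeping problem you flag at the end; you never resolve that obstacle, so the proof has a genuine gap at the decisive step.

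The paper's actual route is structurally different. It factors $f=f_1f_2$, where $f_1$ collects (in pairs) all the real roots of $f$ lying in $[0,m]$, so $f_1$ is admissible of even degree $2s$ and $f_2$ is strictly positive on $[0,m]$. It then forms the Christoffel transform $\tilde\mu_R$ of $\mu_R$ with weight modifier $f_1$, and writes
\begin{equation*}
\int f\,d\mu_R \;=\; \int f_2\,d\tilde\mu_R \;=\; \sum_{j=1}^{p-s}\beta_j\,f_2(\tilde\lambda_j),\qquad \beta_j>0,
\end{equation*}
by Gauss quadrature in the transformed measure; nonnegativity then reduces to showing $\tilde\lambda_{p-s}\leq m$. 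That bound is obtained in one stroke from the definition of $\tau_p$: one sets $Q(t)=\prod_{j<p-s}(t-\tilde\lambda_j)^2\,f_1(t)$, which is admissible of degree exactly $2p-2$, and orthogonality of $\tilde\pi_{p-s}$ gives $\int t\,Q\,d\mu_R\big/\int Q\,d\mu_R=\tilde\lambda_{p-s}$; if $\tilde\lambda_{p-s}>m$, this exceeds $\lambda_{p,p}(R)+\tau_p$ and contradicts~(\ref{TauDefinition}). So the trick is not to peel a factor $(m-t)$ off $f$, but to apply the $\tau_p$ bound once, to a different admissible polynomial $Q$ manufactured from the transformed quadrature nodes and $f_1$. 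This avoids any summation of $\tau_j$'s because $\tau_p$ is defined as a supremum over all admissible polynomials of degree $\leq 2p-2$, and $Q$ is chosen to make $\int tQ/\int Q$ hit the quantity you actually need to control.

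So: right bridge (Farkas and $\tau_p$), wrong decomposition, and the missing ideas are (i) the even-degree zero-factorization $f=f_1f_2$, (ii) passing to the Christoffel-transformed measure and its Gauss quadrature, and (iii) the ad hoc admissible polynomial $Q$ that lets a single application of~(\ref{TauDefinition}) bound the largest transformed node by $m$.
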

\begin{proof}
Let $\bar{R}$ be the unique real number such that $\lambda_{p,p}(\bar{R}) = m - \tau_{p}$. 
Similar arguments as in the proof of Theorem \ref{MainTheorem1} show that 
$\bar{R} = \ell_{p}^{(m-p-\tau_{p})}$. Let $f$ be a polynomial of degree $2p-1$ such that
$f(j) \geq 0$ for $j=0,1,\ldots,m$. 
Then $f$ can be written as $f = f_{1} f_{2}$ where $f_{1}$ 
is an admissible polynomial of degree $2s$ ($0 \leq s \leq p$) with zeros in the interval $[0,m]$ 
(in case $s=0$, take $f_1 \equiv 1$) and $f_{2}$ is a polynomial of degree $2(p-s)-1$ with 
no zeros in the interval $[0,m]$. Thus necessarily
\begin{equation}
\label{f2positivity}
f_{2}(x) > 0 
\quad \textnormal{for any} \quad 
x \in [0,m].
\end{equation} 
Denote by $\tilde{\mu}_{\bar{R}}$ the positive measure 
\begin{equation*}
\tilde{\mu}_{\bar{R}}= e^{-\bar{R}} \sum_{j=0}^{\infty} \frac{f_{1}(j) \bar{R}^{j}}{j!} \delta_{j},
\end{equation*}
and by $(\tilde{\pi}_{1},\tilde{\pi}_{2},\ldots, \tilde{\pi}_{n}, \ldots)$ 
the sequence of orthogonal polynomials with respect to the measure $\tilde{\mu}_{\bar{R}}$. 
By Gauss quadrature with respect to the measure $\tilde{\mu}_{\bar{R}}$, we have
\begin{equation}
\label{integralpositivity}
\int f(t) d\mu_{\bar{R}}(t) = \int f_{2}(t) f_{1}(t) d\mu_{\bar{R}}(t) = \int f_2(t) d\tilde{\mu}_{\bar{R}}(t) 
=\sum_{j=1}^{p-s} \beta_{j} f_{2}(\tilde{\lambda}_{j}), 
\end{equation}
where $\beta_{j} > 0, j=1,\ldots,p-s$ and where
$\tilde{\lambda}_{1} < \tilde{\lambda}_{2} < \ldots <\tilde{\lambda}_{p-s}$
are the zeros of the orthogonal polynomial $\tilde{\pi}_{p-s}$. 
If we show that $\tilde{\lambda}_{p-s} \leq m$ then, on account of (\ref{f2positivity}),
the integral in (\ref{integralpositivity}) will be non-negative and by Theorem \ref{FarkasLemma}, 
we will have $R_{m,2p-1} \geq \bar{R} = \ell_{p}^{(m-p-\tau_{p})}$. Let us thus assume the opposite,\ie 
$\tilde{\lambda}_{p-s} > m$. Define the admissible polynomial $Q$ of degree $2p-2$ by 
\begin{equation*}
Q(t) = \prod_{j=1}^{p-s-1} (t - \tilde{\lambda}_{j})^2 f_{1}(t).
\end{equation*}
By orthogonality with respect to the measure  $\tilde{\mu}_{\bar{R}}$, we can state that
\begin{equation*}
\int (t-\tilde{\lambda}_{p-s}) Q(t) d\mu_{\bar{R}} = \int \tilde{\pi}_{p-s}(t) \prod_{j=1}^{p-s-1}
(t - \tilde{\lambda}_{j}) d\tilde{\mu}_{\bar{R}}=0.
\end{equation*}
Thus, 
\begin{equation*}
\frac{\int t Q(t) \mu_{\bar{R}}(t)}{\int Q(t) \mu_{\bar{R}}(t)} - \lambda_{p,p}(\bar{R})
= (\tilde{\lambda}_{p-s} -m)+ \tau_{p} > \tau_{p}.
\end{equation*}
This contradicts the definition of $\tau_{p}$. Thus we conclude that $\tilde{\lambda}_{p-s} \leq m$ 
	and the proof is complete.
\qed 
\end{proof}

To give an upper bound for the quantity $\tau_{p}$ defined in (\ref{TauDefinition}), 
we need several preliminary results. Let $P$ be an admissible polynomial and
denote by $(\Pi_{k})_{k \geq 0}$ a sequence of orthogonal polynomials with respect 
to the measure
\begin{equation*}
d\bar{\mu}_{R} = e^{-R} \sum_{j=0}^{\infty} \frac{P(j) R^{j}}{j!} \delta_{j}.
\end{equation*}
Moreover, denote by $(\Pi^{+}_{k})_{k\geq 0}$ the sequence of orthogonal polynomials with 
respect to the measure
\begin{equation*}
d\bar{\mu}^{{+}}_{R} = e^{-R} \sum_{j=0}^{\infty} \frac{P^{+}(j) R^{j}}{j!} \delta_{j},
\quad \textnormal{where} \quad P^{+}(t) =  P(t-1).
\end{equation*}
We need the following comparison result. 
\begin{proposition}
\label{Value1}
Denote by $\bar{\lambda}_{k}$ (resp. $\bar{\lambda}_{k}^{+}$), $k=1,2,\ldots,p, $ 
the zeros of the orthogonal polynomial ${\Pi}_{p}$ (resp. ${\Pi}^{+}_{p}$), listed in increasing order. Then 
\begin{equation*}
\bar{\lambda}_{p}^{+} \leq \bar{\lambda}_{p}+1.
\end{equation*}
\end{proposition}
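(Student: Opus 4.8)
The plan is to relate the two measures $d\bar{\mu}_R$ and $d\bar{\mu}^{+}_R$ by the shift $j \mapsto j+1$ and then track what this does to the orthogonal polynomials and their largest zeros. First I would observe that, writing $k=j+1$, the measure $d\bar{\mu}^{+}_R = e^{-R}\sum_{j\geq 0} \tfrac{P(j-1)R^j}{j!}\delta_j$ has, after re-indexing, mass $e^{-R}\tfrac{P(j)R^{j+1}}{(j+1)!}$ at the node $j+1$; equivalently, $d\bar{\mu}^{+}_R$ is (a constant multiple of) the pushforward under $x \mapsto x+1$ of the measure $d\nu_R := e^{-R}\sum_{j\geq 0} \tfrac{P(j)R^{j+1}}{(j+1)!}\delta_j$. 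So the zeros $\bar{\lambda}^{+}_k$ of $\Pi^{+}_p$ are exactly $1$ plus the zeros of the degree-$p$ orthogonal polynomial for $d\nu_R$. Thus the statement $\bar{\lambda}^{+}_p \leq \bar{\lambda}_p + 1$ reduces to: the largest zero of the $p$-th orthogonal polynomial for $d\nu_R$ is $\leq$ the largest zero of the $p$-th orthogonal polynomial for $d\bar{\mu}_R$.

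Next I would identify $d\nu_R$ in terms of $d\bar{\mu}_R$. Since $\tfrac{R^{j+1}}{(j+1)!} = \tfrac{1}{R}\cdot\tfrac{R^{j+1}}{(j+1)!}\cdot R$, a cleaner route is to note $\tfrac{R^{j+1}}{(j+1)!} = \int_0^R \tfrac{s^j}{j!}\,ds$ is not quite what I want; instead I would use the algebraic identity relating the weight $\tfrac{R^j}{j!}$ at $j$ to $\tfrac{R^{j+1}}{(j+1)!}$ at $j$, namely multiplication of the mass by $\tfrac{R}{j+1}$. So $d\nu_R = R\cdot e^{-R}\sum_j \tfrac{P(j)}{j+1}\cdot\tfrac{R^j}{j!}\delta_j$, i.e. $d\nu_R$ is obtained from $d\bar\mu_R$ by multiplying the weight by the positive factor $\tfrac{R}{j+1}$. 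The key structural fact I would then invoke is a monotonicity principle for largest zeros of orthogonal polynomials under multiplying the weight by a decreasing positive function of the node: if $d\nu = w(j)\,d\bar\mu$ with $w$ positive and non-increasing on the support, then the largest zero of the $p$-th orthogonal polynomial for $d\nu$ does not exceed that for $d\bar\mu$. Here $w(j) = \tfrac{R}{j+1}$ is indeed strictly decreasing in $j$, giving exactly the needed inequality. This monotonicity can be proved by the standard variational characterization of the largest zero (it equals $\sup$ over degree-$(p-1)$ polynomials $q$ of $\tfrac{\int t\,q(t)^2\,d\mu}{\int q(t)^2\,d\mu}$, or the corresponding min over degree-$p$ polynomials vanishing somewhere) together with a Chebyshev-type rearrangement argument exploiting that $w$ is decreasing; alternatively via interlacing and a continuous deformation $w_\theta(j) = (j+1)^{-\theta}$, $\theta\in[0,1]$, differentiating the largest-zero function in $\theta$.

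The main obstacle I anticipate is making the monotonicity-under-weight-perturbation step fully rigorous in the discrete, infinitely-supported setting: one must ensure all relevant moments exist (they do, since $P$ is polynomial and the Poisson tails decay super-exponentially), that the orthogonal polynomials $\Pi_p,\Pi^{+}_p$ genuinely have degree $p$ with $p$ real simple zeros (which requires the measures to have infinite support — true here since $P$, being admissible and non-zero, is positive at infinitely many integers), and that the variational/rearrangement comparison is applied to the right quadratic forms. Once the reduction to "multiply the weight by a decreasing positive function" is in place, the inequality $\bar\lambda^{+}_p \le \bar\lambda_p+1$ follows by adding back the $+1$ from the shift. I would close by remarking that equality analysis is not needed for the later application, so the proof stops at the inequality.
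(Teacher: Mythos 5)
There is a genuine gap in the reduction step. You claim $d\bar\mu^{+}_R$ is (a constant multiple of) the pushforward of $d\nu_R:=e^{-R}\sum_{j\geq 0}\frac{P(j)R^{j+1}}{(j+1)!}\delta_j$ under $x\mapsto x+1$, but this is false unless $P(-1)=0$. Re-indexing carefully, the $j=0$ term in $d\bar\mu^{+}_R=e^{-R}\sum_{j\geq 0}\frac{P(j-1)R^j}{j!}\delta_j$ places a mass $e^{-R}P(-1)$ at the node $0$, which a pushforward onto $\{1,2,\ldots\}$ cannot produce; admissibility only gives $P(-1)\geq 0$, and generically $P(-1)>0$ (for instance $P\equiv 1$ gives $d\bar\mu^{+}_R=\mu_R$, whereas the pushforward of $d\nu_R$ is $\mu_R-e^{-R}\delta_0$). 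Consequently your assertion that the $\bar\lambda^{+}_k$ are exactly $1$ plus the zeros for $d\nu_R$ is wrong, and so is the chain of reductions that rests on it. The gap is repairable: the omitted mass sits at a point not exceeding the rest of the support, and by the Rayleigh-quotient characterization $\lambda_p=\max_{\deg q\leq p-1}\int tq^2\,d\mu\,/\!\int q^2\,d\mu$ adding a nonnegative point mass at $0$ can only increase the denominator while leaving the numerator unchanged, hence can only decrease $\bar\lambda^{+}_p$. But as written you did not notice the extra mass, so the argument is incomplete without this repair.

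Apart from this, the strategy is sound and genuinely different from the paper's. The monotonicity principle you invoke --- multiplying the weight by a positive decreasing function of the node pushes the largest zero to the left --- is Markov's theorem on zero monotonicity, which the paper itself employs (via a very similar differentiated log-weight argument) in the proof of Theorem 7.2, so it is available machinery here, though you only sketch rather than prove it. The paper's own proof of the proposition avoids Markov entirely: it forms $G^{+}_{2p}(x)=\int (x-t)^{2p}\,d\bar\mu^{+}_R$, uses polar forms to obtain a Sylvester-type representation $G^{+}_{2p}(x)=\alpha_0 x^{2p}+\sum_k\alpha_k(x-(\bar\lambda_k+1))^{2p}$ with all $\alpha_k\geq 0$, differentiates, and compares against the Gauss quadrature representation for $\bar\mu^{+}_R$ to read off the zero comparison. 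Your route, once patched, is shorter and more conceptual; the paper's stays self-contained within the polar-form toolkit it has already set up.
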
 
\begin{proof}
Applying Gauss quadrature, we obtain 
\begin{equation}
\label{Forgotten}
G_{2p-1}(x) := \int (x-t)^{2p-1} d\bar{\mu}_{R}(t)
= \sum_{i=1}^{p} \beta_{i} (x-\bar{\lambda}_{i})^{2p-1}, 
\end{equation}
with $\beta_{i} > 0, i=1,\ldots,p$. Consider the polynomial $G^{+}_{2p}$ 
of degree $2p$ defined by
\begin{equation}
\label{G1Equation}
G^{+}_{2p}(x) := \int (x-t)^{2p} d\bar{\mu}^{+}_{R}(t)
\end{equation}
and let $g_{2p}$ be its polar form. We have 
\begin{equation*}
\begin{split}
g^{+}_{2p}(0,x^{[2p-1]}) & = - \int t (x-t)^{2p-1} d\bar{\mu}^{+}_{R}(t) \\
& = - \sum_{j=0}^{\infty} \frac{j (x-j)^{2p-1} P(j-1) R^j}{j!} =  -R {G}_{2p-1}(x-1).
\end{split} 
\end{equation*}
Thus, using (\ref{Forgotten}), we obtain
$g^{+}_{2p}(0,\bar{\lambda}_{1}+1,\ldots,\bar{\lambda}_{p}+1, x^{[p-1]}) \equiv 0.$
Therefore, by Proposition \ref{Prop:Polar}, there exist real numbers 
$\alpha_{0}, \alpha_{1}, \ldots \alpha_{p}$ such that  
\begin{equation}
\label{G2Equation}
G^{+}_{2p}(x) = \alpha_{0} x^{2p} + \sum_{k=1}^{p} \alpha_{k} (x - (\bar{\lambda}_{k}+1))^{2p}.
\end{equation}
Using the fact that 
\begin{equation*}
g^{+}_{2p}(\bar{\lambda}_{1}+1^{[2]},\ldots,\bar{\lambda}_{p}+1^{[2]}) = 
\alpha_{0}\prod_{k=1}^{p} (\bar{\lambda}_{k}+1)^{2} = 
\int \prod_{k=1}^{p} (\bar{\lambda}_{k} +1 - t)^{2}d\bar{\mu}^{+}_{R}(t) \geq 0,
\end{equation*} 
we conclude that $\alpha_{0} \geq 0$. Similarly, with 
	$\Lambda = (0^{[2]},\ldots,\bar{\lambda}_{i-1}+1^{[2]},\bar{\lambda}_{i+1}+1^{[2]},
	\ldots,\bar{\lambda}_{p}+1^{[2]})$, we have 
	\begin{equation*}
	g^{+}_{2p}(\Lambda)  = \alpha_{i}(\bar{\lambda}_i+1)^2 \prod_{k=1,k\not=i}^{p} 
	(\bar{\lambda}_{k}-\bar{\lambda}_i)^{2} = \int t^2 \prod_{k=1,k\not=i}^{p} 
	(\bar{\lambda}_{k} +1 - t)^{2}d\bar{\mu}^{+}_{R}(t) \geq 0.
	\end{equation*} 
	Thus we conclude that $\alpha_i \geq 0$ for $i=1,2,\ldots,p$. 
	Differentiating (\ref{G1Equation}) and (\ref{G2Equation}) with respect to the variable $x$,
	and applying Gauss quadrature, we obtain   
	\begin{equation}
	\label{transfer}
	\alpha_{0} x^{2p-1} + \sum_{k=1}^{p} \alpha_{k} (x - (\bar{\lambda}_{k}+1))^{2p-1}
	=\sum_{i=1}^{p} \gamma_{i} (x-\bar{\lambda}^{+}_{i})^{2p-1},
	\end{equation}
	with $\gamma_i > 0$ for $i=1,2,\ldots,p$. Evaluating the polar form of 
	both side of (\ref{transfer}) at 
	$({\bar{\lambda}^{+ [2]}_{1}},\ldots,{{\bar{\lambda}^{+ [2]}}_{p-1}},\bar{\lambda}^{+}_{p})$
	shows that $\bar{\lambda}^{+}_{p} \leq \bar{\lambda}_{p} +1$.
\qed
\end{proof} 
The only instance of the previous proposition that we shall need is the $p=1$ case.
The unique zero $\bar{\lambda}_1$ of the polynomial $\Pi_{1}$ is given by the 
condition
\begin{equation*}
\int (t-\bar{\lambda}_{1}) d\bar{\mu}_{R} = \int (t-\bar{\lambda}_{1}) P(t) d\mu_{R} = 0. 
\end{equation*} 
Thus 
\begin{equation*}
\bar{\lambda}_1 =  \frac{\int t P(t) d\mu_{R}(t)}{\int P(t) d\mu_{R}(t)}.
\end{equation*}
In this specific situation, Proposition \ref{Value1} states that, for any admissible 
polynomial $P$, we have 
\begin{equation*}
\frac{\int t P(t) d\mu_{R}(t)}{\int P(t) d\mu_{R}(t)} \leq  
\frac{\int t P(t+1) d\mu_{R}(t)}{\int P(t+1) d\mu_{R}(t)} + 1.
\end{equation*}
Iterating this inequality leads to the following result.

\begin{corollary}
	\label{iterated1}
	Let $R$ be a positive real number. Then for any admissible polynomial $P$ 
	and for any non-negative integer $j$, we have  
	\begin{equation*}
	\frac{\int t P(t) d\mu_{R}(t)}{\int P(t) d\mu_{R}(t)} \leq 
	\frac{\int t P(t+j) d\mu_{R}(t)}{\int P(t+j) d\mu_{R}(t)} + j.
	\end{equation*} 
\end{corollary}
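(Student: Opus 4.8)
The plan is a direct induction on $j$, with the single-step inequality stated just before the corollary — namely the $p=1$ specialization of Proposition \ref{Value1} — serving as the inductive engine. First I would fix the admissible polynomial $P$ and the positive real number $R$, and abbreviate, for each non-negative integer $k$,
\begin{equation*}
a_k := \frac{\int t\,P(t+k)\,d\mu_R(t)}{\int P(t+k)\,d\mu_R(t)}.
\end{equation*}
Before anything else I would check that each $a_k$ is meaningful: the shift $t\mapsto P(t+k)$ carries $\mathbb{Z}$ into $\mathbb{Z}$, hence is again admissible, and since it is a non-zero polynomial it vanishes at only finitely many points of $\mathbb{N}$, so $\int P(t+k)\,d\mu_R(t)=e^{-R}\sum_{j\geq 0}P(j+k)R^{j}/j!>0$. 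In this notation the assertion to be proved is precisely $a_0\leq a_j+j$.

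The one-step estimate is obtained by applying the displayed inequality preceding the corollary to the admissible polynomial $t\mapsto P(t+k)$ in place of $P$; this yields
\begin{equation*}
\frac{\int t\,P(t+k)\,d\mu_R(t)}{\int P(t+k)\,d\mu_R(t)}\leq\frac{\int t\,P(t+k+1)\,d\mu_R(t)}{\int P(t+k+1)\,d\mu_R(t)}+1,
\end{equation*}
that is, $a_k\leq a_{k+1}+1$ for every non-negative integer $k$. The corollary then follows by telescoping: by induction on $j$, the base case $j=0$ is the trivial identity $a_0=a_0$, and the inductive step is $a_0\leq a_j+j\leq (a_{j+1}+1)+j=a_{j+1}+(j+1)$, the middle inequality being the one-step estimate with $k=j$. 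Unwinding the definitions of $a_0$ and $a_j$ gives exactly the inequality claimed.

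I do not anticipate a genuine obstacle here: the analytic content lives entirely in Proposition \ref{Value1}, and what is left is bookkeeping. The two points deserving a moment's care are (i) verifying that every shift $P(\cdot+k)$ of an admissible polynomial is admissible and integrates to a strictly positive number against $\mu_R$, so that all the quotients $a_k$ are legitimate, and (ii) feeding the correct polynomial into the one-step inequality and tracking the direction of the resulting estimates so that the $j$ copies chain together in the right sense.
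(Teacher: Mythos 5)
Your proof is correct and is exactly the argument the paper intends: the paper itself simply says ``iterating this inequality leads to the following result,'' and your induction on $j$ via $a_k \leq a_{k+1}+1$ (plus the sanity checks that shifts of an admissible polynomial are admissible and integrate to a positive number) is the fully spelled-out version of that iteration.
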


We shall need the following proposition whose proof was kindly provided to us 
by Fedja Nazarov \cite{aithaddou}.
\begin{proposition}
	\label{Conjecture}
	Let $P$ be an admissible polynomial of degree at most $2n$. Then the polynomial 
	\begin{equation*}
	Q(t) = \sum_{k=0}^{n} \binom{n}{k}^2  P(t+k)
	\end{equation*}
	is non-negative on the whole real line,\ie $Q(t) \geq 0$ for all $t \in \mathbb{R}$.
\end{proposition}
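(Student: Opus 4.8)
The plan is to pass from the pointwise hypothesis on $P$ to an extreme–point description. Let $\mathcal{K}_{n}$ be the set of real polynomials of degree at most $2n$ that are non-negative on $\mathbb{Z}$. This is a closed convex cone; it contains no line, since $P,-P\in\mathcal{K}_{n}$ would force $P$ to vanish on $\mathbb{Z}$, hence identically; and it lies in the finite–dimensional space of polynomials of degree $\le 2n$. A pointed closed cone in a finite–dimensional space is the conical hull of its extreme rays, so it suffices to prove $\sum_{k=0}^{n}\binom{n}{k}^{2}P(t+k)\ge 0$ when $P$ runs over (a generating set of) these extreme rays.

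First I would classify the extreme rays. Any nonzero $P\in\mathcal{K}_{n}$ has even degree $2m\le 2n$ and positive leading coefficient, so in a decomposition $P=P_{1}+P_{2}$ with $P_{i}\in\mathcal{K}_{n}$ there is no leading–term cancellation and $\deg P_{i}\le\deg P$; thus extremality of $P$ is decided inside the space of polynomials of degree $\le 2m$, and $P$ is extreme there iff the evaluation functionals at its integer zeros have rank $2m$. Since evaluations at distinct points are linearly independent, this forces $P$ to have exactly $2m$ simple zeros, all integers, and non-negativity on $\mathbb{Z}$ then forces them into consecutive pairs. Hence, up to a positive factor, the extreme rays are the constants (for which the inequality is just $\binom{2n}{n}P\ge 0$) and the polynomials
$$R(t)=\prod_{i=1}^{m}(t-l_{i})(t-l_{i}-1),\qquad l_{i+1}\ge l_{i}+2,\quad 1\le m\le n.$$
Two further reductions simplify the remaining case: multiplying such an $R$ by extra factors $(t-l_{i})(t-l_{i}-1)$ with $l_{i}\to\infty$ multiplies $\sum_{k}\binom{n}{k}^{2}R(t+k)$ by a positive quantity in the limit, so one may assume $m=n$; and an integer shift of $t$ lets one assume $l_{1}=0$.

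It remains to prove $\sum_{k=0}^{n}\binom{n}{k}^{2}R(t+k)\ge 0$ for all real $t$ when $R(t)=\prod_{i=1}^{n}(t-l_{i})(t-l_{i}-1)$ with gaps $\ge 2$. Here I would use that $\sum_{k}\binom{n}{k}^{2}x^{k}$ is, up to an affine change of variable, a Legendre polynomial, hence has $n$ simple negative real zeros $-\mu_{1},\dots,-\mu_{n}$; so, with $E$ the forward shift, $\sum_{k}\binom{n}{k}^{2}E^{k}=\prod_{j=1}^{n}(E+\mu_{j})$. Each factor acts by $(E+\mu_{j})f(t)=f(t+1)+\mu_{j}f(t)$ with $\mu_{j}>0$, so it carries polynomials of degree $\le 2n$ non-negative on $\mathbb{Z}$ to polynomials with the same two properties; consequently $Q:=\sum_{k}\binom{n}{k}^{2}R(t+k)$ is again non-negative on $\mathbb{Z}$. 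The real content, and what I expect to be the main obstacle, is to upgrade this to non-negativity on all of $\mathbb{R}$. This must use both the pairwise disjointness of the negative intervals $(l_{i},l_{i}+1)$ of $R$ and the precise multipliers $\mu_{j}$ (equivalently, that they encode the Gauss–Legendre nodes): the oscillation introduced by the shifts is meant to cancel exactly the oscillation of $R$, as already happens in the base case $n=1$, where $(E+1)\bigl(t(t-1)\bigr)=2t^{2}$. A cleaner, dual formulation of this last step — in the spirit of Theorem~\ref{FarkasLemma} — is to exhibit a non-negative measure $\sigma$ supported on $\mathbb{Z}$ whose moments up to order $2n$ agree with those of the atomic measure $\sum_{k=0}^{n}\binom{n}{k}^{2}\delta_{t+k}$; then $\sum_{k}\binom{n}{k}^{2}P(t+k)=\int P\,d\sigma\ge 0$ for every admissible $P$ at once, and the extreme–ray analysis above is only needed to locate the support of $\sigma$.
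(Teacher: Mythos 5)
Your proof is incomplete, and the missing piece is exactly the step you flag yourself as ``the main obstacle.'' After reducing to extreme rays $R(t)=\prod_{i=1}^{n}(t-l_{i})(t-l_{i}-1)$ and factoring $\sum_k\binom{n}{k}^2E^k=\prod_j(E+\mu_j)$ with $\mu_j>0$, you only obtain that $Q$ is non-negative on $\mathbb{Z}$ --- but the statement requires $Q\ge 0$ on all of $\mathbb{R}$, and you give no argument for that. Worse, non-negativity on $\mathbb{Z}$ is trivial and does not need the factorization: $Q(j)=\sum_k\binom{n}{k}^2P(j+k)$ is obviously $\ge 0$ for integer $j$ since each summand is. So the part you do complete has no real content, and the part that matters is left open. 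Also note that if you could finish the extreme-ray case, you'd still need to verify uniform approximation by conical combinations for a fixed non-integer $t$ (the cone is generated by infinitely many extreme rays), so there are secondary technicalities lurking as well.

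The paper takes a completely different and much more direct route, and in fact it delivers precisely the ``dual formulation'' you wished for at the end: it exhibits, for each fixed non-integer $t_0$, an explicit non-negative measure on $\mathbb{Z}$ whose first $2n$ moments match those of $\sum_{k=0}^{n}\binom{n}{k}^{2}\delta_{t_0+k}$. Concretely, with $N(t)=t(t-1)\cdots(t-n)$ and $\lambda=\pi t_0+\pi/2$, a residue computation applied to $F(z)=\dfrac{\tan(\pi z+\lambda)-\tan(\lambda)}{N(z)^{2}}\,P(z+t_0)$ (summing residues to zero over large circles using the $|z|^{-2}$ decay) gives
\begin{equation*}
\sum_{k=0}^{n}\binom{n}{k}^{2}P(t_0+k)
=\frac{(n!)^{2}\cos^{2}\lambda}{\pi^{2}}\sum_{j\in\mathbb{Z}}\frac{P(j)}{N(j-t_0)^{2}}\ \ge 0 ,
\end{equation*}
since every $P(j)\ge 0$ by admissibility and the weights $\cos^{2}\lambda/N(j-t_0)^{2}$ are manifestly non-negative. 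This identity proves the proposition for \emph{all} admissible $P$ at once and makes the extreme-ray classification unnecessary. If you want to salvage your plan, you would need to construct such a measure (or otherwise close the real-line gap); as written, the proposal does not constitute a proof.
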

\begin{proof}
	Let $t_{0}$ be an arbitrary real number in $\mathbb{R}/\mathbb{Z}$. 
	Denote by $S$ the polynomial $S(t) = P(t+t_{0})$. We thus need to show that 
	\begin{equation*}
	\sum_{k=0}^{n} \binom{n}{k}^2  S(k) \geq 0 
	\; \textnormal{under the hypothesis that} \;
	S(t) \geq 0 \; \textnormal{for any} \; t \in \Lambda,
	\end{equation*}
	where $\Lambda := -t_{0}+\mathbb{Z}$. The set $\Lambda$ can be viewed as 
	$\Lambda =\{t\in\mathbb{R} \ |\ \cos(\pi t + \lambda) = 0\}$, where for instance 
	$\lambda:=\pi t_0+\pi/2$. Set $N(t) := t(t-1)(t-2)\ldots(t-n)$ and consider 
	the meromorphic function 
	\begin{equation*}
	F(z) =   \frac{ \tan(\pi z + \lambda) - \tan(\lambda)}{N(z)^2} S(z).
	\end{equation*}
	The poles of $F$ are simple and $F(z)$ decays like $|z|^{-2}$ on any large circle centered at zero   
	and does not pass through the poles of the function $\tan(\pi z + \lambda)$. 
	Therefore, the sum of residues of the function $F$ converges to zero. 
	The residue of $F$ at the zero $k$ of $N$ is given by
	\begin{equation*}
	Res_{z= k} F(z) = \frac{\pi}{(n!)^2 \cos^2(\lambda)} \binom{n}{k}^2 S(k),
	\quad k=0,1,\ldots,n,   
	\end{equation*}
	while the residue of $F$ at a pole $t \in \Lambda$ is given by 
	\begin{equation*}
	Res_{z= t \in \Lambda} F(z) = -\frac{S(t)}{\pi N(t)^2}.
	\end{equation*}
	Thus we obtain 
	\begin{equation*}
	\sum_{k=0}^{n} \binom{n}{k}^2  S(k)  = 
	\frac{(n!)^2 \cos^2 \lambda}{\pi^2} \sum_{t \in \Lambda} 
	\frac{S(t)}{N^2(t)} \geq 0.
	\end{equation*}
	This concludes the proof.
\qed
\end{proof}

We are now in a position to give an upper bound for the quantity $\tau_{p}$ defined in (\ref{TauDefinition}). 

\begin{theorem}
	For any positive integer $p$, we have $\tau_{p} \leq p-1$.
\end{theorem}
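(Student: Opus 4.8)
The plan is to turn the two rational quantities in the definition of $\tau_p$ into expressions governed by the largest Gauss node of $\mu_R$, using Proposition \ref{Conjecture} to replace a polynomial that is merely nonnegative on $\mathbb{Z}$ by a companion polynomial that is nonnegative on all of $\mathbb{R}$. Fix $R>0$ and $P\in\mathcal{C}_p$, so $\deg P\le 2p-2=2(p-1)$. Applying Proposition \ref{Conjecture} with $n=p-1$ (which is exactly the admissible degree), the polynomial
\[
Q(t)=\sum_{k=0}^{p-1}\binom{p-1}{k}^{2}P(t+k)
\]
is nonnegative on the whole real line. Its leading coefficient equals that of $P$ times $\sum_{k}\binom{p-1}{k}^{2}=\binom{2p-2}{p-1}\neq0$, so $\deg Q=\deg P\le 2(p-1)$; being nonnegative on $\mathbb{R}$ its degree is even, hence $Q=A^{2}+B^{2}$ for real polynomials $A,B$ with $\deg A,\deg B\le p-1$, not both zero. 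Moreover, since a nonzero polynomial that is nonnegative on $\mathbb{Z}$ cannot vanish on all of $\mathbb{N}$ and $\mu_R$ has full support $\mathbb{N}$, every integral $\int P(t+k)\,d\mu_R(t)$ ($0\le k\le p-1$) and $\int Q(t)\,d\mu_R(t)$ is strictly positive, which legitimizes all the divisions below.

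The first key step is to prove $\int tQ\,d\mu_R/\int Q\,d\mu_R\le\lambda_{p,p}(R)$. Here I would invoke the classical variational description of the extreme Gauss node: for any nonzero polynomial $\pi$ with $\deg\pi\le p-1$, exactness of the $p$-point Gauss quadrature for $\mu_R$ up to degree $2p-1$ gives $\int t\pi^{2}\,d\mu_R/\int \pi^{2}\,d\mu_R=\sum_i\omega_i\lambda_{i,p}\pi(\lambda_{i,p})^{2}/\sum_i\omega_i\pi(\lambda_{i,p})^{2}$, a convex combination of $\lambda_{1,p}<\dots<\lambda_{p,p}$ (the weights $\omega_i\pi(\lambda_{i,p})^2$ are nonnegative and not all zero since $\pi$ cannot vanish at all $p$ nodes), hence it lies in $[\lambda_{1,p},\lambda_{p,p}]$. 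Applying this to $A$ and to $B$ (discarding a term if $A\equiv0$ or $B\equiv0$) and using the elementary mediant inequality $\frac{x+y}{a+b}\le\max\bigl\{x/a,\,y/b\bigr\}$ for $a,b>0$ yields $\int tQ\,d\mu_R/\int Q\,d\mu_R\le\lambda_{p,p}(R)$.

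The second key step transfers this bound back to $P$ via Corollary \ref{iterated1}. For each $k=0,1,\dots,p-1$ that corollary gives
\[
\frac{\int tP(t)\,d\mu_{R}(t)}{\int P(t)\,d\mu_{R}(t)}
\le\frac{\int tP(t+k)\,d\mu_{R}(t)}{\int P(t+k)\,d\mu_{R}(t)}+k
\le\frac{\int tP(t+k)\,d\mu_{R}(t)}{\int P(t+k)\,d\mu_{R}(t)}+(p-1).
\]
Multiplying by $\binom{p-1}{k}^{2}\int P(t+k)\,d\mu_{R}(t)>0$ and summing over $k$, the coefficients of $P(t+k)$ reassemble into $Q$, giving $\bigl(\int tP\,d\mu_R/\int P\,d\mu_R\bigr)\int Q\,d\mu_R\le\int tQ\,d\mu_R+(p-1)\int Q\,d\mu_R$; dividing by $\int Q\,d\mu_R>0$ and using the first step,
\[
\frac{\int tP(t)\,d\mu_{R}(t)}{\int P(t)\,d\mu_{R}(t)}-\lambda_{p,p}(R)\le\frac{\int tQ\,d\mu_R}{\int Q\,d\mu_R}-\lambda_{p,p}(R)+(p-1)\le p-1.
\]
Since $P\in\mathcal{C}_p$ and $R>0$ were arbitrary, taking the supremum yields $\tau_p\le p-1$.

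I expect the main obstacle to be the first step, since it is where the passage from $P$ to $Q$ — and hence the hard input Proposition \ref{Conjecture} — is indispensable: we need $Q$ nonnegative on \emph{all} of $\mathbb{R}$, not merely on $\mathbb{Z}$, to get the sum-of-two-squares representation with the degree bound $\le p-1$, which is precisely what makes the Gauss-node variational bound applicable. The only other point demanding care is bookkeeping of strict positivity of the integrals $\int P(\cdot+k)\,d\mu_R$ and $\int Q\,d\mu_R$, so that the mediant inequality and the final division are valid.
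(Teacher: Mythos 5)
Your proof is correct and rests on exactly the same pillars as the paper's: Proposition \ref{Conjecture} to produce $Q=\sum_{k=0}^{p-1}\binom{p-1}{k}^2 P(\cdot+k)\ge 0$ on $\mathbb{R}$, a Gauss-quadrature estimate $\int tQ\,d\mu_R\le\lambda_{p,p}(R)\int Q\,d\mu_R$, and Corollary \ref{iterated1} to transfer the bound back to $P$. The two places where you deviate are both inessential simplifications in disguise rather than a different method. First, for the estimate $\int tQ/\int Q\le\lambda_{p,p}$ you pass through a sum-of-two-squares decomposition $Q=A^2+B^2$ and a mediant inequality; the paper skips this entirely, since $p$-point Gauss quadrature for $\mu_R$ is exact on $tQ$ (degree $\le 2p-1$) and $Q$ (degree $\le 2p-2$), so
\begin{equation*}
\int tQ\,d\mu_R=\sum_{k=1}^{p}\alpha_k\lambda_{k,p}Q(\lambda_{k,p})\le\lambda_{p,p}\sum_{k=1}^{p}\alpha_kQ(\lambda_{k,p})=\lambda_{p,p}\int Q\,d\mu_R,
\end{equation*}
using only that $\alpha_k>0$ and $Q(\lambda_{k,p})\ge 0$; you could shorten your argument by a couple of lines this way. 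Second, where you combine the $p$ inequalities from Corollary \ref{iterated1} by a weighted sum against $\binom{p-1}{k}^2\int P(\cdot+k)\,d\mu_R$ and then divide by $\int Q\,d\mu_R$, the paper instead observes that $\int tQ\le\lambda_{p,p}\int Q$ forces, by pigeonhole on the nonnegative weights, the existence of a single index $j\le p-1$ with $\int tP(\cdot+j)\,d\mu_R\le\lambda_{p,p}\int P(\cdot+j)\,d\mu_R$, and then applies Corollary \ref{iterated1} once with that $j$. Both variants are valid; the pigeonhole route gives the marginally sharper statement $\tau_p\le j$ for the particular $j$ produced, while your averaging route is perhaps cleaner to write. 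Your attention to strict positivity of $\int P(\cdot+k)\,d\mu_R$ and $\int Q\,d\mu_R$ (since a nonzero polynomial cannot vanish on all of $\mathbb{N}$ and $\mu_R$ has full support there) correctly addresses the one point the paper glosses over.
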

\begin{proof}
	Let $R$ be a fixed real number and $P$ be an admissible polynomial of 
	degree $2p-2$. According to Proposition \ref{Conjecture}, the polynomial
	$Q(t) = \sum_{k=0}^{p-1} \binom{p-1}{k}^2 P(t+k)$ is non-negative over 
	the whole real line. Applying Gauss quadrature and taking in account the 
	non-negativity of $Q$, we obtain
	\begin{equation*} 
	\begin{split}
	\int t Q(t) d\mu_{R}(t) & = \sum_{k = 1}^{p} \alpha_k \lambda_{k,p}(R) Q(\lambda_{k,p}(R)) \\
	& \leq \lambda_{p,p}(R)  \sum_{k = 1}^{p} \alpha_k Q(\lambda_k(R)) = \lambda_{p,p}(R) \int  Q(t) d\mu_{R}(t),
	\end{split}
	\end{equation*} 
	where $\lambda_{1,p}(R)<\lambda_{2,p}(R)<\ldots<\lambda_{p,p}(R)$ are the zeros of the Poisson-Charlier 
	polynomial $C_{p}(.,R)$. Therefore, there exists an integer $j \leq p-1$ such that  
	\begin{equation*}
	\int t P(t+j) d\mu_{R}(t) \leq \lambda_{p,p}(R) \int  P(t+j) d\mu_{R}(t). 
	\end{equation*} 
	Using Corollary \ref{iterated1}, we obtain 
	\begin{equation*}
	\frac{\int t P(t) d\mu_{R}(t)}{\int P(t) d\mu_{R}(t)} \leq 
	\frac{\int t P(t+j) d\mu_{R}(t)}{\int P(t+j) d\mu_{R}(t)} +j 
	\leq \lambda_{p,p}(R) + j.
	\end{equation*}
	Therefore, $\tau_{p} \leq j \leq p-1$. This concludes the proof.
\qed
\end{proof}
We are now in a position to prove Theorem \ref{MainTheorem2} (see Introduction).

\smallskip 

\noindent
{\bf{Proof of Theorem \ref{MainTheorem2}: }}
Using the well known fact that the zeros of the generalized Laguerre polynomials
$L_n^{(\alpha)}$ are increasing function of the parameter $\alpha \in (-1,\infty)$ 
\cite[pp. 121-122]{szego} along with Theorem \ref{lowerbound1} complete 
the proof of Theorem \ref{MainTheorem2}. 

\begin{remark}
	Lower bounds for the quantity $\tau_{p}$ can be obtained using the stability property 
	of the optimal threshold factors along the diagonals. For example, it is shown in \cite{kraaPoly} 
	that $R_{5,3}$ is the unique positive real zero of the cubic equation $r^3 - 5 r^2 +10r -10 = 0$, i.e; 
	$R_{5,3} \simeq 2.6506$ and that $R_{n+2,n} = R_{5,3}$ for any $ n\geq 3$. Thus, from Theorem 
	\ref{lowerbound1}, for any $p \geq 2$
	\begin{equation}
	\label{InequalityTau}
	R_{2p+1,2p-1} = R_{5,3} \geq \ell_{p}^{(p+1-\tau_{p})}.
	\end{equation}
	Let $\alpha$ be the unique real number such that $\ell_{p}^{(\alpha)} = R_{5,3} \simeq 2.6506$. 
	The monotonicity of the zero of Laguerre polynomials with respect to 
	the parameter $\alpha \in (-1,\infty)$ enables us to conclude from (\ref{InequalityTau}) that 
	$\tau_{p} \geq p+1-\alpha$. Some of the upper bounds to $\tau_{p}$ using this inequality are
	\begin{equation*}
	\tau_{5} \geq 0.74, \quad \tau_{8} \geq 1.99, \quad \tau_{12} \geq 4.10, \quad \tau_{15} \geq 5.88, 
	\quad \tau_{22} \geq 10.42. 
	\end{equation*}    
\end{remark} 

\section{Structural properties of the optimal threshold polynomials}
In this section, we adapt and extend an ingenious technique by Bernstein \cite{bernstein} 
to identify a structural property of the optimal threshold polynomial $\Phi_{m,n}$ that will
be fundamental throughout the rest of the paper. To ease our exposition we adopt the following
terminology. When we write a polynomial in the form 
\begin{equation}
\label{Model}
\Phi(x) = \sum_{k=1}^{s} \alpha_k \left(1 + \frac{x}{R}\right)^{m_k}
\; \textnormal{with} \; m_i \not= m_j, \; \textnormal{if} \; i\not=j;\; 
i,j=1,2,\ldots,s,  
\end{equation}
then we will call the integers $m_1,m_2,\ldots,m_s$ the {\it{exponents}} of 
$\Phi$ and the real numbers $\alpha_1,\alpha_2,\ldots,\alpha_s$ the {\it{coefficients}} 
of $\Phi$. For a given index $k$, we shall call $\alpha_k$ the coefficient associated with 
$m_k$ or simply the coefficient of $m_k$. We will use the term {\bf{missing exponents}} 
for exponents $m_k$ whose associated coefficients $\alpha_k$ are equal to zero. If an 
exponent $m_k$ is missing in the representation (\ref{Model}) then it is in fact 
a virtual exponent and can be placed anywhere at will. Thus when we say that the sequence 
of exponents $(m_1,m_2,\ldots,m_s)$ satisfies a certain property $(P)$ we mean that we can find positions
for the missing exponents such that the resulting sequence of exponents satisfies the property $(P)$. 
We use the expression {\bf{explicitly missing exponent}} to refers to the fact that we have deleted 
the missing exponent from the exponents sequence. For instance, when we say that a finite sequence 
$(m_1,m_2,m_3,m_4)$ is given by $(1,4,5)$ then necessarily there is one explicitly missing exponent.    
We have purposely avoided the use of the terminology of {\it{principal polynomials}} as in the seminal work of Bernstein \cite{bernstein} 
for the following reason: In Bernstein work, the sequence of exponents is not bounded above,
while in our case all the exponents of the optimal threshold polynomial $\Phi_{m,n}$ are at most equal to $m$. 
It will be also helpful to explicitly state the following simple theorem showing that there are 
four different ways to look at the problem at hand. The proof being implicitly contained in the previous sections,
we leave it to the reader.

\begin{theorem}
	\label{EquivalenceTheorem}
	Let $m, n$ be positive integers such that $m \geq n$. 
	Let $(\alpha_1,\alpha_2,\ldots,\alpha_s)$ be non-negative real numbers and 
	$(m_1,m_2,\ldots,m_s)$ be pair-wise distinct non-negative integers. 
	The following statements are equivalent. 
	
	\begin{enumerate}[\quad\rm(\roman{enumi})]
		\item The polynomial $\mathcal{H}_{n}(x,.)$ can be written as 
		\begin{equation}
		\label{NonSystem1}
		\mathcal{H}_{n}(x,R) = \int (x-t)^{n} d\mu_{R}(t) = \sum_{k=1}^{s} \alpha_{k} (x-m_{k})^n.
		\end{equation}
		\item The real numbers $\alpha_{1},\ldots,\alpha_{s}$ and the integers $m_{1},\ldots,m_{s}$ 
		satisfy the system 
		\begin{equation}
		\label{YesSystem}
		\sum_{k=1}^{s} \alpha_{k} m_{k}^{\ell} = B_{\ell}(R) 
		\quad \textnormal{for} \quad \ell=0,1,\ldots,n.
		\end{equation}
		\item The measure $\mu_{R}$ possesses a positive quadrature with integer nodes,\ie 
		for any polynomial $P$ of degree at most $n$, we have  
		\begin{equation*}      
		\int P(t) d\mu_{R}(t) = \sum_{k=1}^{s} \alpha_{k} P(m_{k}).
		\end{equation*}
		\item The polynomial $\Phi$ defined by 
		\begin{equation}
		\label{NonSystem2}
		\Phi(x) = \sum_{k=1}^{s} \alpha_k \left( 1 + \frac{x}{R}\right)^{m_k}
		\end{equation} 
		is absolutely monotonic over the interval $[-R,0]$ and it satisfies 
		$\Phi(x) - e^{x} = \mathcal{O}(x^{n+1})$ as $x\rightarrow 0$.
	\end{enumerate} 
\end{theorem}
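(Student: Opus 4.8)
The plan is to prove the three separate equivalences (i) $\Leftrightarrow$ (ii), (ii) $\Leftrightarrow$ (iii) and (i) $\Leftrightarrow$ (iv), all of which amount to observing that each of the four statements is merely a reformulation of the single finite system of \emph{moment conditions} appearing as (ii),
\begin{equation*}
\sum_{k=1}^{s}\alpha_k m_k^{\ell}=B_{\ell}(R),\qquad \ell=0,1,\ldots,n.
\end{equation*}
I would open the proof by recording the one ingredient that couples everything to the Touchard polynomials, namely $\int t^{\ell}\,d\mu_{R}(t)=B_{\ell}(R)$ for $\ell=0,1,\ldots,n$, which is the right identity in Proposition~\ref{proposition_monomial}.

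For (i) $\Leftrightarrow$ (ii) I would expand each $(x-m_k)^{n}$ by the binomial theorem and expand $\mathcal{H}_{n}(x;R)$ by its definition~(\ref{hpolynomial}); comparing the coefficients of $x^{n-j}$ on the two sides of $\mathcal{H}_{n}(x;R)=\sum_k\alpha_k(x-m_k)^{n}$ yields exactly $(-1)^{j}\binom{n}{j}\sum_k\alpha_k m_k^{j}=(-1)^{j}\binom{n}{j}B_{j}(R)$ for $j=0,\ldots,n$, i.e. (ii). This is the very bookkeeping already performed in the proof of the Proposition preceding Corollary~\ref{HR}. For (ii) $\Leftrightarrow$ (iii): statement (iii) says $\int P\,d\mu_{R}=\sum_k\alpha_k P(m_k)$ for every $P$ of degree at most $n$; testing against the monomial basis $1,t,\ldots,t^{n}$ and using $\int t^{\ell}\,d\mu_{R}=B_{\ell}(R)$ gives (ii), while (ii) extends back to (iii) by linearity since those monomials span the space of polynomials of degree at most $n$. (This is precisely the content of the Remark following Corollary~\ref{hintegral}.)

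For (i) $\Leftrightarrow$ (iv) I would invoke the Proposition stated just before Corollary~\ref{HR}, now with the non-negativity of $\alpha_1,\ldots,\alpha_s$ a standing hypothesis rather than a conclusion, which only simplifies matters. Concretely, from $\Phi^{(\ell)}(x)=\sum_k \alpha_k (m_k)_{\ell}R^{-\ell}(1+x/R)^{m_k-\ell}$ one reads off that $\Phi^{(\ell)}(-R)=0$ unless $\ell\in\{m_1,\ldots,m_s\}$ and that $\Phi^{(m_k)}(-R)=\alpha_k(m_k)_{m_k}R^{-m_k}\geq 0$; since $R>0$, $\Phi$ is absolutely monotonic at $-R$ and hence on the whole interval $[-R,0]$ by Lemma~4.3 of \cite{Kra2}. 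Thus, \emph{given} $\alpha_k\geq 0$, the absolute-monotonicity half of (iv) is automatic, and (iv) is equivalent to the order condition $\Phi(x)-e^{x}=\mathcal{O}(x^{n+1})$ alone. Differentiating $\ell$ times at $0$ gives $\Phi^{(\ell)}(0)=R^{-\ell}\sum_k\alpha_k(m_k)_{\ell}$, and via the Stirling expansions~(\ref{stirlings}) together with~(\ref{stirling-bell}) this equals $1$ for $\ell=0,\ldots,n$ if and only if (ii) holds; so (iv) $\Leftrightarrow$ (ii) $\Leftrightarrow$ (i), closing the cycle.

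I do not expect a genuine obstacle: the theorem is a repackaging of computations already distributed across Sections~\ref{sec:2} and~\ref{sec:3}. The only points requiring a little care are (a) keeping track of \emph{missing exponents} — a vanishing $\alpha_k$ imposes no constraint and its $m_k$ may be placed arbitrarily, so all four statements are to be read modulo such placements, exactly as in the terminology fixed before the theorem; and (b) the passage from absolute monotonicity at the single endpoint $-R$ to absolute monotonicity on $[-R,0]$, which is where $R>0$ and the cited Lemma~4.3 of \cite{Kra2} are used.
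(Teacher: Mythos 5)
Your proof is correct and is precisely the argument the paper intends: the theorem is stated without proof, the author remarking that it is ``implicitly contained in the previous sections,'' and your reduction of all four statements to the moment system (ii) via the coefficient comparison in the Proposition of Section~\ref{sec:2}, the monomial-basis test with $\int t^{\ell}\,d\mu_{R}=B_{\ell}(R)$, and the Stirling/Touchard inversion for $\Phi^{(\ell)}(0)=1$ is exactly that implicit content. Your side observation — that with $\alpha_k\ge 0$ standing, absolute monotonicity on $[-R,0]$ is automatic, so (iv) reduces to the order condition alone — is a clean simplification over re-deriving non-negativity as in the Proposition, and it is sound.
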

Sometimes we shall refer to (\ref{NonSystem1}) or to (\ref{NonSystem2}) as being a system in $\alpha_i, m_i, i=1,\ldots,s$ 
when we actually mean the system (\ref{YesSystem}). The following fundamental theorem is based on ideas 
by Bernstein in \cite{bernstein}.  
\begin{theorem}
	\label{fundamental-theorem2}
	For any positive integers $m$ and $p$ such that $m \geq 2p-1$, 
	the optimal threshold polynomial $\Phi_{m,2p-1}$ has the form 
	\begin{equation}
	\label{structureP}
	\Phi_{m,2p-1}(x) = \sum_{k=1}^{2p} \alpha_k \left(1 + \frac{x}{R_{m,2p-1}}\right)^{m_k},
	\end{equation}
	where $\alpha_k, k=1,2,\ldots,2p$ are non-negative real numbers and where 
	the integers $0 \leq m_1<m_2<\ldots<m_{2p-1}<m_{2p} \leq m$ can be grouped in the form 
	\begin{equation}
	\label{myconditionm1}
	(q_1,q_1+1), (q_2,q_2+1),\ldots, (q_{p-1},q_{p-1}+1), q_{p}
	\end{equation} 
	with one explicitly missing exponent (and possibly other missing exponents) or of the form 
	\begin{equation}
	\label{myconditionm2}
	(q_1,q_1+1), (q_2,q_2+1),\ldots, (q_{p-1},q_{p-1}+1), (q_{p},q_{p}+1)
	\end{equation}
	with at least one missing coefficient. In (\ref{myconditionm1}) and (\ref{myconditionm2}),
	the integers $q_{1},q_{2},\ldots,q_{p}$ satisfy the inequalities
	\begin{equation*}
	q_{k}+1 < q_{k+1}, \quad k=1,2,\ldots,p-1.   
	\end{equation*}
\end{theorem}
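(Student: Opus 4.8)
The plan is to start from Kraaijevanger's normal form (Theorem~\ref{Kmaintheorem}) which, for $n=2p-1$, gives a representation $\mathcal{H}_{2p-1}(x;R_{m,2p-1}) = \sum_{i=1}^{2p-1}\alpha_i(x-m_i)^{2p-1}$ with integer nodes $0\le m_1<\cdots<m_{2p-1}\le m$ and $\alpha_i\ge 0$. The target statement asks to refine this by allowing one extra node (so $2p$ nodes, the last being $m_{2p}\le m$) and to show that the node set, after inserting the missing (zero-coefficient) exponents appropriately, decomposes into $p-1$ consecutive pairs $(q_k,q_k+1)$ followed by either a single $q_p$ or another pair, with the blocks separated by gaps. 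The engine behind this is Bernstein's technique: one studies how the representing quadrature for $\mu_R$ deforms as $R$ increases toward $R_{m,2p-1}$, and extracts structural constraints from the optimality. I would first translate everything into the polar-form language of Section~3, so that the nodes $m_i$ are read off as the arguments killing the blossom $h_{2p-1}$ of $\mathcal{H}_{2p-1}$; the consecutive-pair structure then should correspond to the blossom vanishing on configurations like $(q_k,q_k,q_k+1,q_k+1,\dots)$, i.e.\ double nodes.

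The key steps, in order, are as follows. First, set $R=R_{m,2p-1}$ and consider the optimal quadrature; by Theorem~\ref{Kmaintheorem} it has at most $2p-1$ positive-weight nodes. Second, following Bernstein, I would argue by a perturbation/extremality argument: if the exponent set did not have the claimed block structure, one could perturb $R$ slightly upward while maintaining an integer-node positive quadrature supported in $[0,m]$, contradicting the definition of $R_{m,2p-1}$ via Corollary~\ref{HR}. Concretely, the number of ``degrees of freedom'' in the system~\eqref{YesSystem} (the $\alpha_k$ and the implicit dependence on $R$) must be balanced against the number of active constraints (nonnegativity of weights, node bounds $m_k\le m$); a node configuration that is not locally rigid in the required sense would admit an $R$-increasing deformation. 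Third, I would show that rigidity forces the nodes to come in the consecutive pairs $(q_k,q_k+1)$: intuitively, a pair of adjacent integer nodes is what makes a weight ``want'' to split across two lattice points be locally frozen, playing the role that double nodes play in Bernstein's continuous setting (this is where the relation~\eqref{hGauss} with the Gaussian quadrature for $\mu_R$, and Proposition~\ref{last-term} controlling the extreme nodes, enter — the largest node is pinned near $\lambda_{p,p}(R)$ and the rest near interior Christoffel structure). Fourth, the two alternatives~\eqref{myconditionm1} and~\eqref{myconditionm2} arise from a parity/counting bookkeeping: $2p-1$ is odd, so after forming $p-1$ pairs there is one leftover node, which either stays as a genuine singleton $q_p$ (with an explicitly missing exponent to make the count $2p$) or is absorbed into a $p$-th pair one of whose coefficients vanishes. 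The gap conditions $q_k+1<q_{k+1}$ are then exactly the statement that distinct blocks cannot merge into longer runs of consecutive integers without collapsing the rigidity.

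The main obstacle I expect is making the Bernstein perturbation argument rigorous in the \emph{bounded-exponent} setting: Bernstein's original analysis of absolutely monotonic functions works with an unbounded sequence of exponents (his ``principal polynomials''), and the excerpt explicitly flags (in the discussion preceding Theorem~\ref{EquivalenceTheorem}) that the constraint $m_k\le m$ is the essential new difficulty. So the delicate point is to track which of the optimality constraints are active — in particular whether the top node hits the boundary $m$ — and to show that in every case the non-active directions still force adjacent integer pairs rather than, say, triples of consecutive integers or isolated nodes in the interior. I would handle this by a careful case analysis on whether $m_{2p}=m$ (which by Corollary~\ref{main-corollary1} is the generic situation unless all zeros of $C_p(\cdot,R_{max})$ are integers), using the monotonicity of Poisson-Charlier zeros in $R$ \cite{area} to control the deformation, and by invoking Proposition~\ref{Prop:Polar} repeatedly to convert ``blossom vanishes on a configuration with a repeated argument'' into ``the node set contains that repeated value as a consecutive-integer block.'' The routine parts — verifying the weight-count bound, the explicit blossom evaluations analogous to~\eqref{positivitycondition}, and the arithmetic of the gap inequalities — I would relegate to short computations.
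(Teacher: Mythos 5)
Your proposal correctly locates the general territory (Bernstein-style continuous deformation of the node set, polar forms as the bookkeeping device, and a final contradiction with optimality), but the contradiction mechanism you propose is not the one the paper uses, and the step you are most confident about is precisely the one the paper has to do the most work on.

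The central issue is your step two. You propose that a non-block configuration ``would admit an $R$-increasing deformation,'' contradicting the maximality of $R_{m,2p-1}$. The paper does \emph{not} increase $R$ in the proof of this theorem; at $R=R_{m,2p-1}$ you are already at the supremum, and a degrees-of-freedom count does not tell you the free direction points toward larger $R$. What the paper does instead is hold $R$ fixed at $R_{m,2p-1}$, deform the top node $m_{2p}$ continuously (first a ``descending'' then an ``ascending'' process), and at the end produce a \emph{second} valid positive representation of $\mathcal{H}_{2p-1}(\cdot;R_{m,2p-1})$ with integer nodes in $[0,m]$ that does satisfy the block form. That second representation yields a second element of $\Pi_{m,2p-1}$ attaining the threshold $R_{m,2p-1}$, contradicting the \emph{uniqueness} of $\Phi_{m,2p-1}$ established by Kraaijevanger — not the maximality of $R_{m,2p-1}$. (The increase-$R$ trick you describe is in fact the lever used later, in the proof of Theorem~\ref{fundamental-theorem3}, not here.)

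Your step three — ``rigidity forces consecutive pairs'' — is where the real engine lives, and you leave it as intuition. The actual mechanism in the paper is a concrete sign computation: differentiating the Vandermonde system~\eqref{MyLinearSystem} with respect to $m_{2p}$ and evaluating the resulting ratios of determinants gives the alternating-sign law
\[
(-1)^k\,\frac{\partial \alpha_k}{\partial m_{2p}} < 0, \qquad k=1,\dots,2p,
\]
which is what makes the descent/ascent process terminate only at consecutive-pair configurations. Without this, you cannot conclude that the deformation halts precisely when pairs $(q_k,q_k+1)$ with gaps form; ``double node'' intuition from the continuous Bernstein setting does not transfer on its own, because here the nodes are constrained to $\{0,1,\dots,m\}$ and the obstruction to further motion is a positivity constraint on a weight, not a collision of nodes. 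Finally, the ascending phase of the paper's argument requires ruling out a ``forbidden configuration'' (equation~\eqref{forbidenCon}) via a positivity-of-integral argument against $\mu_{R_{m,2p-1}}$; your sketch has nothing playing that role, so even granting the sign lemma, the deformation could stall at a non-block configuration and the contradiction would not materialize.

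In short: the high-level theme is right and the references you reach for (polar forms, Proposition~\ref{Prop:Polar}, Proposition~\ref{last-term}, Gaussian quadrature) are the correct toolkit, but you are missing (i) the correct target of the contradiction (uniqueness of $\Phi_{m,2p-1}$, not maximality of $R_{m,2p-1}$), (ii) the alternating-sign derivative lemma that powers the deformation, and (iii) the forbidden-configuration argument that guarantees the ascending phase completes. These are not routine fill-ins — they constitute the substance of the proof.
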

\begin{proof}
	The strategy of the proof consists in showing that, if in the representation (\ref{structureP}) 
	of the optimal threshold polynomial, the sequence of integers $(m_1,m_2,\ldots,$ $m_{2p-1},m_{2p})$ 
	satisfy none of the conditions (\ref{myconditionm1}) and (\ref{myconditionm2}) then,
	starting from this representation, we can construct another representation of the optimal threshold polynomial
	whose exponents satisfy either (\ref{myconditionm1}) or (\ref{myconditionm2}). This will eventually contradict
	the uniqueness of the optimal threshold polynomial and thus conclude 
	the proof of the theorem. According to Theorem \ref{EquivalenceTheorem}, equation (\ref{structureP}) is 
	equivalent to the linear system
	\begin{equation}
	\label{MyLinearSystem}
	\left\{
	\TABbinary\tabbedCenterstack[l]{
		\alpha_1 +& \alpha_2 +& \ldots +& \alpha_{2p}  &= B_0(R_{m,2p-1}) \\
		\alpha_1 m_1 +& \alpha_2 m_2 +& \ldots +& \alpha_{2p} m_{2p}  &=  B_1(R_{m,2p-1})\\
		\ldots & \ldots & \ldots &  &\ldots \\ 
		\alpha_1 m_1^{k} +& \alpha_2 m_2^{k} +& \ldots +& \alpha_{2p} m_{2p}^{k}  &=  B_k(R_{m,2p-1}) \\       
		\ldots & \ldots & \ldots &  &\ldots  \\ 
		\alpha_1 m_1^{2p-1} +& \alpha_2 m_2^{2p-1} +& \ldots +& \alpha_{2p} m_{2p}^{2p-1}  &=   B_{2p-1}(R_{m,2p-1}) 
	}\right.
	\end{equation}
	Without loss of generality, we assume that $\alpha_{2p} >0$. In the above linear system, let us fix  
	all the integers $m_{k}, k < 2p$ and change continuously the value of $m_{2p}$ viewed as a real number.
	The variation of the coefficients $\alpha_1,\alpha_2,\ldots,\alpha_{2p}$ satisfies the linear system 
	\begin{equation*}
	\left\{
	\TABbinary\tabbedCenterstack[l]{
		\frac {\partial \alpha_1}{\partial m_{2p}} + & \frac {\partial \alpha_2}{\partial m_{2p}} 
		+& \ldots +& \frac {\partial \alpha_{2p}}{\partial m_{2p}}  &= 0 \\
		& & & & \\
		m_1 \frac {\partial \alpha_1}{\partial m_{2p}} + & m_2 \frac {\partial \alpha_2}{\partial m_{2p}} 
		+& \ldots +& m_{2p} \frac {\partial \alpha_{2p}}{\partial m_{2p}}  &=  -\alpha_{2p} \\ 
		& \ldots & \ldots & \ldots &  \\        
		& \ldots & \ldots & \ldots &  \\ 
		m_1^{2p-1} \frac {\partial \alpha_1}{\partial m_{2p}} + & m_2^{2p-1} 
		\frac {\partial \alpha_2}{\partial m_{2p}} +& \ldots +& m_{2p}^{2p-1} 
		\frac {\partial \alpha_{2p}}{\partial m_{2p}}  &=  -(2m-1)\alpha_{2p}m_{2p}^{2p-2}. 
	}\right.
	\end{equation*}
	The solution to the above linear system is given by 
	\begin{equation}
	\label{PartialAlpha}
	\frac {\partial \alpha_k}{\partial m_{2p}} = -\frac{\alpha_{2p}}{\Delta} 
	\frac{\partial \Delta_{m_k}}{\partial m}(m_{2p}), \quad k=1,2,\ldots,2p, 
	\end{equation}
	where $\Delta  = \prod_{1 \leq i < j \leq 2p } (m_j - m_i)$ and $\Delta_{m_k}(m)$ is given 
	by the function determinant 
	\[
	\Delta_{m_k}(m) =
	\begin{vmatrix}
	1 & \ldots & 1 & 1 & 1 & \dots & 1 \\
	m_1 & \ldots & m_{k-1} & m & m_{k+1} &\dots & m_{2p}  \\ 
	\ldots & \ldots & \ldots & \ldots &\dots & \ldots  \\
	m_1^{2p-1} & \ldots & m_{k-1}^{2p-1} & m^{2p-1} & m_{k+1}^{2p-1}&  \dots & m_{2p}^{2p-1}
	\end{vmatrix}
	\]
	For $k<2p$, the largest zero of $\Delta_{m_k}$ is $m_{2p}$. 
	Thus the sign of $\frac{\partial \Delta_{m_k}}{\partial m}(m_{2p})$ 
	is the same as the sign of $\Delta_{m_k}(m)$ for $m > m_{2p}$. 
	Thus this sign is positive for even $k$ and negative for odd $k$.
	Therefore, we conclude from (\ref{PartialAlpha}) that 
	\begin{equation}
	\label{signAlpha}
	(-1)^k \frac {\partial \alpha_k}{\partial m_{2p}} < 0, \quad k=1,2,\ldots,2p.
	\end{equation} 
	From (\ref{signAlpha}) we infer that if we increase the value of $m_{2p}$ 
	and solve the corresponding linear system (\ref{MyLinearSystem}) then all the 
	coefficients with odd index $\alpha_{2k-1}, k=1,\ldots,p$, will increase, 
	while the coefficients with even index $\alpha_{2k}, k=1,\ldots,p$, will decrease. 
	The opposite happens if we proceed by decreasing the value of $m_{2p}$.
	Now, assuming that the exponents of the decomposition (\ref{structureP}) satisfy neither 
	(\ref{myconditionm1}) nor (\ref{myconditionm2}), consider the associated system (\ref{MyLinearSystem}).
	We start a {\it{descending process}} by decreasing the value of $m_{2p}$ while avoiding 
	that any of the coefficients $\alpha_k, k=1,2,\ldots,2p-1$, obtained by solving 
	(\ref{MyLinearSystem}), becomes negative. Noting that the missing exponents $m_i$ from (\ref{structureP}) are virtual 
	and can be placed anywhere at will, we can easily deduce that a decrease of $m_{2p}$ is impossible only if 
	the exponents $(m_1,m_2,\ldots,m_{2p-1})$ can be grouped into integers of the form 
	\begin{equation}
	\label{MainGroup}
	(q_1,q_1+1), (q_2,q_2+1), \ldots (q_{p-1},q_{p-1}+1), \quad q_{k}+1 < q_{k+1}, \quad k=1,2,\ldots,p-2,   
	\end{equation}  
	with one explicitly missing exponent (and possibly other missing exponents). 
	From our hypothesis, such a decrease of $m_{2p}$ is then possible. Thus, we decrease the value of $m_{2p}$ until one of the odd coefficients $\alpha_{2k-1}$ vanishes. 
	This eventually happens before the value of $\alpha_{2p}$ vanishes
	due to the fact that if $\alpha_{2p}=0$ before any of the odd coefficients vanishes 
	then it should have been zero before the start of the descending process. 
	Thus once one of the odd coefficients $\alpha_{2k-1}$ vanishes, we replace the corresponding virtual exponent 
	$m_{2k-1}$ by the largest integer $q < m_{2p}$ such that there are an odd number of integers $m_i$ between $q$ and $m_{2p}$ 
	(assuming, for the moment, that such move is possible). Note that a further decrease of $m_{2p}$ will now increase the new value of $\alpha_{2k-1}$ 
	as the index of its corresponding exponent is now even. We continue this descending process until no further decrease 
	of $m_{2p}$ is possible. This is the case only when the exponents $(m_{1},m_{2},\ldots,m_{2p-2})$ can be grouped into integers
	of the form (\ref{MainGroup}). If at the end of the descending process, the real number $m_{2p}$ is an integer 
	then we have found a solution to our linear system (\ref{MyLinearSystem}) where the exponents satisfy 
	condition (\ref{myconditionm1}) and the associated coefficients are non-negative. This contradicts the uniqueness 
	of the optimal threshold polynomial. Let us assume now that at the end of the descending process, the real number 
	$\rho := m_{2p}$ is not an integer. Since, there is a least one missing exponent at the end of the descending process,
	we place this missing exponent at the position of the largest integer $q < m_{2p}$ that is not occupied by another 
	exponent with positive coefficient. We have then a configuration of exponents of the form
	\begin{equation}
	\label{newexponent}
	(q_1,q_1+1), (q_{2},q_{2}+1) \ldots (q_{p-1},q_{p-1}+1), (\bar{q},\rho), 
	\end{equation}           
	where $\bar{q}$ is an integer and $\bar{q} < \rho < \bar{q}+1$. 
	Moreover, the coefficients associated with even index exponents are non-zero.   
	Now we start an {\it{ascending process}} by increasing the value of $\rho$ while solving the
	corresponding linear system (\ref{MyLinearSystem}). In doing so, the coefficients with even index will 
	decrease, while the one with odd index will increase. If we increase $\rho$ until $\bar{q}+1$ without any 
	of the coefficients $\alpha_k$ becomes negative then we would arrive to a solution of the linear system (\ref{MyLinearSystem}) 
	where the exponents satisfy condition (\ref{myconditionm2}) and thus again in contradiction with the uniqueness of the optimal
	threshold polynomial. If during the ascending process of $\rho$ and before $\rho$ reaches $\bar{q}+1$, one of the even 
	index coefficients $\alpha_{2k}$ vanishes, then we replace the associated exponent $q_{k}+1$ by $q_{k}-1$ and continue 
	the ascending process. However, if the site $q_{k}-1$ is already occupied by another exponent with positive 
	coefficient, then we replace it by $q_{k-2}-1$ instead and so on, and then continue the ascending process. 
	The only case where an increase of $\rho$ is no longer possible is when the exponents are grouped into pairs of the form            
	\begin{equation}
	\label{forbidenCon}
	(0,1),(2,3),\ldots,(2i-2,2i-1),(q_{i+1},q_{i+1}+1) \ldots (q_{p-1},q_{p-1}+1),(\bar{q},\rho),
	\end{equation}      
	with the coefficient of one exponent among $(1,3,5,\ldots,2i-1)$, say $k$, equal to zero.
	Let us show that a configuration such as (\ref{forbidenCon}) cannot be reached before 
	$\rho$ reaches $\bar{q}+1$. Let us consider the configuration (\ref{forbidenCon}) with $\rho < \bar{q} + 1$. 
	The associated polynomial
	\begin{equation*}
	\psi(t) =  (t-\bar{q}) (t - \rho) \prod_{j=0,j\not=k}^{2i-1} (t - j) \prod_{j=i+1}^{p-1} (t - q_{j})(t-q_{j}-1)
	\end{equation*} 
	is non-negative on $\mathbb{N}$ and thus we have 
	\begin{equation}
	\label{ZeroIntegral}
	\int \psi(t) d\mu_{R_{m,2p-1}}(t) > 0.
	\end{equation}
	For the configuration (\ref{forbidenCon}), we have 
	\begin{equation}
	\label{tempProof}
	\mathcal{H}_{2p-1}(x;R_{m,2p-1}) = \int (x-t)^{2p-1} d\mu_{R_{m,2p-1}}(t) = \sum_{j=1}^{2p-1} \alpha_j (x - x_{j})^{2p-1},
	\end{equation}
	where the $x_j$'s are the zeros of the polynomial $\psi$ and $\alpha_1,\alpha_2,\ldots,\alpha_{2p-1}$ are non-negative numbers. 
	Evaluating the polar form of both sides of (\ref{tempProof}), we obtain  
	\begin{equation*}
	\int \psi(t) d\mu_{R_{m,2p-1}}(t)  = h_{2p-1}(0,1,\ldots,k-1,k+1,\ldots,q_{p-1}+1,\bar{q},\rho;R_{m,2p-1}) = 0.
	\end{equation*} 
	This contradicts (\ref{ZeroIntegral}). Therefore a configuration of the form 
	(\ref{forbidenCon}) cannot be reached before $\rho$ reaches $\bar{q}+1$. Hence, once $\rho$ reaches $\bar{q}+1$, 
	we obtain a solution of our linear system (\ref{MyLinearSystem}) where the exponents satisfy condition 
	(\ref{myconditionm2}) and where the associated coefficients are non-negative. 
	This again contradicts the uniqueness of the optimal threshold polynomial. This concludes the proof of the theorem. 
\qed 
\end{proof}

\begin{remark}
In Theorem \ref{IntegerCharlier}, we have shown that the exponents of the optimal threshold polynomial 
$\Phi_{m,3}$, with $m$ is a square integer are $m-2\sqrt{m} +1, m $. These exponents can be grouped 
in the form (\ref{myconditionm1}) as 
\begin{equation*}
(m-2\sqrt{m} +1,m-2\sqrt{m} +2),m,
\end{equation*}
where $m-2\sqrt{m} +2$ is a missing exponent. Another interesting example is the optimal threshold polynomial $\Phi_{6,5}$ \cite{[higueras]}. We have $R_{6,5} = 2$ and 
\begin{equation}
\label{hig}
\Phi_{6,5}(x) = \frac{1}{6} + \frac{2}{5}\left(1 + \frac{x}{2}\right) + \frac{4}{9}\left(1 + \frac{x}{2}\right)^{3} 
+ \frac{2}{45}\left(1 + \frac{x}{2}\right)^{6}.
\end{equation}
The exponents in (\ref{hig}) are $0,1,3,6$ and can be grouped in the form  
\begin{equation*}
(0,1),(3,4),6,
\end{equation*}
where $4$ is a missing exponent.

\end{remark}

The representations (\ref{myconditionm1}) and (\ref{myconditionm2}) of the exponents of 
the optimal threshold polynomial are quite similar. For example, if the coefficient 
of $q_{p}$ (or of $q_{p}+1$) is equal to zero in (\ref{myconditionm2}) then the two 
representations coincide. However, for instance a representation where the exponents 
are grouped in the form
\begin{equation}
\label{ExampleConfig}
(2,3),(5,6),(10,11),(12,13) 
\end{equation} 
where the coefficients of the exponents $10,11,12,13$ are positive (and at least one of 
the exponents $2,3,5,6$ is missing) cannot be represented in the from (\ref{myconditionm1}). 
Nevertheless, if the coefficients of $11,12,13$ are positive while the coefficient of $10$
is equal to zero, \ie $10$ is a missing exponent, then we can re-write (\ref{ExampleConfig}) 
in the form (\ref{myconditionm1}) via a shift in the indices,\ie 
\begin{equation*}
(2,3),(5,6),(11,12), 13. 
\end{equation*} 
with $10$ is an explicitly missing exponent. According to this simple observation, we shall provide
a more refined structural property of the optimal threshold polynomial 
$\Phi_{m,2p-1}$ by showing that its exponents can always be represented in the form (\ref{myconditionm1}) 
with $q_{p}=m$. For this we shall need the following proposition.

\begin{proposition}
	\label{NonVanishPair}
	The coefficients of each pair, in the two possible representations (\ref{myconditionm1}) and (\ref{myconditionm2}) 
	of the exponents of the optimal threshold polynomial $\Phi_{m,2p-1}$, cannot be simultaneously equal to zero.
\end{proposition}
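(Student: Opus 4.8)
The plan is to argue by contradiction using the same deformation machinery that drove the proof of Theorem \ref{fundamental-theorem2}, together with the polar-form identity that furnished the key contradiction there. Suppose that in one of the two representations (\ref{myconditionm1}) or (\ref{myconditionm2}) of the exponents of $\Phi_{m,2p-1}$, some pair $(q_k,q_k+1)$ has \emph{both} associated coefficients equal to zero; that is, both $q_k$ and $q_k+1$ are missing exponents. Then the decomposition (\ref{structureP}) actually uses at most $2p-2$ exponents with nonzero coefficients, say $x_1<x_2<\dots<x_r$ with $r\leq 2p-2$, and we still have
\begin{equation*}
\mathcal{H}_{2p-1}(x;R_{m,2p-1}) = \int (x-t)^{2p-1} d\mu_{R_{m,2p-1}}(t) = \sum_{j=1}^{r} \alpha_j (x-x_j)^{2p-1}.
\end{equation*}
The first step is to observe that a representation of $\mathcal{H}_{2p-1}(\cdot;R)$ with only $r\leq 2p-2$ nodes is more rigid than the Gaussian one: by Corollary \ref{hintegral} and the uniqueness of the $p$-point Gaussian quadrature, such a representation must actually \emph{be} a quadrature exact for degree $2p-1$, hence it must have at least $p$ nodes, and a quadrature with fewer than $p$ nodes that is exact up to degree $2p-1$ cannot exist (Gaussian quadrature is optimal). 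So $p\leq r\leq 2p-2$, which already forces $p\geq 2$.

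The second step is to extract a contradiction from the polar-form evaluation, in the spirit of the final paragraph of the proof of Theorem \ref{fundamental-theorem2}. Having two adjacent free sites $q_k,q_k+1$, I will construct an admissible polynomial $\psi$ of degree $2p-1$ whose $2p-1$ zeros are exactly the $x_j$'s together with either $q_k$ or $q_k+1$ — more precisely, use the $r\leq 2p-2$ active nodes plus enough of the neighbouring integers, choosing them so that $\psi$ is a nonnegative polynomial on $\mathbb{N}$ that vanishes at precisely the active nodes. Since $r\leq 2p-2$, there is room to choose $\psi$ of degree exactly $2p-1$ that is $\geq 0$ on $\mathbb{N}$ but does not vanish identically on the support of $\mu_{R_{m,2p-1}}$, so that $\int \psi\,d\mu_{R_{m,2p-1}} > 0$. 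On the other hand, evaluating the polar form $h_{2p-1}(\cdot;R_{m,2p-1})$ of $\mathcal{H}_{2p-1}(\cdot;R_{m,2p-1})$ at the multiset of zeros of $\psi$ and using the decomposition above gives $\int\psi\,d\mu_{R_{m,2p-1}} = h_{2p-1}(\text{zeros of }\psi;R_{m,2p-1}) = 0$, since each product $\prod_{i}(x_j - (\text{zero of }\psi))$ contains the factor $(x_j-x_j)=0$. This is the same contradiction mechanism as around equations (\ref{ZeroIntegral})–(\ref{tempProof}).

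The step I expect to be the main obstacle is the bookkeeping in the second part: making sure that when both coefficients of a pair vanish, the number of genuinely active exponents is small enough (at most $2p-2$) that a degree-$(2p-1)$ admissible $\psi$ with the prescribed zero set exists and is \emph{strictly} positive somewhere on $\mathbb{N}$, while simultaneously having exactly the right multiplicities so that the polar form evaluates to zero. Because $\psi$ must be $\geq 0$ on $\mathbb{Z}$ (or at least on $\mathbb{N}$) and of odd degree $2p-1$, one must arrange its simple real zeros to be doubled or placed off the lattice appropriately; the cleanest route is to take $\psi(t) = (t-a)\prod_{j}(t-x_j)^{?}$ where $a$ is one of the two free integers and the remaining factors are squared as needed, counting degrees to land exactly at $2p-1$ — this is where the constraint $r\leq 2p-2$ is used, and verifying it is consistent requires the minimality argument from the first step. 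Once that combinatorial consistency is pinned down, the contradiction with the uniqueness of $\Phi_{m,2p-1}$ (equivalently, with Corollary \ref{HR} characterizing $R_{m,2p-1}$) closes the proof.
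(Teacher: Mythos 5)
Your high-level strategy matches the paper's: evaluate the polar form of the $\mathcal{H}$-polynomial at a multiset that covers all the active nodes (forcing the value to be $0$ from the Sylvester decomposition), while simultaneously identifying it as the integral of a polynomial that is nonnegative on $\mathbb{N}$ and not identically zero on the support (forcing it to be $>0$). This is exactly the contradiction the paper extracts. However, your second step has a genuine gap, and your specific construction of $\psi$ as written would fail.

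The paper's key move, which you do not take, is to \emph{differentiate} $\mathcal{H}_{2p-1}$ once, obtaining $\mathcal{H}_{2p-2}$ of \emph{even} degree $2p-2$. After removing the pair $(q_{\ell},q_{\ell}+1)$, the surviving active nodes are at most $q_p$ together with the pairs $(q_k,q_k+1)$, $k\neq\ell$, $k\leq p-1$, i.e., at most $2p-3$ of them. The paper evaluates the $(2p-2)$-variable polar form at $\Lambda=(q_1,q_1+1,\dots,\widehat{q_{\ell},q_{\ell}+1},\dots,q_p,q_p+1)$, giving $0$ from the decomposition of $\mathcal{H}_{2p-2}$, while the same evaluation equals $\int\prod_{k\neq\ell}(q_k-t)(q_k+1-t)\,d\mu_{R_{m,2p-1}}>0$, since the integrand has even degree $2p-2$ and is a product of consecutive-integer pairs, hence automatically nonnegative on $\mathbb{Z}$.

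You instead work directly at the odd degree $2p-1$. That can be made to work, but the specific choice you float — take $a$ to be one of the two free integers $q_{\ell}$ or $q_{\ell}+1$ — is wrong: these are (typically) positive integers, and a single linear factor $(t-q_{\ell})$ changes sign at $t=q_{\ell}$, so $\psi$ would be negative on $\{0,1,\dots,q_{\ell}-1\}\subset\mathbb{N}$, destroying the nonnegativity you need for $\int\psi\,d\mu>0$. The parity of the degree is the real obstacle: an odd-degree polynomial nonnegative on $\mathbb{N}$ must place its one unpaired sign change at or below $0$, not at a free interior integer. A correct completion of your approach is to pair $q_p=m$ with $m+1$ (giving the factor $(t-m)(t-m-1)\geq0$ on $\mathbb{Z}$, exactly as the paper implicitly does by including $q_p+1$ in $\Lambda$) and to put the one remaining extra factor at some $u\leq 0$, i.e., $\psi(t)=(t-u)\prod_{k\neq\ell}^{p}(t-q_k)(t-q_k-1)$. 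Then $\psi\geq 0$ on $\mathbb{N}$ and $\int\psi\,d\mu_{R_{m,2p-1}}>0$, while $h_{2p-1}$ evaluated at the $2p-1$ zeros of $\psi$ is $0$. Equivalently, evaluating $h_{2p-1}(\Lambda,u;R_{m,2p-1})$ for $u\leq0$ yields $\int(u-t)\prod_{k\neq\ell}(q_k-t)(q_k+1-t)\,d\mu<0$ on one side and $0$ on the other; this is, up to sign, the paper's calculation with the differentiation made implicit. Finally, your first step ($r\geq p$ from quadrature optimality) is correct but only rules out $p\leq 2$ on its own; for $p\geq 3$ it is not a contradiction, so it cannot replace the second step.
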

\begin{proof}
	We give a proof for representations of the exponents of the form (\ref{myconditionm1}). 
	Representations of the from (\ref{myconditionm2}) can be handled in a similar fashion. 
	Let us assume that the coefficients $\alpha_{2\ell-1}$ and $\alpha_{2\ell}$ ($\ell \leq p-1$) 
	associated with one of the pair $(q_{\ell},q_{\ell}+1)$ are both zero. Then we have 
	\begin{equation}
	\label{Pairs}
	\begin{split}
	& \mathcal{H}_{2p-1}(x; R_{m,2p-1}) = \int (x-t)^{2p-1} d\mu_{R_{m,2p-1}} \\
	& =\alpha_{2p-1} (x - q_{p})^{2p-1} +  \sum_{k =1,k\not=\ell}^{p-1} 
	\alpha_{2k-1}(x - q_{k})^{2p-1} +
	\alpha_{2k}(x - q_{k}-1)^{2p-1}. 
	\end{split}
	\end{equation}  
	Differentiating (\ref{Pairs}) with respect to $x$, we obtain
	\begin{equation}
	\label{Pairs2}
	\begin{split}
	&\mathcal{H}_{2p-2}(x; R_{m,2p-1})   = \int (x-t)^{2p-2} d\mu_{R_{m,2p-1}} \\
	& = \alpha_{2p-1} (x - q_{p})^{2p-2} +  \sum_{k =1,k\not=\ell}^{p-1} 
	\alpha_{2k-1}(x - q_{k})^{2p-2} +
	\alpha_{2k}(x - q_{k}-1)^{2p-2}.
	\end{split}
	\end{equation}
	Denote by $h_{2p-2}(-; R_{m,2p-1})$ the polar form of the polynomial $\mathcal{H}_{2p-2}(.; R_{m,2p-1})$ and 
	$\Lambda = (q_1,q_1+1,\ldots,q_{\ell-1},q_{\ell-1}+1,,q_{\ell+1},q_{\ell+1}+1,\ldots q_{p},q_{p}+1)$. 
	From (\ref{Pairs2}) we obtain the following contradiction 
	\begin{equation*}
	h_{2p-2}(\Lambda;R_{m,2p-1}) = 0 = \int \prod_{k=1,k\not=\ell}^{p} (q_{k} - t)(q_{k}+1 -t) d\mu_{R_{m,2p-1}} >0. 
	\end{equation*}
	This completes the proof.  
\qed
\end{proof}

We are now in a position to give a refined structural property of the optimal threshold polynomial.

\begin{theorem}
	\label{fundamental-theorem3}
	For any positive integers $m$ and $p$ such that $m \geq 2p-1$, 
	the optimal threshold polynomial $\Phi_{m,2p-1}$ has the form 
	\begin{equation}
	\label{structurePRefined2}
	\Phi_{m,2p-1}(x) = \sum_{k=1}^{2p} \alpha_k \left(1 + \frac{x}{R_{m,2p-1}}\right)^{m_k},
	\end{equation}
	where $\alpha_k, k=1,2,\ldots,2p$ are non-negative real numbers and the integers 
	$0\leq m_1<m_2<\ldots<m_{2p-1}\leq m$ can be grouped into the form  
	\begin{equation}
	\label{MainGroup7}
	(q_1,q_1+1), (q_2,q_2+1),\ldots (q_{p-1},q_{p-1}+1), q_p; \; \; q_{k}+1 < q_{k+1}, \; k=1,2,\ldots,p-1,   
	\end{equation} 
	with one explicitly missing exponent (and possibly many missing exponents). Moreover, we have
	$q_{p} = m$ and its coefficient is positive.
\end{theorem} 
\begin{proof}
	From Theorem \ref{fundamental-theorem2}, we know that the exponents $(m_1,m_2,\ldots,m_{2p})$ 
	can be represented in the form (\ref{myconditionm1}) or (\ref{myconditionm2}).  
	Assume that a configuration of exponents of $\Phi_{m,2p-1}$ of the form 
	\begin{equation}
	\label{MainGroup2}
	\begin{split}
	& (q_1,q_1+1),\ldots,(q_k,q_k+1),(q_{k+1},q_{k+1}+1), \\
	& (q_{k+1}+2,q_{k+1}+3),\ldots,(q_{k+1}+2s,q_{k+1}+2s+1), 
	\end{split}
	\end{equation}
	with $k \leq p-1$, $k+1+s = p$ and where the coefficients associated with $q_{k+1},q_{k+1}+1,\ldots,q_{k+1}+2s+1$
	are positive with $q_{k+1} - (q_{k}+1) >1$ is possible. Then we can 
	always assume that the coefficients associated with the first element of each pair in (\ref{MainGroup2}) 
	is positive using the following procedure: If, for example, the coefficient associated with the first element 
	of a pair $(q_{\ell},q_{\ell}+1)$ ($\ell \leq k$) is equal to zero, then according to Proposition (\ref{NonVanishPair}), 
	the coefficient associated with $q_{\ell}+1$ is positive. 
	In this case we change the pair $(q_{\ell},q_{\ell}+1)$ into the pair $(q_{\ell}+1,q_{\ell}+2)$.
	If the site $q_{\ell}+2$ is already occupied by an exponent with positive coefficient then we place $q_{\ell}$ at $q_{\ell}+4$ 
	instead and so on. The fact that we have assumed the existence of a least one free site between $q_{k}+1$ and $q_{k+1}$, i.e; 
	$q_{k+1} - (q_{k}+1) >1$, insures the success of such procedure. Now that the coefficients associated with the first element 
	of each pair in (\ref{MainGroup2}) is positive, we start decreasing the value of $q_{k+1}+2s+1$ and solve the associated 
	linear system (\ref{MyLinearSystem}). As we have shown before, a decrease of $q_{k+1}+2s+1$ will increase the coefficients 
	with even index and decrease the ones with odd index. Therefore, a small decrease of $q_{k+1}+2s+1$ say to $q_{k+1}+2s+1 - \delta$ 
	will render all the coefficients $\alpha_i$ positive. At this stage, we increase the value of $R_{m,2p-1}$ to $R_{m,2p-1}+\epsilon$ in such 
	a way that all the coefficients $\alpha_i$ solution to the new linear system (\ref{MyLinearSystem}) remain positive. 
	and then we bring $q_{k+1}+2s+1-\delta$ to $q_{k+1}+2s+1$ by the same ascending process as in the proof of Theorem (\ref{fundamental-theorem2}). 
	At the end of this procedure, we obtain a solution to the linear system (\ref{MyLinearSystem}) with non-negative coefficients $\alpha_i$ 
	and with $R_{m,2p-1}$ replaced by $R_{m,2p-1}+\epsilon$. This contradicts the very definition of $R_{m,2p-1}$. 
	Therefore, the only possible configurations of the exponents of the optimal threshold polynomial are the ones that are 
	of the form (\ref{myconditionm1}) or of the form (\ref{MainGroup2})
	with $k \leq p-1$,  $k+1+s = p$, but now the coefficient associated with $q_{k+1}$ 
	must be equal to zero while the coefficients associated with $q_{k+1}+1,q_{k+1}+2,\ldots,q_{k+1}+2s+1$ 
	must be positive. The latter configurations can be written in the form (\ref{myconditionm1}) 
	by a single shift of the indices as 
	\begin{equation*}
	\begin{split}
	& (q_1,q_1+1),\ldots,(q_k,q_k+1),(q_{k+1}+1,q_{k+1}+2), \\
	& (q_{k+1}+3,q_{k+1}+4),\ldots,(q_{k+1}+2s-1,q_{k+1}+2s),q_{k+1}+2s.  
	\end{split}
	\end{equation*}
	This proves the first part of the theorem. 
	Let us now prove that in the representation (\ref{MainGroup7}) we have $q_{p} = m$ and that 
	its associated coefficient is positive. According to what we have just proved, the optimal threshold 
	polynomial has the form 
	\begin{equation}
	\label{original}
	\Phi_{m,2p-1}(x) = \sum_{k=1}^{2p-1} \alpha_k \left(1 + \frac{x}{R_{m,2p-1}}\right)^{m_k},
	\end{equation}
	where $(m_1,m_2,\ldots,m_{2p-1}) = (q_1,q_1+1,q_2,q_2+1,\ldots,q_{p-1}+1,q_{p})$.  
	We proceed by contradiction and assume that $m_{2p-1} < m$. According to Theorem \ref{EquivalenceTheorem}, 
	the identity (\ref{original}) is equivalent to 
	\begin{equation*}
	\mathcal{H}_{2p-1}(x;R) = \int (x-t)^{2p-1} d\mu_{R_{m,2p-1}}(t) =  \sum_{k=1}^{2p-1} \alpha_k (x-m_k)^{2p-1}.
	\end{equation*}
	Consider the polynomial $F(x) := \mathcal{H}_{2p}(x,R_{m,2p-1}) = \int (x-t)^{2p} d\mu_{R_{m,2p-1}}(t)$ and denote by 
	$f$ its polar form. We have 
	\begin{equation}
	\label{polarF1}
	\begin{split}
	f(0,x^{[2p-1]}) & = -R_{m,2p-1} \int (x-1-t)^{2p-1} d\mu_{R_{m,2p-1}}(t) \\
	& = -R_{m,2p-1} \sum_{k=1}^{2p-1} \alpha_k (x-(m_k+1))^{2p-1}.
	\end{split}
	\end{equation}
	Hence $f(0,m_1+1,m_2+1,\ldots,m_{2p-1}+1)=0$ and thus according to 
	Proposition \ref{Prop:Polar}, there exist coefficients 
	$\beta_0,\beta_1,\ldots,\beta_{2p-1}$ such that 
	\begin{equation}
	\label{Fpoly1}
	F(x) = \beta_0 x^{2p} + \sum_{k=1}^{2p-1} \beta_k (x - (m_k+1))^{2p}.
	\end{equation}
	Computing $f(0,x^{[2p-1]})$ from (\ref{Fpoly1}) and comparing with (\ref{polarF1}) yields
	\begin{equation*}
	R_{m,2p-1} \sum_{k=1}^{2p-1} \alpha_k (x - (m_k+1))^{2p-1}  =  
	\sum_{k=1}^{2p} \beta_k (x - (m_k+1))^{2p-1}.
	\end{equation*} 
	Thus $\beta_k = \alpha_k \frac{R_{m,2p-1}}{m_k+1} \geq 0$ for $k=1,2,\ldots,2p$.
	Now we prove that $\beta_0 \geq 0$ as follows:
	\begin{equation*}
	\begin{split}
	& f(m_1+1,m_2+1,\ldots,m_{2p-1}+1,m_{2p-1}+2)  = \beta_0 (m_{2p-1} +2) \prod_{k=1}^{2p-1}(m_k+1) \\
	& = \int (m_{2p-1} +2 - t) \prod_{k=1}^{2p-1} (m_k-t) d\mu_{R_{m,2p-1}}(t) \geq 0 
	\end{split}
	\end{equation*}
	since according to the first part of the theorem, the polynomial 
	\begin{equation*}
	P(t) = (m_{2p-1} +2 - t) \prod_{k=1}^{2p-1} (m_k+1 -t)
	\end{equation*}
	satisfies $P(j) \geq 0$ for any $j \in \mathbb{N}$. Therefore, $\beta_0 \geq 0$. Moreover, we have 
	\begin{equation*}
	F'(x) = 2p \mathcal{H}_{2p-1}(x;R_{m,2p-1}) = 2p \left( \beta_{0} x^{2p-1} + \sum_{k=1}^{2p-1} (x - (m_{k}+1))^{2p-1} \right). 
	\end{equation*}
	The last identity is equivalent to 
	\begin{equation}
	\label{ContradictRepresentation}
	\Phi_{m,2p-1}(x) = \beta_0 + \sum_{k=1}^{2p-1} \beta_k \left(1 + \frac{x}{R_{m,2p-1}}\right)^{m_k+1}.
	\end{equation}
	Since $\beta_j \geq 0, j=0,\ldots,2p$ and $m_{j} +1 \leq m$ for $j=1,\ldots,2p$, the representation 
	(\ref{ContradictRepresentation}) contradicts the uniqueness of the optimal threshold polynomial. 
	Thus, we conclude that $m_{2p-1} = m$ and $\alpha_{2m-1} > 0$. 
\qed 
\end{proof}

We will find it convenient to re-write Theorem \ref{fundamental-theorem3} in the 
equivalent form stated in Theorem \ref{fundamental-corollary} (See Introduction).

\begin{example}
	Using the algorithm of Section 8, it can be shown that $R_{200,5} \simeq 175.8348$ is the unique 
	positive zero of the polynomial equation
	\begin{equation*}
	R^5 - 852R^4 + 291352R^3 - 49988400R^2 + 4303437600R - 148719648000 = 0.
	\end{equation*} 
	The optimal threshold polynomial $\Phi_{200,5}$ is given by (\ref{introth}) with
	\begin{equation*} 
	(m_1,m_2,m_3,m_4,m_5) = (154,155,176,177,200)
	\end{equation*}
	and $(\alpha_1,\alpha_2,\alpha_3,\alpha_4,\alpha_5) = (0.1846,0.0007,0.3320,0.3336,0.1491)$. 
	The structural property of $\Phi_{200,5}$ confirms the statement given in Theorem \ref{fundamental-corollary}. 
\end{example}

Theorem \ref{fundamental-corollary} shows in particular that for any positive 
integer $n$, $R_{m,n}$ is a strictly increasing function of the parameter $m \geq n$. We 
should also point out that the findings in Theorem \ref{fundamental-corollary} can be used 
to significantly improve Step $1$ in Kraaijevanger's algorithm as it considerably reduces 
the number of integer sequences to be considered in Step 1 of the algorithm.

As a first application of Theorem \ref{fundamental-theorem2},
we prove Theorem \ref{MainTheorem3} (See Introduction)

\smallskip 

\noindent
{\bf{Proof of Theorem \ref{MainTheorem3}: }} 
To prove that $R_{m+1,2p} \leq R_{m,2p-1}$ we proceed as follows: Let 
the optimal threshold polynomial $\Phi_{m+1,2p}$ be written as 
\begin{equation}
\label{optimalm1}
\Phi_{m+1,2p}(x) = \sum_{k=1}^{2p} \beta_k \left(1 + \frac{x}{R_{m+1,2p}}\right)^{m_k},
\end{equation}
with 
$\beta_k \geq 0$ for $k=1,2,\ldots,2p$ and 
$0 \leq m_1 < m_2 < \ldots < m_{2p} \leq m+1$. Since $\Phi_{m+1,2p}$ belongs to $\Pi_{m+1,2p}$,
its derivative with respect to $x$ belongs to $\Pi_{m,2p-1}$. 
Moreover, taking the derivative of (\ref{optimalm1}), we obtain 
\begin{equation*}
\frac{\partial \Phi_{m+1,2p} }{\partial x}(x) = \sum_{k=1}^{2p} \frac{m_k \beta_k}{R_{m+1,2p}}
\left(1 + \frac{x}{R_{m+1,2p}}\right)^{m_k-1}.
\end{equation*}   
Accordingly, Corollary \ref{HR} enables us to conclude that $R_{m+1,2p} \leq R_{m,2p-1}$.
To prove that $R_{m+1,2p} \geq R_{m,2p-1}$ we proceed exactly as in the proof of the second part of 
Theorem \ref{fundamental-theorem3}. Namely, from the optimal threshold polynomial 
\begin{equation*}
\Phi_{m,2p-1}(x) = \sum_{k=1}^{2p-1} \alpha_k \left(1 + \frac{x}{R_{m,2p-1}}\right)^{m_k},
\end{equation*}
we construct the polynomial $F$ in (\ref{Fpoly1}) with the properties
\begin{equation*}
F(x) := \mathcal{H}_{2p}(x,R_{m,2p-1}) = \int (x-t)^{2p} d\mu_{R_{m,2p-1}}(t) =  
\beta_0 x^{2p} + \sum_{k=1}^{2p-1} \beta_k (x - (m_k+1))^{2p},
\end{equation*}
with $\beta_{k} \geq 0, k=0,1,\ldots,2p$. The last identity shows, 
according to Corollary \ref{HR}, that $R_{m+1,2p} \geq R_{m,2p-1}$. 
The relation (\ref{OptimalRelation}) between the optimal threshold polynomials 
is a direct and simple consequence of the equality $R_{m,2p-1} = R_{m+1,2p}$.
That $R_{m,n}$ are algebraic numbers is a consequence of the fact that 
$R_{m,2p-1}$ is a zero of the polynomial equation in $R$ with integer coefficients 
\begin{equation*}
h_{2p-1}(m_1,m_2,\ldots,m_{2p-2},m;R) = 0,
\end{equation*} 
where $h_{2p-1}(-;R)$ the polar form of the polynomial $\mathcal{H}_{2p-1}(.;R)$.  
\QEDB 

\section{Spectral transformations and optimal threshold factors}
In this section we use the structural property stated in Theorem\ref{fundamental-corollary} 
to gain more insights on the optimal threshold polynomial. This will lead to a highly efficient 
algorithm for the computation of the optimal threshold factors and their associated polynomials.
From now on, we adopt the following notation and terminology. 
For an admissible polynomial $\Omega$, we define the {\it{Christoffel transform measure}} 
$\mu^{\Omega}_{R}$ of $\mu_{R}$  by   
\begin{equation*}
\mu_{R}^{\Omega} = e^{-R} \sum_{j=0}^{\infty} 
\Omega(j) \frac{R^j}{j!}\delta_j.
\end{equation*}
The polynomial $\Omega$ is called the {\it{annihilator polynomial}} of the measure $\mu^{\Omega}_{R}$.
The orthogonal polynomials with respect to the measure $\mu_{R}^{\Omega}$ are denoted by 
$\Pi_{n}^{R,\Omega}, n\geq 0$ and their zeros by
$\lambda_{1,n}^{\Omega}(R) < \lambda_{2,n}^{\Omega}(R)<\ldots<\lambda_{n,n}^{\Omega}(R)$ or simply 
$\lambda_{1,n}^{\Omega} < \lambda_{2,n}^{\Omega}<\ldots<\lambda_{n,n}^{\Omega}$ 
if the real number $R$ is understood within the context.

We shall need the following theorem by Sylvester \cite{syslvester,polya} 

\begin{theorem}(Sylvester). 
	Suppose $0 \not= \beta_k$ for all $k$ and 
	$\gamma_1 < \ldots < \gamma_r, r \geq 2$, are real numbers 
	such that
	\begin{equation}
	\label{SylvesterForm}
	Q(t) = \sum_{k=1}^r \beta_k(x - \gamma_k)^d
	\end{equation}
	does not vanish identically. Suppose the sequence 
	$(\beta_1,\ldots,\beta_r,(-1)^d \beta_1)$ has $C$ changes of 
	sign and $Q$ has $Z$ real zeros, counting multiplicities. Then $Z \leq C$.
\end{theorem}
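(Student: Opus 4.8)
Throughout, write $Z(F)$ for the number of real zeros of a polynomial $F$ counted with multiplicity (so that $Z=Z(Q)$ in the statement), and $V(a_1,\dots,a_N)$ for the number of sign changes of a finite sequence of nonzero reals (so that $C=V(\beta_1,\dots,\beta_r,(-1)^d\beta_1)$). The plan is to prove $Z(Q)\le C$ by induction on $d$, using Rolle's theorem together with a parity count. The structural fact that drives it is that differentiating (\ref{SylvesterForm}) reproduces its shape: $\tfrac1d Q'(x)=\sum_{k=1}^{r}\beta_k(x-\gamma_k)^{d-1}$ has the \emph{same} (nonzero) coefficients $\beta_k$ and the \emph{same} nodes $\gamma_k$, with degree lowered by one; and by Rolle's theorem $Z(Q)\le Z(Q')+1$. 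The inductive hypothesis applied to $\tfrac1d Q'$ bounds $Z(Q')$ by $C':=V(\beta_1,\dots,\beta_r,(-1)^{d-1}\beta_1)$, and since this sequence differs from the one defining $C$ only in its last (phantom) entry, whose sign is reversed, one has $|C-C'|=1$.

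The parity ingredient, valid whenever $Q$ has full degree $d$, is that $Z(Q)\equiv d\equiv C\pmod 2$. Indeed $\operatorname{sign}Q(x)$ tends to $\operatorname{sign}\!\big(\sum_k\beta_k\big)$ as $x\to+\infty$ and to $(-1)^d\operatorname{sign}\!\big(\sum_k\beta_k\big)$ as $x\to-\infty$, so $Q$ changes sign on $\mathbb R$ an even number of times when $d$ is even and an odd number of times when $d$ is odd; that number equals the number of real zeros of odd multiplicity, which has the same parity as $Z(Q)$. Meanwhile $C$ is the sign-change count of a sequence beginning at $\beta_1$ and ending at $(-1)^d\beta_1$, so $C$ is even precisely when $d$ is even. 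Before invoking this I would dispose of the degenerate case $\sum_k\beta_k=0$ (where $\deg Q<d$) by adjoining a term $\varepsilon(x-\gamma_0)^d$ with $\gamma_0<\gamma_1$ and $\operatorname{sign}\varepsilon=\operatorname{sign}\beta_1$: the degree then becomes exactly $d$, the augmented coefficient sequence $(\varepsilon,\beta_1,\dots,\beta_r,(-1)^d\varepsilon)$ has the same number of sign changes as $(\beta_1,\dots,\beta_r,(-1)^d\beta_1)$, and one lets $\varepsilon\to0^{+}$.

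For the full-degree case the induction on $d$ proceeds as follows. The base case $d=0$ is immediate: $Q$ is a nonzero constant, $Z(Q)=0\le C$. For the step, Rolle and the inductive hypothesis give $Z(Q)\le Z(Q')+1\le C'+1\le C+2$. If $\beta_r$ and $(-1)^d\beta_1$ have opposite signs then $C'=C-1$, and already $Z(Q)\le C$. If they share a sign then $C'=C+1$ and the crude estimate gives only $Z(Q)\le C+2$; together with the parity identity $Z(Q)\equiv C\pmod 2$ this leaves the two possibilities $Z(Q)\le C$ or $Z(Q)=C+2$, and the outstanding task is to rule out $Z(Q)=C+2$.

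The step I expect to be the main obstacle is precisely this exclusion: one must extract one more unit from Rolle's inequality in the ``same sign'' case, which is achieved by a finer accounting of the real zeros of the successive derivatives $Q',Q'',\dots$ relative to the real-zero locus of $Q$ --- in essence a Budan--Fourier argument for the system $\{(x-\gamma_1)^d,\dots,(x-\gamma_r)^d\}$, which is an extended Chebyshev system on every interval avoiding the nodes $\gamma_k$. A secondary, more routine, point needing care is the passage $\varepsilon\to0^{+}$ in the reduction to full degree, since real zeros counted with multiplicity can in principle be lost in a limit that sends some of them to infinity; this is most safely handled by carrying the entire argument out in the Budan--Fourier framework from the start, as in the classical treatments \cite{syslvester,polya}.
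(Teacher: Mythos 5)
The paper itself does not prove Sylvester's theorem; it is stated as a classical result and cited to \cite{syslvester,polya}, so there is no internal argument to compare against. Judged on its own, your proposal is an honest and sensible sketch, but it is not a proof: you identify the gap yourself, and it is a real one, not a formality. Concretely, the induction on $d$ via Rolle combined with the parity identity $Z(Q)\equiv d\equiv C\pmod 2$ yields $Z\le C$ only when $\beta_r$ and $(-1)^d\beta_1$ have opposite signs. In the same-sign case you obtain $Z\le Z(Q')+1\le C'+1=C+2$, and parity eliminates $Z=C+1$ but leaves $Z=C+2$ on the table. You state explicitly that excluding this case is ``the outstanding task'' and that you ``expect [it] to be the main obstacle,'' then gesture at a Budan--Fourier style refinement without carrying it out. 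That is the heart of the theorem; the remaining assembled pieces (Rolle, parity, the structure-preserving differentiation) are genuinely useful scaffolding, but with the exclusion unproved the argument establishes only the weaker bound $Z\le C+2$.

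A secondary concern, which you also flag, lies in the reduction to full degree. When $\sum_k\beta_k=0$ you perturb to $Q_\varepsilon(x)=\varepsilon(x-\gamma_0)^d+Q(x)$ and want to let $\varepsilon\to 0^+$. The danger is not only that roots of $Q_\varepsilon$ escape to infinity; a real root of $Q$ of multiplicity $m\ge 2$ may split under perturbation into complex conjugate pairs, so $Z(Q_\varepsilon)\ge Z(Q)$ is not automatic, and the inequality $Z(Q)\le\limsup_{\varepsilon\to 0^+}Z(Q_\varepsilon)\le C$ does not follow without additional argument. Your own suggestion --- abandon the limiting device and run a Budan--Fourier count from the start for the extended Chebyshev system $\{(x-\gamma_1)^d,\dots,(x-\gamma_r)^d\}$ on intervals avoiding the $\gamma_k$ --- is indeed the standard route and would resolve both issues simultaneously, but it replaces rather than completes the Rolle-plus-parity induction you set up. As written, the proposal contains the correct ingredients and a correct diagnosis of where they fall short, but the decisive step is missing.
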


Using Theorem \ref{fundamental-corollary} and Sylvester's theorem, we show the following. 

\begin{proposition}
	\label{SingleIteration}
	Let $\Phi_{m,2p-1}$ be the optimal threshold polynomial with optimal threshold factor $R_{m,2p-1}$
	\begin{equation*}
	\Phi_{m,2p-1}(x) = \sum_{k=1}^{2p-1} \alpha_k \left(1 + \frac{x}{R_{m,2p-1}}\right)^{m_k},
	\end{equation*}
	with $0 \leq m_1 < m_2 < \ldots < m_{2p-1} = m$. Let $\lambda_{1,p}<\lambda_{2,p}<\ldots<\lambda_{p,p}$ be the zeros 
	of the Poisson-Charlier polynomial $C(.,R_{m,2p-1})$. Then there exist an odd index $j_{1}\leq 2p-1$ and 
	an integer $k_{1}\leq p$ such that
	\begin{equation*}
	m_{j_{1}} = \floor{\lambda_{k_{1},p}} \quad \textnormal{and} \quad m_{j_{1}+1} = \floor{\lambda_{k_{1},p}}+1,
	\end{equation*} 
	where $\floor{x}$ refers to the greatest integer not exceeding $x$.
\end{proposition}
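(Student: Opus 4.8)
The plan is to exploit Corollary~\ref{hintegral} together with the structural form of $\Phi_{m,2p-1}$ given in Theorem~\ref{fundamental-corollary}. Writing the identity from Corollary~\ref{hintegral},
\begin{equation*}
\mathcal{H}_{2p-1}(x;R_{m,2p-1}) = \sum_{k=1}^{p} \omega_k (x-\lambda_{k,p})^{2p-1} = \sum_{j=1}^{2p-1} \alpha_j (x-m_j)^{2p-1},
\end{equation*}
we have two different Sylvester-type representations of the same polynomial of degree $2p-1$. The left-hand sum has $p$ distinct nodes with all weights $\omega_k>0$; the right-hand sum has at most $2p-1$ distinct integer nodes, and by Theorem~\ref{fundamental-corollary} those nodes are grouped into $p-1$ consecutive pairs $(q_k,q_k+1)$ plus the top node $m$, with $\alpha_{2p-1}>0$. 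First I would pass to polar forms: let $h_{2p-1}(-;R_{m,2p-1})$ denote the polar form of $\mathcal{H}_{2p-1}$, and evaluate it at carefully chosen tuples built from the $\lambda_{k,p}$.

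The key computation is to test the polar form against $\Lambda=(\lambda_{1,p},\ldots,\lambda_{p,p},\lambda_{1,p},\ldots,\lambda_{p-1,p})$ — exactly as in the proof of Proposition~\ref{last-term} — which gives $h_{2p-1}(\Lambda;R_{m,2p-1})=0$; expanding via the right-hand representation yields
\begin{equation*}
\sum_{j=1}^{2p-1} \alpha_j \left(\prod_{i=1}^{p-1}(\lambda_{i,p}-m_j)^2\right)(\lambda_{p,p}-m_j) = 0,
\end{equation*}
and since $\alpha_j\ge 0$ with $m_{2p-1}=m\ge\lambda_{p,p}$, the term with $j=2p-1$ is $\le 0$, forcing at least one other $m_j$ to make its factor $\ge 0$ — but more is true, and the real leverage comes from a dimension/interlacing argument. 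The cleanest route I would try is: suppose no pair $(m_{2\ell-1},m_{2\ell})$ brackets any $\lambda_{k,p}$. Then each $\lambda_{k,p}$ lies in a gap strictly between two consecutive blocks of integer nodes (or outside all of them). Using that the zeros of the Poisson-Charlier polynomial interlace with any $p$-atom positive quadrature and that $\lambda_{1,p},\ldots,\lambda_{p,p}$ are simple, I would build an admissible polynomial $P$ of degree $\le 2p-2$ that is positive at every $m_j$ but whose $\mu_{R_{m,2p-1}}$-integral is nonpositive, contradicting the positivity of the quadrature $\int P\,d\mu = \sum_j \alpha_j P(m_j) > 0$. Concretely $P$ would be taken as a product of squared linear factors vanishing near the $\lambda_{k,p}$ (or $\prod_{i\ne k}(t-\lambda_{i,p})^2$ times a linear factor chosen to exploit the gap), mirroring the construction $\psi$ in the proof of Theorem~\ref{fundamental-theorem2}.

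The main obstacle is making the ``no pair brackets a zero'' hypothesis yield a genuine sign contradiction: the $\lambda_{k,p}$ are real numbers, not integers, so one must argue about where $\lfloor\lambda_{k,p}\rfloor$ sits relative to the integer blocks $(q_\ell,q_\ell+1)$, and it is conceivable a priori that a zero falls inside a block in the half-open sense $q_\ell < \lambda_{k,p} < q_\ell+1$ — which is precisely what we want to prove — versus landing between blocks. I expect the correct argument counts sign changes: apply Sylvester's theorem to the right-hand representation to bound the number of real zeros of appropriate derivatives of $\mathcal{H}_{2p-1}$, compare with the $p$ real zeros $\lambda_{k,p}$ of $C_p(\cdot,R_{m,2p-1})$ (which are zeros of the $(2p-1)/(2p-2)$-th difference structure), and conclude that each zero $\lambda_{k,p}$ must be absorbed by one of the consecutive pairs. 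The delicate point is that the sign pattern $(\alpha_1,\ldots,\alpha_{2p-1},(-1)^{2p-1}\alpha_1)$ of the consecutive-pair configuration has a very constrained number of sign changes, and matching that count against $Z=p$ forces each pair to straddle exactly one $\lambda_{k,p}$; I would carry this out by tracking the pairing structure $(q_k,q_k+1)$ node by node.
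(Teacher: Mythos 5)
Your starting point is right: you compare the Gaussian $p$-node representation of $\mathcal{H}_{2p-1}(\cdot;R_{m,2p-1})$ from Corollary~\ref{hintegral} with the $(2p-1)$-node integer representation coming from Theorem~\ref{fundamental-corollary}, and you correctly sense that a Sylvester-type sign-change argument is the engine of the proof. However, neither of the two concrete routes you sketch actually closes the argument, and one key case is missing entirely.

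First, the positivity route — constructing an admissible $P$ of degree $\le 2p-2$ with $\int P\,d\mu_{R_{m,2p-1}}\le 0$ — is not obviously workable. By Gaussian quadrature, $\int P\,d\mu_R = \sum_k \omega_k P(\lambda_{k,p})$, so a contradiction would require $P(\lambda_{k,p})<0$ somewhere while $P\ge 0$ on $\mathbb{Z}$. Your candidate $\prod_{i\ne k}(t-\lambda_{i,p})^2$ is nonnegative everywhere, so it never produces a negative value; adding a linear factor to go negative forces you above degree $2p-2$ or destroys admissibility. You have not identified the extra structure that would make this succeed.

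Second, the Sylvester route is the one the paper uses, but you misidentify what to count. You propose counting sign changes of $(\alpha_1,\ldots,\alpha_{2p-1},(-1)^{2p-1}\alpha_1)$; since all $\alpha_j\ge 0$, this sequence has at most one sign change, which is always $<2p-1$ and gives no contradiction. The paper instead fixes $j\ne k$, forms $\sum_i\alpha_i(x-m_i)^{2p-1}-\sum_{i\ne j}\omega_i(x-\lambda_{i,p})^{2p-1}=\omega_j(x-\lambda_{j,p})^{2p-1}$, and applies Sylvester to the \emph{mixed} Sylvester form on the left — interleaving $m_i$-nodes with positive coefficients and $\lambda_i$-nodes ($i\ne j$) with negative coefficients. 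Under the hypothesis that no $m_i$ lies in $[\lambda_{k,p},\lambda_{k+1,p}]$, this mixed sequence cannot produce $2p-1$ sign changes, yet the right-hand side has a zero of multiplicity $2p-1$; that is the contradiction. You should also compare against $Z=2p-1$, not $Z=p$, as you write.

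Third, and most importantly, you omit the endpoint and integer-coincidence cases. To deduce a bracketing pair $(m_{j_1},m_{j_1}+1)$ from ``each gap $[\lambda_k,\lambda_{k+1}]$ contains some $m_i$,'' one needs the anchors $m_1\le\lambda_{1,p}$ and $\lambda_{p,p}\le m$ from Proposition~\ref{last-term}, which you do not invoke. And when $\lambda_{k_1,p}$ happens to coincide with the integer $m_{j_1+1}$, the floor condition $m_{j_1}=\lfloor\lambda_{k_1,p}\rfloor$ breaks; the paper resolves this by a Christoffel-transform iteration that removes the offending pair and recurses on a reduced measure, possibly all the way down to the explicit representation (\ref{IntegerRepresentation}). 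This case is a genuine part of the statement and must be handled.
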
 
\begin{proof}
	By Gauss quadrature, we have  
	\begin{equation}
	\label{representation1}
	\int (x-t)^{2p-1} d\mu_{R_{m,2p-1}} = \sum_{i=1}^{p} \omega_{i} (x-\lambda_{i,p})^{2p-1}; 
	\quad \omega_i > 0 \quad \textnormal{for} \quad i=1,2,\ldots,p.
	\end{equation} 
	Moreover, from Theorem \ref{introth}, we know that 
	\begin{equation}
	\label{representation3}
	\int (x-t)^{2p-1} d\mu_{R_{m,2p-1}} = \sum_{i=1}^{2p-1} \alpha_{i} (x-m_{i})^{2p-1},
	\end{equation}
	where the integer sequence $(m_1,m_2,\ldots,m_{2p-3},m_{2p-1})$ satisfies (\ref{introm}).  
	Let us assume that there exists an integer $k$ ($k\leq p-1$)  such that
	\begin{equation}
	\label{NoInterval}
	m_{i} \notin [\lambda_{k,p}, \lambda_{k+1,p}] \quad \textnormal{for any} \quad  i=1,2,\ldots,2p-1.
	\end{equation} 
	Let $j \leq p$ be a positive integer different from $k$. From (\ref{representation1}) 
	and (\ref{representation2}) 
	\begin{equation}
	\label{ApplySylvester}
	\sum_{i=1}^{2p-1} \alpha_{i} (x-m_{i})^{2p-1}- \sum_{i=1,i\not=j}^{p} 
	\omega_{i} (x-\lambda_{i,p})^{2p-1} = \omega_{j} (x-\lambda_{j,p})^{2p-1}. 
	\end{equation} 
	Eliminating the zero coefficients $\alpha_i$ (if any) from the left-hand side 
	of (\ref{ApplySylvester}) and in account of (\ref{NoInterval}), we can easily show that,
	no matter how we place the integers $m_i$ relative to the real numbers $\lambda_{j,p}$ and write 
	the left-hand side of (\ref{ApplySylvester}) in the form (\ref{SylvesterForm}), the number of changes of sign 
	of the obtained sequence $(\beta_1,\ldots,\beta_r,(-1)^{2p-1} \beta_1)$ is less than $2p-1$.
	This contradicts Sylvester's theorem since the number of zeros of the right-hand side of 
	(\ref{ApplySylvester}) is equal to $2p-1$, counting multiplicities. Thus,  
	between $\lambda_{k,p}$ and $\lambda_{k+1,p}$ for $k=1,2,\ldots,p-1$, there is 
	at least one integer from the sequence $(m_1,m_2,\ldots,m_{2p-3},m_{2p-2},m)$. 
	Furthermore, since $m_{1} \leq \lambda_{1}$, $\lambda_{p} \leq m$ 
	(See Proposition \ref{last-term}) and due to (\ref{introm}), we easily conclude 
	that there exist an odd index $j_{1}\leq 2p-1$ and an integer $k_{1}\leq p$ such that 
	$m_{j_{1}} \leq \lambda_{k_{1},p} \leq m_{j_{1}+1} =  m_{j_{1}}+1$. 
	If $\lambda_{k_{1},p} \not= m_{j_{1}+1}$ then  $m_{j_{1}} = \floor{\lambda_{k_{1},p}}$ and $m_{j_{1}+1} = \floor{\lambda_{k_{1},p}}+1$
	and the claim is proved. If  $\lambda_{k_{1},p} = m_{j_{1}+1}$, we take the Christoffel transform 
	$\mu^{\tilde{\Omega}}_{R_{m,2p-1}}$ of $\mu_{R_{m,2p-1}}$ where $\tilde{\Omega}(t) = (t-m_{j_{1}})(t - m_{j_{1}+1})$. 
	Evaluating the polar form to both sides of (\ref{representation1}) and (\ref{representation3}) 
	at $(x^{[2p-3]},m_{j_{1}},m_{j_{1}+1})$, we obtain  
	\begin{equation}
	\label{express1}
	\int (x-t)^{2p-3} d\mu^{{\tilde{\Omega}}}_{R_{m,2p-1}} = 
	\sum_{i=1, i\not= j_{1},j_{1}+1 }^{p-1} \tilde{\omega}_{i} (x-\lambda_{i,p})^{2p-3},
	\end{equation}  
	with $\tilde{\omega}_{i} = \omega_{i} (m_{j_{1}} -\lambda_{i,p})(m_{j_{1}+1} -\lambda_{i,p}) > 0$ and 
	\begin{equation}
	\label{express2}
	\int (x-t)^{2p-3} d\mu^{{\tilde{\Omega}}}_{R_{m,2p-1}} = 
	\sum_{i=1, i\not= j_{1},j_{1}+1 }^{2p-1} \tilde{\alpha}_{i} (x-m_{i})^{2p-3}, 
	\end{equation}  
	with $\tilde{\alpha_i} = (m_{j_{1}} - m_{i})(m_{j_{1}+1} - m_{i}) > 0$. Applying the same arguments 
	as above to (\ref{express1}) and (\ref{express2}). Sylvester's theorem, condition (\ref{introm}) and Theorem \ref{generalquadrature} applied to the measure $d\mu^{{\tilde{\Omega}}}_{R_{m,2p-1}}$, enable us to conclude the existence of an odd index $j_{2}\leq 2p-1$ ($j_{2} \not= j_{1}$) 
	and an integer $k_{2}\leq p$ ($k_{2} \not= k_{1}$) such that $m_{j_{2}} \leq \lambda_{k_{2},p} \leq m_{j_{2}+1} =  m_{j_{2}}+1$. 
	If $\lambda_{k_{2},p} < m_{j_{2}+1}$ then the proposition is proved. If $\lambda_{k_{2},p} =m_{j_{2}+1}$ then we 
	iterate the above process by taking the Christoffel transform of $\mu_{R_{m,2p-1}}$ 
	with respect to the annihilator polynomial $(t-m_{j_{1}})(t - m_{j_{1}+1})(t-m_{j_{2}})(t - m_{j_{2}+1})$ 
	and so on. In the course of this iterative process we either find an odd index $j_{s}\leq 2p-1$ and an integer $k_{s}\leq p$ such that 
	$m_{j_{s}} \leq \lambda_{k_{s},p} < m_{j_{s}+1} =  m_{j_{s}}+1$ and in this case the proposition is proved, or we find  
	that all the zeros $\lambda_{1,p} <\lambda_{2,p} <\ldots<\lambda_{p,p}$ of the Charlier-Poisson polynomial $C(.,R_{m,2p-1})$ are integers
	with $m_{2k-1} = \lambda_{k,p}$ for $k=1,2,\ldots,p$ and the coefficients associated with $m_{2k}, k=1,2,\ldots,p-2$ are equal 
	to zero. In this case we write the integer sequence $(m_1,m_2,\ldots,m_{2p-1})$ in a form that answers the 
	claim of the proposition,\ie 
	\begin{equation}
	\label{IntegerRepresentation}
	(\lambda_{1,p},\lambda_{1,p}+1,\ldots,\lambda_{p-1,p},\lambda_{p-1,p}+1,\lambda_{p,p}).
	\end{equation}    
	That the integers in the representation (\ref{IntegerRepresentation}) are pairwise distinct is a consequence 
	of the well-known fact that there is at least one integer between two consecutive zeros of 
	discrete orthogonal polynomials \cite{karlin}. 
\qed         
\end{proof}

One can iterate Proposition \ref{SingleIteration} as follows. We know 
from Proposition \ref{SingleIteration} that there exist an odd index $j_{1}\leq 2p-1$ and an integer 
$k_{1}\leq p$ such that $m_{j_{1}} = \floor{\lambda_{k_{1},p}}$ and $m_{j_{1}+1} = \floor{\lambda_{k_{1},p}}+1$. 
Define the annihilator polynomial $\Omega_{1}(t) = (m_{j_{1}} -t) (m_{j_{1}+1} -t)$. 
By Gauss quadrature with respect to the Christoffel transform measure $\mu_{R_{m,2p-1}}^{\Omega_{1}}$, we have 
\begin{equation*}
\int (x-t)^{2p-3} d\mu^{\Omega_{1}}_{R_{m,2p-1}} = \sum_{i=1}^{p-1} \omega^{1}_{i} (x-\lambda^{\Omega_{1}}_{i,p-1})^{2p-3}, 
\quad \omega^{1}_i > 0 \quad \textnormal{for} \quad i=1,2,\ldots,p-1.
\end{equation*}   
and evaluating the polar form to both sides of (\ref{representation2}) at $(m_{j_{1}},m_{j_{1}+1},x^{[2p-3]})$ yields 
\begin{equation*}
\int (x-t)^{2p-3} d\mu^{\Omega_{1}}_{R_{m,2p-1}} = 
\sum_{i=1, i\not= j_1,j_1+1}^{2p-1} \alpha^{1}_{i} (x-m_{i})^{2p-3}, \;
\end{equation*}
with $\alpha^{1}_{i}= \alpha_i (m_{j_{1}}- m_{i})(m_{j_{1}+1} - m_{i}) > 0$.
Thus using the same arguments as in the proof of Proposition \ref{SingleIteration}, 
we conclude that there exist an odd integer $j_2 \leq 2p-3$ ($j_2 \not=j_1,j_{1}+1$)
and an integer $k_2 \leq p-1$ such that 
\begin{equation*}
m_{j_{2}} = \floor{\lambda^{\Omega_{1}}_{k_{2},p-1}} \quad \textnormal{and} \quad m_{j_{2}+1} = 
\floor{\lambda^{\Omega_{1}}_{k_{2},p-1}}+1.
\end{equation*} 
We can again iterate the same argument this time with the annihilator polynomial 
$\Omega_{2}(t) = (m_{j_{1}} -t) (m_{j_{1}+1} -t) (m_{j_{2}} -t) (m_{j_{2}+1} -t)$. 
Obviously, the above process terminates after $(p-1)$ iterations and leads to the 
following theorem.

\begin{theorem}
	\label{MultipleIteration}
	Let $\Phi_{m,2p-1}$ the optimal threshold polynomial with optimal threshold factor $R_{m,2p-1}$
	\begin{equation*}
	\Phi_{m,2p-1}(x) = \sum_{k=1}^{2p-1} \alpha_k \left(1 + \frac{x}{R_{m,2p-1}}\right)^{m_k}.
	\end{equation*}
	We can arrange the integer sequence $(m_1,m_2,\ldots,m_{2p-3},m_{2p-2},m)$ as
	\begin{equation*}
	(m_{j_1},m_{j_{1}+1},m_{j_2},m_{j_{2}+1}\ldots m_{j_p},m_{j_{p}+1},m)
	\end{equation*}
	such that for any $1 \leq \ell \leq p$, there exists an integer $1 \leq k_{\ell} \leq p -\ell +1$
	such that    
	\begin{equation}
	\label{RelativeLocation}
	m_{j_{\ell}} = \floor{\lambda^{\Omega_{\ell-1}}_{k_{\ell},p-\ell+1}} \quad \textnormal{and} \quad 
	m_{j_{\ell}+1} =\floor{\lambda^{\Omega_{\ell-1}}_{k_{\ell},p-\ell+1}}+1,
	\end{equation} 
	where $(\lambda_{1,p-\ell+1}^{\Omega_{\ell-1}},\lambda_{2,p-\ell+1}^{\Omega_{\ell-1}},\ldots,
	\lambda_{p-\ell+1,p-\ell+1}^{\Omega_{\ell-1}})$ are the zeros of the degree $(p-\ell+1)$ 
	orthogonal polynomial $\Pi^{\Omega_{\ell-1}}_{p-\ell+1}$ where the annihilator polynomial 
	$\Omega_{\ell-1}$ is given by $\Omega_{\ell-1}(t) = \prod_{i=1}^{\ell-1} (m_{j_{i}}-t)(m_{j_{i}+1}-t)$
	and $\Omega_{0}(t) \equiv 1$.
\end{theorem} 

\begin{figure*}[h!]
	\hskip -0.7 cm
	\begin{overpic}[width=0.57\textwidth]{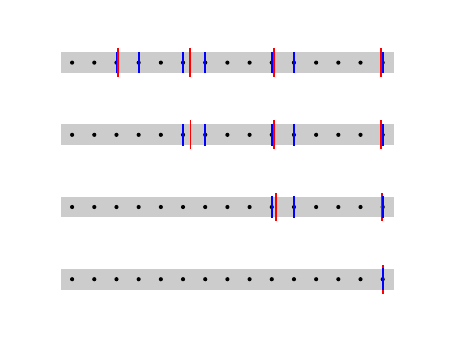}
		\put(10,67){\small{$(a)$}}
		\put(7.3,61){\small{$(1)$}}
		\put(7.3,45){\small{$(2)$}}
		\put(7.3,30){\small{$(3)$}}
		\put(7.3,15){\small{$(4)$}}
	\end{overpic}
	\hskip -1.2 cm
	\begin{overpic}[width=0.57\textwidth]{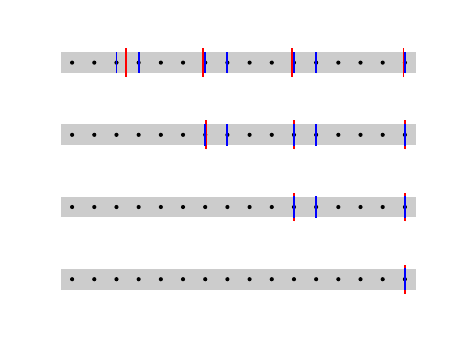}
		\put(10,67){\small{$(b)$}}
	\end{overpic}
	\vskip -1 cm
	\caption{Relative location of the exponents of the optimal threshold polynomial and the zeros of the adpative
		Christoffel transforms of the initial measure. (See Example \ref{ExampleChristoffel} for explanatory details)}
	\label{fig:Christoffel}	
\end{figure*}

Let us illustrate Theorem \ref{MultipleIteration} by an example.
\begin{example}
	\label{ExampleChristoffel}
	Using the algorithm of Section 8, it can be shown that $R_{14,7} \simeq 6.0907$ 
	is the unique positive real zero of the polynomial equation 
	\begin{equation*}
	R^7 - 28R^6 + 380R^5 - 3260R^4 + 19080R^3 - 75960R^2 + 189360R - 226800 =0.
	\end{equation*}
	Moreover, the exponents of the optimal threshold polynomial $\Phi_{14,7}$ are 
	\begin{equation*}
	(m_1,m_2,m_3,m_4,m_5,m_6,m_{7}) = (2,3,5,6,9,10,14).
	\end{equation*} 
	Figure \ref{fig:Christoffel} ((a);(1)) shows the location of the exponents 
	$m_i, i=1,2,\ldots,7$ (blue bars) relative to the location of the zeros of the 
	Poisson-Charlier polynomial $C_{4}(.,R_{14,7})$ (red bars). According to Theorem 
	\ref{MultipleIteration}, there is a least one zero of $C_{4}(.,R_{14,7})$ that is between exponents 
	of the form $(q,q+1)$ of the optimal threshold polynomial. In this specific case, each zero 
	(except for the largest one) of $C_{4}(.,R_{14,7})$ is between two exponents of the form 
	$(q,q+1)$ of $\Phi_{14,7}$ (see Figure \ref{fig:Christoffel} ((a);(1)).
	Figure \ref{fig:Christoffel} ((a);(2)) shows the location of the zeros of the degree $3$ 
	orthogonal polynomial $\Pi_{3}^{\Omega_{1}}$ of the  Christoffel transform measure $\mu^{\Omega_{1}}_{R_{m,2p-1}}$  
	associated with the annihilator polynomial $\Omega_{1}(t)= (m_{1}-t)(m_{2}-t)$ (red bars) and  
	the location of the remaining exponents $(m_3,m_4,m_5,m_6,m_7)$ of $\Phi_{14,7}$. 
	Here again and in accordance with Theorem \ref{MultipleIteration}, there exists a least one zero 
	of $\Pi_{3}^{\Omega_{1}}$ that is between exponents of the form $(q,q+1)$ of the remaining exponents.
	In this specific example, each zero (except for the largest one) of $\Pi_{3}^{\Omega_{1}}$ 
	is between two exponents of the form $(q,q+1)$ (see Figure \ref{fig:Christoffel}, (a),(2)). 
	Similarly, Figure \ref{fig:Christoffel} ((a);(3)) shows the zeros of $\Pi_{2}^{\Omega_2}$ with 
	annihilator polynomial $\Omega_{2}(t) = (t-m_1)(t-m_2)(t-m_3)(t-m_4)$ relative to the remaining 
	exponents $(m_5,m_6,m_7)$. Finally as                        
	\begin{equation*}
	h_7(m_1,m_2,m_3,m_4,m_5,m_6,m_7;R_{14,7}) =0,
	\end{equation*}
	the zero of $\Pi_{1}^{\Omega_{3}}$ with $\Omega_{3}(t) = \prod_{i=1}^{6}(t-m_i)$ is equal to $m=14$. 
	This is confirmed by Figure \ref{fig:Christoffel}, ((a);(4)). 
	This specific example offers various choice for the exponents where we can perform 
	the Christoffel transformation at each stage of the iterative process. 
	This is in contrast with the case of $R_{15,7}$ and its associated polynomial $\Phi_{15,7}$.  
	Using the algorithm described in Section 8, we find that $R_{15,7} \simeq 6.8035$ 
	is the unique positive real zero of the polynomial equation 
	\begin{equation*}
	R^7 - 33R^6 + 516R^5 - 4956R^4 + 31500R^3 - 132300R^2 + 340200R - 415800 =0.
	\end{equation*}
	The exponents of the optimal threshold polynomial $\Phi_{15,7}$ are 
	\begin{equation*}
	(m_1,m_2,m_3,m_4,m_5,m_6,m_{7}) = (2,3,6,7,10,11,15).
	\end{equation*}          
	As can be seen from Figure 1 (b), at each stage of the iterative process described in Theorem \ref{MultipleIteration} 
	there exists only one zero of the Christoffel transform orthogonal polynomials that is between exponents of the form 
	$(q,q+1)$ of the optimal threshold polynomial.
\end{example} 
\subsection{Integral spectrum and integral spectral radius}
Let $\Omega$ be an admissible polynomial and $\lambda_{1,n}^{\Omega}(R),\ldots,
\lambda_{n,n}^{\Omega}(R)$ the zeros of the orthogonal polynomial $\Pi_{n}^{R,\Omega}$.  

\begin{theorem}
	\label{StrictlyIncreasing}
	The functions $\lambda_{k,n}^{\Omega}(R), k=1,2,\ldots,n,$ are strictly 
	increasing functions with respect to the parameter $R$. 
\end{theorem}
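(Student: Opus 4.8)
The statement is a discrete, $R$‑parametrized instance of the classical theorem on the monotone dependence of the zeros of orthogonal polynomials on a parameter (see \cite{szego}), and the plan is to adapt that argument to the family $\mu_{R}^{\Omega}$. Write $w(j;R)=e^{-R}\,\Omega(j)\,R^{j}/j!$ for the mass that $\mu_{R}^{\Omega}$ attaches to the point $j$, so that on the support $\{j\in\mathbb{N}:\Omega(j)>0\}$ one has $\partial_{R}\log w(j;R)=j/R-1$, which is a \emph{strictly increasing} function of $j$; note this support is infinite, since $\Omega$ is a non-zero polynomial and hence eventually positive. First I would record that, because the moments $\int t^{k}\,d\mu_{R}^{\Omega}(t)=e^{-R}\sum_{j}\Omega(j)j^{k}R^{j}/j!$ are real-analytic in $R>0$ and $\mu_{R}^{\Omega}$ has infinitely many mass points, the monic orthogonal polynomials $\Pi_{n}^{R,\Omega}$ and their simple zeros $\lambda_{k,n}^{\Omega}(R)$ depend real-analytically on $R$; in particular $\dot\lambda_{k,n}^{\Omega}:=\partial_{R}\lambda_{k,n}^{\Omega}(R)$ is well defined, and it suffices to prove $\dot\lambda_{k,n}^{\Omega}>0$ for every $R>0$ and every $k$.

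Next I would differentiate the identity $\Pi_{n}^{R,\Omega}\big(\lambda_{k,n}^{\Omega}(R);R\big)=0$ with respect to $R$, obtaining $\dot\lambda_{k,n}^{\Omega}=-\dot\Pi_{n}^{R,\Omega}(\lambda_{k,n}^{\Omega})\big/(\Pi_{n}^{R,\Omega})'(\lambda_{k,n}^{\Omega})$, where the dot denotes $\partial_{R}$ and the prime $\partial_{x}$. Since $\Pi_{n}^{R,\Omega}$ is monic, $\dot\Pi_{n}^{R,\Omega}$ has degree at most $n-1$; expanding it in the orthogonal basis $\{\Pi_{j}^{R,\Omega}\}_{j\le n-1}$ and differentiating the relations $\int \Pi_{n}^{R,\Omega}\Pi_{j}^{R,\Omega}\,d\mu_{R}^{\Omega}=0$ (using $\partial_{R}\!\int f\,d\mu_{R}^{\Omega}=\int f\,\partial_{R}\!\log w\,d\mu_{R}^{\Omega}$ for $R$-independent $f$, and that $\partial_{R}\Pi_{j}^{R,\Omega}$ has degree $<n$) gives
\[
\dot\Pi_{n}^{R,\Omega}(x)=-\int \Pi_{n}^{R,\Omega}(t)\,K_{n-1}(x,t)\,\partial_{R}\log w(t;R)\,d\mu_{R}^{\Omega}(t),
\]
where $K_{n-1}(x,t)=\sum_{j=0}^{n-1}\Pi_{j}^{R,\Omega}(x)\Pi_{j}^{R,\Omega}(t)\big/\|\Pi_{j}^{R,\Omega}\|^{2}$ is the Christoffel--Darboux kernel of $\mu_{R}^{\Omega}$ (with $\|\Pi_{j}^{R,\Omega}\|^{2}=\int(\Pi_{j}^{R,\Omega})^{2}\,d\mu_{R}^{\Omega}$). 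Evaluating at $x=\lambda_{k,n}^{\Omega}$ and using the Christoffel--Darboux formula, which since $\Pi_{n}^{R,\Omega}(\lambda_{k,n}^{\Omega})=0$ collapses $K_{n-1}(\lambda_{k,n}^{\Omega},t)$ to $\Pi_{n-1}^{R,\Omega}(\lambda_{k,n}^{\Omega})\,\Pi_{n}^{R,\Omega}(t)\big/\big(\|\Pi_{n-1}^{R,\Omega}\|^{2}(t-\lambda_{k,n}^{\Omega})\big)$, and then writing $\Pi_{n}^{R,\Omega}(t)=(\Pi_{n}^{R,\Omega})'(\lambda_{k,n}^{\Omega})\,(t-\lambda_{k,n}^{\Omega})\,\mathcal{L}_{k}(t)$ with $\mathcal{L}_{k}$ the fundamental Lagrange polynomial at the nodes $\{\lambda_{i,n}^{\Omega}\}$, I arrive at the compact formula
\[
\dot\lambda_{k,n}^{\Omega}=K_{n-1}(\lambda_{k,n}^{\Omega},\lambda_{k,n}^{\Omega})\int \big(t-\lambda_{k,n}^{\Omega}\big)\,\mathcal{L}_{k}(t)^{2}\,\partial_{R}\log w(t;R)\,d\mu_{R}^{\Omega}(t).
\]

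It then remains to show that the right-hand side is strictly positive. The prefactor equals $\sum_{j=0}^{n-1}\Pi_{j}^{R,\Omega}(\lambda_{k,n}^{\Omega})^{2}/\|\Pi_{j}^{R,\Omega}\|^{2}>0$ (already the $j=0$ term is positive). For the integral, $(t-\lambda_{k,n}^{\Omega})\mathcal{L}_{k}(t)^{2}$ is a polynomial of degree $2n-1$, so the $n$-point Gauss quadrature for $\mu_{R}^{\Omega}$ integrates it exactly, giving $\int(t-\lambda_{k,n}^{\Omega})\mathcal{L}_{k}(t)^{2}\,d\mu_{R}^{\Omega}=\omega_{k}\lambda_{k,n}^{\Omega}-\lambda_{k,n}^{\Omega}\omega_{k}=0$, where $\omega_{k}>0$ is the corresponding Christoffel number. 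Hence one may subtract the constant $\partial_{R}\log w(\lambda_{k,n}^{\Omega};R)$ from $\partial_{R}\log w(t;R)$ inside the integral without changing its value, and the resulting integrand $\big(t-\lambda_{k,n}^{\Omega}\big)\big(\partial_{R}\log w(t;R)-\partial_{R}\log w(\lambda_{k,n}^{\Omega};R)\big)\mathcal{L}_{k}(t)^{2}$ is everywhere $\ge 0$ on the support, because $\partial_{R}\log w(\,\cdot\,;R)$ is strictly increasing, and it is $>0$ at all but the finitely many mass points that equal $\lambda_{k,n}^{\Omega}$ or a zero of $\mathcal{L}_{k}$; since the support is infinite, the integral is strictly positive. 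Thus $\dot\lambda_{k,n}^{\Omega}>0$, which is the assertion. The one place requiring care is the Christoffel--Darboux bookkeeping that produces the compact formula, together with the verification that the prefactor $K_{n-1}(\lambda_{k,n}^{\Omega},\lambda_{k,n}^{\Omega})$ is positive; once the formula for $\dot\lambda_{k,n}^{\Omega}$ is in place, everything else is driven by the elementary monotonicity of $j\mapsto j/R-1$ and the exactness of Gauss quadrature.
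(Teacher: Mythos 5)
Your proof is correct, and it follows the same high-level idea as the paper — Markov's classical argument for the monotone dependence of zeros of orthogonal polynomials on a parameter — but the bookkeeping is genuinely different. The paper differentiates the $n$-point Gauss quadrature identity with respect to $R$, plugs in the auxiliary polynomial $P_k(t)=\big(\Pi_n^{R,\Omega}(t)\big)^2/(t-\lambda_{k,n}^{\Omega})$ so that all but the $k$-th term on the quadrature side drop out, and then exploits the explicit shift structure of the Poisson--Charlier weight: the crucial manipulation there is the identity $\int P_k(t+1)\Omega(t+1)\,d\mu_R(t)=\tfrac{1}{R}\int t\,P_k(t)\Omega(t)\,d\mu_R(t)$, which collapses the left side directly to $\tfrac{1}{R}\int\big(\Pi_n^{R,\Omega}\big)^2\,d\mu_R^{\Omega}>0$. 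You instead differentiate the defining equation $\Pi_n^{R,\Omega}(\lambda_{k,n}^{\Omega})=0$, express $\partial_R\Pi_n^{R,\Omega}$ via the Christoffel--Darboux kernel, and reach the compact formula $\dot\lambda_{k,n}^{\Omega}=K_{n-1}(\lambda_{k,n}^{\Omega},\lambda_{k,n}^{\Omega})\int(t-\lambda_{k,n}^{\Omega})\mathcal{L}_k(t)^2\,\partial_R\log w\,d\mu_R^{\Omega}$; positivity then comes from the degree-$(2n-1)$ exactness of Gauss quadrature (to subtract off a constant from $\partial_R\log w$) together with the observation that $\partial_R\log w(t;R)=t/R-1$ is increasing. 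Both proofs hinge on the same underlying integral $\int \big(\Pi_n^{R,\Omega}\big)^2/(t-\lambda_{k,n}^{\Omega})\cdot(\cdot)\,d\mu_R^{\Omega}$; the paper's route is shorter here because the Poisson--Charlier shift identity gives the positivity in one step, while your route isolates exactly which property of the weight is used (the monotonicity of the logarithmic $R$-derivative) and would transfer unchanged to any discrete parametric family with that property — which is precisely the generality the paper flags in its conclusion.
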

\begin{proof}
	The proof of the theorem follows a classical argument by Markov. 
	By Gauss quadrature, for any polynomial $P$ of degree $2n-1$, we have
	\begin{equation*}
	\int P(t) d\mu_{R}^{\Omega}(t) = \sum_{i=1}^{n} \alpha_i(R) P(\lambda_{i,n}^{\Omega}(R)), 
	\end{equation*} 
	or equivalently, 
	\begin{equation}
	\label{Gquad}
	e^{-R} \sum_{j=0}^{\infty} P(j)\Omega(j) \frac{R^{j}}{j!}
	=\sum_{i=1}^{n} \alpha_i(R) P(\lambda_{i,n}^{\Omega}(R)).
	\end{equation} 
	Differentiating (\ref{Gquad}) with respect to the parameter $R$, we obtain 
	\begin{equation}
	\label{MarkovTrick}
	\begin{split}
	-\int P(t) d\mu_{R}^{\Omega}(t) + e^{-R} \sum_{j=0}^{\infty} P(j+1)\Omega(j+1) \frac{R^{j}}{j!} & =  \\
	\sum_{i=1}^{n} \frac{\partial \alpha_i(R)}{\partial R} P(\lambda_{i,n}^{\Omega}(R)) +  \sum_{i=1}^{n} \alpha_i(R) 
	\frac{\partial \lambda_{i,n}^{\Omega}(R)}{\partial R} P'(\lambda_{i,n}^{\Omega}(R)).
	\end{split}
	\end{equation}
	Now we specialize our analysis to the polynomials 
	\begin{equation*}
	P_{k}(t) = \frac{(\Pi_{n}^{R,\Omega}(t))^2}{t-\lambda_{k,n}^{\Omega}(R)}, \quad k=1,2,\ldots,n.
	\end{equation*}
	Each polynomial $P_{k}$ satisfy $P_{k}(\lambda_{i,n}^{\Omega}(R)) =0$
	for any $1 \leq i \leq n$,   $P_{k}'(\lambda_{i,n}^{\Omega}(R)) = 0$ 
	for any $i\not= k$ and $P_{k}'(\lambda_{k,n}^{\Omega}(R)) > 0.$\footnote{Here, the orthogonal polynomials $\Pi_{n}^{R,\Omega}$ are normalized to be monic} 
	Thus identity (\ref{MarkovTrick}) applied to the polynomial $P_{k}$ leads to 
	\begin{equation}
	\label{Integral+1}
	\int P_{k}(t+1) \Omega(t+1) d\mu_R(t) = \alpha_k(R) 
	\frac{\partial \lambda_{k,n}^{\Omega}(R)}{\partial R} P_{k}'(\lambda_{k,n}^{\Omega}(R)).
	\end{equation}
	Noting that 
	\begin{equation*}
	\int P_{k}(t+1) \Omega(t+1) d\mu_R(t) = \frac{1}{R} \int t P_{k}(t) \Omega(t) d\mu_R(t),
	\end{equation*} 
	we have 
	\begin{equation*}
	\begin{split}
	\int P_{k}(t+1) \Omega(t+1) d\mu_R(t) & = \int P_{k}(t+1) \Omega(t+1) d\mu_R(t) 
	- \hskip -0.05 cm  \frac{\lambda_{k,n}^{\Omega}(R)}{R} \int \hskip -0.1 cm P_{k}(t) \Omega(t) d\mu_R(t) \\
	& = \frac{1}{R}\int (t-\lambda_{k,n}^{\Omega}(R)) P_{k}(t)\Omega(t) d\mu_R(t) \\
	& = \frac{1}{R} \int (\Pi_{n}^{R,\Omega}(t))^2 d\mu_{R}^{\Omega}(t) > 0.
	\end{split}
	\end{equation*}
	Thus, from (\ref{Integral+1}), we deduce that $\frac{\partial \lambda_{k,n}^{\Omega}(R)}{\partial R}>0$. 
	This concludes the proof.
\qed
\end{proof}

\begin{definition}
	A finite sequence of non-negative integers ${\bf{n}} = \{n_1,n_2,\ldots,n_{p-1}\}$ 
	is called a $p$-configuration if for $k=1,\ldots, p-1$
	\begin{equation*}
	1 \leq n_{k} \leq p-k+1. 
	\end{equation*} 
\end{definition}
Given a positive real number $R$ and a $p$-configuration ${\bf{n}} = \{n_1,n_2,\ldots,n_{p-1}\}$,
we construct a sequence of integers $(m_1,m_2,\ldots,m_{2p-3},m_{2p-2})$ iteratively as follows:
\begin{equation}
\label{FirstCouple}
m_1 = \floor{\lambda_{n_1,p}^{\Omega_{0}}(R)}; 
\quad
m_2 = m_1 + 1, 
\end{equation}
where $\lambda_{1,p}^{\Omega_{0}}(R)<\lambda_{2,p}^{\Omega_{1}}(R)<\ldots<\lambda_{p,p}^{\Omega_{0}}(R)$ are 
the zeros of the Poisson-Charlier orthogonal polynomial $C_{p}(.,R)$. 
The integers $m_{2k-1},m_{2k}$ are defined by
\begin{equation}
\label{AllCouple}
m_{2k-1} = \floor{\lambda_{n_{k},p-k+1}^{\Omega_{k-1}}(R)}; 
\quad
m_{2k} = m_{2k-1} + 1, 
\end{equation} 
where the annihilator polynomial  $\Omega_{k-1}$ is given by 
$\Omega_{k-1}(t) = \prod_{i=1}^{k-1} (m_{2i-1}-t)(m_{2i}-t)$

\begin{definition}
	The integer sequence $\mathcal{M}({\bf{n}},R) = \{m_1,m_2,\ldots,m_{2p-3},m_{2p-2}\}$ 
	obtained from (\ref{FirstCouple}) and (\ref{AllCouple}) is termed the integral spectrum 
	associated with the $p$-configuration ${\bf{n}}$ and the positive real number $R$. 
	Moreover, the unique zero $\rho({\bf{n}},R)$ of the degree $1$ polynomial orthogonal 
	with respect to the measure $\mu_{R}^{\Omega_{p-1}}$ is called the spectral radius 
	with respect to the couple $({\bf{n}},R)$. The real number $\rho({\bf{n}},R)$ is given 
	explicity by 
	\begin{equation}
	\label{SpectralDefinition}
	\rho({\bf{n}},R) = \frac{\int t P(t) d\mu_{R}(t)}{\int P(t) d\mu_{R}(t)} 
	\quad \textnormal{where} \quad 
	P(t) = \prod_{j \in \mathcal{M}({\bf{n}},R)} (t - j).
	\end{equation}   
\end{definition}
\begin{figure*}[h!]
	\hskip -0.7 cm
	\begin{overpic}[width=0.57\textwidth]{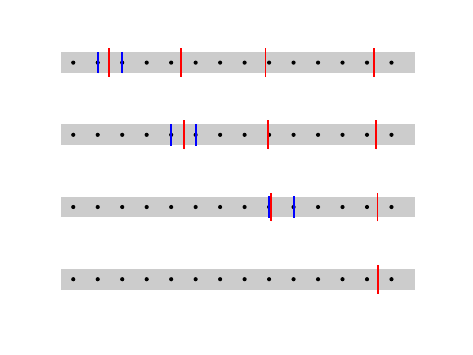}
		\put(10,69){\small{$(a)$}}
	\end{overpic}
	\hskip -1.2 cm
	\begin{overpic}[width=0.57\textwidth]{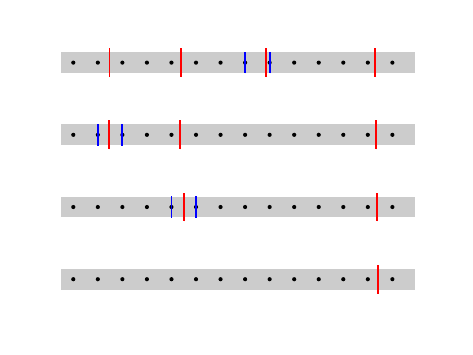}
		\put(10,69){\small{$(b)$}}
	\end{overpic}
	\vskip -1cm
	\caption{The integral spectrum associated with $R=5$ and the configurations  
		${\bf{n_{1}}} = \{1,1,1\}$ (a) and ${\bf{n_{2}}} = \{3,1,1\}$ (b) }
	\label{fig:IntegralSpectrum}	
\end{figure*}
\begin{example}
	Figure \ref{fig:IntegralSpectrum} shows the integral spectrum (bleu bars) associated with 
	$R=5$ and the configurations ${\bf{n_{1}}} = \{1,1,1\}$ (Figure \ref{fig:IntegralSpectrum}; a)
	and ${\bf{n_{2}}} = \{3,1,1\}$ (Figure \ref{fig:IntegralSpectrum}; b). The red bars show the zeros of the 
	orthogonal polynomials $\Pi_{4-k+1}^{R,\Omega_{k-1}}$ for $k=1,2,3,4$ respectively. In this example,
	we find  
	\begin{equation*}
	\begin{split}
	& \mathcal{M}({\bf{n_{1}}},R) = \{1,2,4,5,8,9\} 
	\quad \textnormal{and} \quad \rho({\bf{n_{1}}},R) \simeq 12.4539, \\
	& \mathcal{M}({\bf{n_{2}}},R) = \{7,8,1,2,4,5\} 
	\quad \textnormal{and} \quad \rho({\bf{n_{1}}},R) \simeq 12.4134.
	\end{split}
	\end{equation*} 
\end{example}

\begin{proposition}
	\label{SpectralMonotone}
	Given a $p$-configuration ${\bf{n}}$, the spectral radius $\rho({\bf{n}},R)$
	is a continuous and strictly increasing function of the parameter $R$.
\end{proposition}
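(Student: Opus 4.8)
The plan is to establish strict monotonicity and continuity simultaneously by controlling how the integral spectrum $\mathcal M(\mathbf n,R)$ depends on $R$. Fix the $p$-configuration $\mathbf n=\{n_1,\dots,n_{p-1}\}$, and call $R_0>0$ a \emph{breakpoint} if, in the construction \eqref{FirstCouple}--\eqref{AllCouple}, one of the intermediate zeros $\lambda^{\Omega_{k-1}}_{n_k,\,p-k+1}(R)$ takes an integer value at $R=R_0$. Holding $m_1,\dots,m_{2k-2}$ (hence $\Omega_{k-1}$) fixed, that zero is a strictly increasing continuous function of $R$ by Theorem~\ref{StrictlyIncreasing}, so it meets each integer value exactly once; consequently the breakpoints form a set that is finite in every bounded subinterval of $(0,\infty)$. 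On a maximal interval $I$ between two consecutive breakpoints the sequence $(m_1,\dots,m_{2p-2})$ is a fixed list of integers arranged in consecutive pairs, so $P:=\Omega_{p-1}=\prod_{k=1}^{p-1}(m_{2k-1}-t)(m_{2k-1}+1-t)$ is a \emph{fixed} admissible polynomial (each factor $(t-a)(t-a-1)$ is $\ge 0$ on $\mathbb Z$), and on $I$ we have the fixed formula
\[
\rho(\mathbf n,R)=\frac{\int tP(t)\,d\mu_R(t)}{\int P(t)\,d\mu_R(t)}.
\]
Since $\int t^{\ell}\,d\mu_R = B_\ell(R)$ (Proposition~\ref{proposition_monomial}), this is a ratio of polynomials in $R$ with strictly positive denominator, so $\rho(\mathbf n,\cdot)$ is real-analytic on $I$; moreover it is strictly increasing on $I$ by Theorem~\ref{StrictlyIncreasing} applied with $n=1$ and annihilator $\Omega_{p-1}$, whose unique zero is precisely $\rho(\mathbf n,R)$. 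Finally $\rho(\mathbf n,\cdot)$ is right-continuous everywhere, because each $\lambda^{\Omega_{k-1}}_{n_k,\cdot}$ is continuous and $\lfloor\cdot\rfloor$ is right-continuous, so $\mathcal M(\mathbf n,R)$ is constant just to the right of any $R_0$. The only thing left is left-continuity at a breakpoint.

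This is the heart of the proof, and the step I expect to be the main obstacle: showing $\lim_{R\to R_0^-}\rho(\mathbf n,R)=\rho(\mathbf n,R_0)$. Suppose the integrality occurs at stage $k$, i.e.\ $\lambda^{\Omega_{k-1}}_{n_k,\,p-k+1}(R_0)=\ell\in\mathbb Z$. For $R$ just below $R_0$ the $k$-th pair produced by the algorithm is $(\ell-1,\ell)$, whereas at $R=R_0$ (and just above) it is $(\ell,\ell+1)$, everything prior to stage $k$ being unchanged. So one must compare the two runs of the construction starting from $\mu^{\Omega_{k-1}}_{R_0}$ --- one annihilating $\{\ell-1,\ell\}$, the other $\{\ell,\ell+1\}$ --- at the common value $R=R_0$. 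The decisive point is that $\ell$ is a zero of $\Pi^{R_0,\Omega_{k-1}}_{p-k+1}$ while, by the ``at least one integer between consecutive zeros of a discrete orthogonal polynomial'' property \cite{karlin}, neither $\ell-1$ nor $\ell+1$ is. Evaluating the polar form of the Gauss identity $\int(x-t)^{2(p-k+1)-1}\,d\mu^{\Omega_{k-1}}_{R_0}=\sum_i\omega_i(x-\lambda_i)^{2(p-k+1)-1}$ against the factor $(t-(\ell-1))(t-\ell)$, respectively $(t-\ell)(t-(\ell+1))$ --- the very device used in Proposition~\ref{last-term} and throughout Section~7 --- one finds that both Christoffel transforms $(t-(\ell-1))(t-\ell)\,\mu^{\Omega_{k-1}}_{R_0}$ and $(t-\ell)(t-(\ell+1))\,\mu^{\Omega_{k-1}}_{R_0}$ have the \emph{same} set of $p-k$ Gaussian nodes, namely $\{\lambda_i : i\ne n_k\}$: the offending node $\ell$ is deleted and the others are untouched. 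Hence the integers selected at stages $k+1,\dots,p-1$ agree in the two runs, and the comparison is reduced to the two final annihilators; one more polar-form evaluation, using again that $\ell$ is an existing node so that the factor $(t-\ell)$ contributes nothing, shows that the two degree-one orthogonal polynomials of the resulting measures have the same zero, i.e.\ $\lim_{R\to R_0^-}\rho(\mathbf n,R)=\rho(\mathbf n,R_0)$. (If several intermediate zeros reach integers simultaneously at $R_0$, one iterates this argument stage by stage.)

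Putting the pieces together, $\rho(\mathbf n,\cdot)$ is continuous on $(0,\infty)$ and strictly increasing on each interval of a locally finite partition, hence strictly increasing on all of $(0,\infty)$, which proves the proposition. The genuinely delicate part, requiring careful bookkeeping, is the breakpoint analysis of the second paragraph --- in particular making the ``node-deletion'' precise via polar forms and verifying that the later-stage choices, and ultimately the value of $\rho$, are unaffected when the stage-$k$ straddling pair is shifted by one.
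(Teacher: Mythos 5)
Your overall plan mirrors the paper's: right-continuity comes from the fact that $\mathcal{M}(\mathbf{n},\cdot)$ is locally constant to the right of any $R$, strict monotonicity on intervals of constancy follows from Theorem~\ref{StrictlyIncreasing} applied with annihilator $\Omega_{p-1}$, and the genuinely delicate point is left-continuity across breakpoints. The paper bridges a breakpoint by replacing $m_{2k}$ with the half-integer $m_{2k-1}-\frac{1}{2}$ (eq.~(\ref{ImportantRho})); you instead compare the two integer pairs $(\ell-1,\ell)$ and $(\ell,\ell+1)$ directly. Both arguments ultimately rest on the same claim, and both, as far as I can see, have the same gap.

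The gap sits at the end of your second paragraph. The polar-form device correctly shows that the Christoffel transforms with annihilator factors $(t-\ell)(t-\ell+1)$ and $(t-\ell)(t-\ell-1)$ share the same degree-$(p-k)$ orthogonal-polynomial zeros $\{\lambda_i : i\ne n_k\}$, so the stage-$(k+1)$ integer choice agrees in the two runs. It does \emph{not} show that the later-stage choices, or the final spectral radius, agree. Writing $\eta\in\{\ell-1,\ell+1\}$ for the shifted factor and $S$ for the product of the later annihilator factors, the polar-form evaluation for $\rho$ produces
\[
\sum_{i\ne n_k}\omega_i(\rho-\lambda_i)(\ell-\lambda_i)(\eta-\lambda_i)S(\lambda_i)=0,
\]
a sum over the $p-k$ surviving nodes with $\eta$-dependent coefficients; the resulting $\rho$ depends on $\eta$ except when $p-k=1$, i.e.\ when only one node survives. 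Likewise the degree-$(p-k-1)$ and lower orthogonal polynomials of the two transforms are genuinely different (the measures have different moments), so the stage-$(k+2),\dots,(p-1)$ selections need not agree. (The paper's corresponding assertion in eq.~(\ref{OrthogonalEquality}), that $\Pi^{\tilde\Omega_\eta,R}_j = \Pi^{\Omega_k,R}_j$ for \emph{all} $j$, has exactly this problem: two distinct Christoffel transforms cannot share their full sequence of orthogonal polynomials.)

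To make this concrete, take $p=3$, $\mathbf{n}=(1,1)$, $R_0=3$. Then $C_3(t,3)=(t-1)(t^2-11t+27)$, so $\lambda_1=1$ is an integer while $\lambda_2=(11-\sqrt{13})/2\approx3.70$ and $\lambda_3=(11+\sqrt{13})/2\approx7.30$; both transforms at stage $2$ give $m_3=3$, $m_4=4$. Using $\int t^\ell\,d\mu_3=B_\ell(3)=1,3,12,57,309,1866$ for $\ell=0,\dots,5$, one computes
\[
\lim_{R\to 3^-}\rho(\mathbf{n},R)=\frac{\int t\cdot t(t-1)(t-3)(t-4)\,d\mu_3}{\int t(t-1)(t-3)(t-4)\,d\mu_3}=\frac{333}{45}=\frac{37}{5},
\]
whereas
\[
\rho(\mathbf{n},3)=\frac{\int t(t-1)(t-2)(t-3)(t-4)\,d\mu_3}{\int (t-1)(t-2)(t-3)(t-4)\,d\mu_3}=\frac{243}{33}=\frac{81}{11}.
\]
Since $\frac{37}{5}>\frac{81}{11}$, the function $\rho(\mathbf{n},\cdot)$ appears to jump \emph{downward} at $R_0=3$, contrary to both left-continuity and monotonicity. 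The same moments give $\int t(t-1)(t-\frac{1}{2})(t-3)(t-4)\,d\mu_3\big/\int(t-1)(t-\frac{1}{2})(t-3)(t-4)\,d\mu_3=\frac{207}{28}$, which equals neither value, so the paper's intermediate identity (\ref{ImportantRho}) also fails on this example. Unless I am misreading the construction of the integral spectrum, the step you yourself flag as the "genuinely delicate part" does not close, and the obstruction seems to lie in the underlying claim rather than in your bookkeeping; a correct proof would need either a sharper reduction across the pair-shift than the single node-deletion gives, or a restriction of the class of configurations and parameter values for which the proposition is asserted.
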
   
\begin{proof}
	We first prove that the function $\rho({\bf{n}},R)$ is a right-continuous
	function of the parameter $R$. Let $(m_1,m_2,\ldots,m_{2p-3},m_{2p-2}) = 
	\mathcal{M}({\bf{n}},R)$. By definition, the following inequalities hold 
	\begin{equation}
	\label{LocalInequalities}
	m_{2k-1} \leq \lambda_{n_{k},p-k+1}^{\Omega_{k-1}}(R) < m_{2k}, \quad k=1,2,\ldots,p-1.
	\end{equation} 
	Let $\epsilon >0$ be a small enough positive real number. Thus an iterative use of 
	Theorem \ref{StrictlyIncreasing} shows that 
	\begin{equation*}
	m_{2k-1} \leq \lambda_{n_{k},p-k+1}^{\Omega_{k-1}}(R+\epsilon) < m_{2k}, \quad k=1,2,\ldots,p-1.
	\end{equation*} 
	Therefore, $\mathcal{M}({\bf{n}},R) = \mathcal{M}({\bf{n}},R+\epsilon)$ and the right-continuity
	of $\rho({\bf{n}},R)$ follows from the fact that for any $r$ such that $R \leq r \leq R+\epsilon$, 
	$\rho({\bf{n}},r)$ is given by the continuous function in $r$ 
	\begin{equation*}
	\rho({\bf{n}},r) = \frac{\int t P(t) d\mu_{r}(t)}{\int P(t) d\mu_{r}(t)} 
	\quad \textnormal{where} \quad 
	P(t) = \prod_{j \in \mathcal{M}({\bf{n}},R)} (t - j).
	\end{equation*}   
	Proving the left-continuity of the function $\rho({\bf{n}},R)$ is technically more difficult.
	The main reason is that some of the left inequalities (\ref{LocalInequalities}) can become equalities. 
	Suppose there exists a certain $k \leq p-1$ such that $m_{2k-1} = \lambda_{n_{k},p-k+1}^{\Omega_{k-1}}(R)$.
	Then, for any positive real number $\epsilon$, no matter how small it is, 
	we have $\mathcal{M}({\bf{n}},R) \not= \mathcal{M}({\bf{n}},R-\epsilon)$ and
	thus we cannot use the same arguments as in the proof of the right-continuity. Therefore, 
	if all the inequalities in (\ref{LocalInequalities}) as strict one for a given $R$ 
	then we can use exactly the same arguments as in the proof of the right-continuity to show the 
	left-continuity of the function $\rho({\bf{n}},R)$ at $R$. Let us now take an $R$ for which  
	there exist integers $k\leq p-1$ with $m_{2k-1} = \lambda_{n_{k},p-k+1}^{\Omega_{k-1}}(R)$ where 
	$(m_1,m_2,\ldots,m_{2p-3},m_{2p-2}) = \mathcal{M}({\bf{n}},R)$. Denote by $\mathcal{I}$ the set of 
	indices $k$ such that  $m_{2k-1} = \lambda_{n_{k},p-k+1}^{\Omega_{k-1}}(R)$.
	Define the real numbers $\tilde{\mathcal{M}}=(\tilde{m}_1,\tilde{m}_2,\ldots,\tilde{m}_{2p-2})$ by 
	\begin{equation*}
	\begin{split}
	\tilde{m}_{2i-1} = m_{2i-1} \quad &\textnormal{for} \quad i=1,2,\ldots,p-1, \\ 
	\tilde{m}_{2i} = m_{2i} \quad &\textnormal{if} \quad 2i-1 \notin \mathcal{I}, \\
	\tilde{m}_{2i} = m_{2i-1} - \frac{1}{2} \quad &\textnormal{if} \quad 2i-1 \in \mathcal{I}.
	\end{split}   
	\end{equation*} 
	Obviously, we have 
	\begin{equation}
	\label{NewInequalities}
	\tilde{m}_{2k-1} <\lambda_{n_{k},p-k+1}^{\Omega_{k-1}}(R) \leq \tilde{m}_{2k} 
	\quad \textnormal{for} \quad k=1,2,\ldots,p-1.
	\end{equation}
	We shall show that 
	\begin{equation}
	\label{ImportantRho}
	\rho({\bf{n}},R) = \frac{\int t Q(t) d\mu_{R}(t)}{\int Q(t) d\mu_{R}(t)} 
	\quad \textnormal{with} \quad 
	Q(t) = \prod_{k=1}^{2p-2} (t - \tilde{m}_k).
	\end{equation}
	Once (\ref{ImportantRho}) shown, the proof of the left-continuity follows the same arguments as the one 
	we used for the right-continuity. Namely, we take a small enough positive real number $\epsilon$ and by
	an iterative application of Theorem \ref{StrictlyIncreasing} while taking into account 
	inequalities (\ref{NewInequalities}), leads to 
	\begin{equation*}
	\tilde{m}_{2k-1} <\lambda_{n_{k},p-k+1}^{\Omega_{k-1}}(R-\epsilon) \leq \tilde{m}_{2k}, 
	\quad k=1,2,\ldots,p-1.
	\end{equation*}
	Therefore, the left-continuity becomes a consequence of the fact that for any real number $r$ such that 
	$R-\epsilon \leq r \leq R$,  $\rho({\bf{n}},r)$ is given by the continuous function in $r$ 
	\begin{equation*}
	\rho({\bf{n}},r) = \frac{\int t Q(t) d\mu_{r}(t)}{\int Q(t) d\mu_{r}(t)} 
	\quad \textnormal{with} \quad 
	Q(t) = \prod_{k=1}^{2p-2} (t - \tilde{m}_k).
	\end{equation*}    
	To show (\ref{ImportantRho}) we proceed as follows. Let $k$ be the smallest index in $\mathcal{I}$. 
	By Gauss quadrature with respect to the measure $\mu^{\Omega_{k-1}}_{R}$ , we have
	\begin{equation}
	\label{GaussLocal}
	\int (x-t)^{2(p-k)+1} d\mu^{\Omega_{k-1}}_{R} = \sum_{j=1}^{p-k+1} \alpha_j 
	(x-\lambda^{\Omega_{k-1}}_{j,p-k+1})^{2(p-k)+1},
	\end{equation}
	with $\lambda_{n_{k},p-k+1}^{\Omega_{k-1}} = m_{2k-1}$ and $\alpha_j > 0$ for $j=1,2,\ldots,p-k+1$.   
	Let $\eta$ be a real number such that $m_{2k-1}-1 \leq \eta \leq m_{2k-1} +1$.  
	Evaluating the polar form of both sides of (\ref{GaussLocal}) at $(x^{[2(p-k)-1]},m_{2k-1},\eta)$, 
	we obtain
	\begin{equation}
	\label{NewGauss}
	\int (x-t)^{2(p-k)-1} (m_{2k-1}-t)(\eta-t)d\mu^{\Omega_{k-1}}_{R} = \sum_{j=1, j\not={n}_k}^{p-k+1} 
	\tilde{\alpha}_j (x-\lambda^{\Omega_{k-1}}_{j,p-k+1})^{2(p-k)-1}, 
	\end{equation}  
	with $\tilde{\alpha}_j = \alpha_{j} (m_{2k-1} -\lambda^{\Omega_{k-1}}_{j,p-k+1}) (\eta -\lambda^{\Omega_{k-1}}_{j,p-k+1}) > 0$ 
	for $j=1,\ldots,p-k+1; j \not= n_{k}$. This positivity is a direct consequence of the fact that the orthogonal polynomial 
	$\Pi^{\Omega_{k-1}}_{p-k-1}$ has no zero other than $m_{2k-1} = \lambda^{\Omega_{k-1}}_{n_k,p-k+1}$ in the interval 
	$[m_{2k-1}-1,m_{2k-1}+1]$. Identity (\ref{NewGauss}) is thus the Gauss quadrature with respect to the 
	Christoffel transform measure with annihilator polynomial $\tilde{\Omega}_{\eta}(t) = (m_{2k-1}-t)(\eta-t) \Omega_{k-1}(t)$. 
	This shows in particular that $\lambda^{\Omega_{k-1}}_{j,p-k+1}, j=1,2,\ldots,p-k+1; j \not= n_k,$ 
	are the zeros of the orthogonal polynomial $\Pi^{\tilde{\Omega}_{\eta},R}_{p-k}$ and that these zeros 
	does not depend on the real number $\eta$. From the definition of the integral spectrum  
	$\mathcal{M}({\bf{n}},R)$, if we take $\eta = m_{2k}$, then up to an adequate normalization, we have 
	\begin{equation}
	\label{OrthogonalEquality}
	\Pi^{\tilde{\Omega}_{\eta},R}_{j} = \Pi^{\Omega_{k},R}_{j}, \quad j=1,2,\ldots
	\end{equation}
	If instead of $\eta =  m_{2k}$, we take  $\eta =  m_{2k-1} - 1/2$, then we still have (\ref{OrthogonalEquality}) 
	up to an adequate normalization. Accordingly, changing the value of $m_{2k}$ to $\tilde{m}_{2k} = m_{2k-1} - 1/2$ 
	in the integral spectrum $\mathcal{M}({\bf{n}},R)$ does not change the value of the spectral radius $\rho({\bf{n}},R)$  
	in the sense that we have 
	\begin{equation*}
	\rho({\bf{n}},R) =  
	\frac{\int t Q_1(t) d\mu_{R}(t)}{\int Q_1(t) d\mu_{R}(t)} = 
	\frac{\int t Q_2(t) d\mu_{R}(t)}{\int Q_2(t) d\mu_{R}(t)},  
	\end{equation*}
	where $Q_1(t) = \prod_{j=1}^{2p-2}(t - m_{j})$ and 
	$Q_2(t) = (t-\tilde{m}_{2k}) \prod_{j=1, j\not=2k}^{2p-2}(t - m_{k})$. 
	Applying iteratively the same arguments to each index in $\mathcal{I}$ (from the smallest index to the largest one),
	proves (\ref{ImportantRho}). To prove that the function $\rho({\bf{n}},R)$ is a strictly increasing function 
	of the parameter $R$ it is sufficient to prove that for any $\epsilon >0$ small 
	enough, we have $\rho({\bf{n}},R+\epsilon) > \rho({\bf{n}},R)$. As already 
	shown, we can always choose an $\epsilon >0$ small enough such that  
	\begin{equation*}
	\mathcal{M}({\bf{n}},R+\epsilon) =  \mathcal{M}({\bf{n}},R).
	\end{equation*}
	According to Theorem \ref{StrictlyIncreasing},  $\rho({\bf{n}},R+\epsilon) > \rho({\bf{n}},R)$ since 
	for any $R\leq r\leq R+\epsilon$, $\rho({\bf{n}},r)$ is the unique zero of the orthogonal polynomial 
	$\Pi_{1}^{\Omega_{p-1},R}$ where $\Omega_{p-1}(t) = \prod_{j=1}^{2p-2} (t - m_{j})$. 
\qed
\end{proof}

Given a positive integer $m$ and a $p$-configuration ${\bf{n}}$, with $m \geq 2p-1$, then according to Proposition 
\ref{SpectralMonotone}, there exists a unique real number $\mathcal{R}_{m,p}({\bf{n}})$ 
such that
\begin{equation*}
\rho\left( {\bf{n}},\mathcal{R}_{m,p}({\bf{n}})\right) = m.
\end{equation*}
The real number $\mathcal{R}_{m,p}({\bf{n}})$ will be called the {\it{optimal spectral radius}}
with respect to the $p$-configuration ${\bf{n}}$ and the integer $m$. The associated integral spectrum 
\begin{equation*}
\mathcal{M}({\bf{n}},\mathcal{R}_{m,p}({\bf{n}})) = \{m_1,m_2,\ldots,m_{2p-3},m_{2p-2}\}
\end{equation*}
will be termed the {\it {optimal integral spectrum}} and denoted by $\mathcal{M}_{m,p}({\bf{n}})$.

We shall need the following simple yet, important result. 

\begin{proposition}
	Let $m$ and $p$ be positive integers ($m \geq 2p-1$) and ${\bf{n}}$ be a $p$-configuration.
	If $R_1$ and $R_2$ are two real numbers such that 
	\begin{equation}
	\label{DichotomyInequalities}
	\mathcal{M}({\bf{n}},R_1) = \mathcal{M}({\bf{n}},R_2) 
	\quad and \quad 
	\rho({\bf{n}},R_1) \leq m \leq  \rho({\bf{n}},R_2),
	\end{equation} 
	then the optimal integral spectrum with respect to ${\bf{n}}$ and to the integer $m$ is given by 
	$\mathcal{M}_{m,p}({\bf{n}}) = \mathcal{M}({\bf{n}},R_1)=\mathcal{M}({\bf{n}},R_1)$ 
	and the optimal spectral radius $\mathcal{R}_{m,p}({\bf{n}})$ is the unique positive real 
	zero in $[R_1,R_2]$ of the polynomial equation in $R$
	\begin{equation}
	\label{SpectralRadius}
	h_{2p-1}(m_1,m_2,\ldots,m_{2p-2},m; R) = 0, 
	\end{equation}
	where $(m_1,m_2,\ldots,m_{2p-2}) = \mathcal{M}_{m,p}({\bf{n}})$ and $h_{2p-1}(-,R)$ the polar form of 
	the polynomial $\mathcal{H}_{2p-1}(.;R)$. 
\end{proposition}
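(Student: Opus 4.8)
The plan is to reduce the statement to facts already established about the integral spectrum and the spectral radius, together with the monotonicity of $\rho(\mathbf{n},\cdot)$ from Proposition~\ref{SpectralMonotone}. First I would observe that the hypothesis $\mathcal{M}(\mathbf{n},R_1)=\mathcal{M}(\mathbf{n},R_2)$ forces $\mathcal{M}(\mathbf{n},R)$ to be \emph{constant} on the whole interval $[R_1,R_2]$: indeed, the integral spectrum is built couple by couple via (\ref{FirstCouple}) and (\ref{AllCouple}) from the integers $\floor{\lambda_{n_k,p-k+1}^{\Omega_{k-1}}(R)}$, and by Theorem~\ref{StrictlyIncreasing} each of these zeros is a strictly increasing (hence monotone) function of $R$; a monotone integer-valued quantity that agrees at the endpoints of an interval is constant on it. Thus on $[R_1,R_2]$ the annihilator polynomials $\Omega_{0},\ldots,\Omega_{p-1}$ do not change, and writing $(m_1,\ldots,m_{2p-2})=\mathcal{M}(\mathbf{n},R_1)$ we have, for every $r\in[R_1,R_2]$,
\begin{equation*}
\rho(\mathbf{n},r) = \frac{\int t\, P(t)\, d\mu_r(t)}{\int P(t)\, d\mu_r(t)},
\qquad P(t)=\prod_{j=1}^{2p-2}(t-m_j),
\end{equation*}
which is a genuine continuous function of $r$ on that interval.

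Next, by Proposition~\ref{SpectralMonotone} the function $r\mapsto\rho(\mathbf{n},r)$ is continuous and strictly increasing globally, and in particular on $[R_1,R_2]$. Since $\rho(\mathbf{n},R_1)\leq m\leq\rho(\mathbf{n},R_2)$, the intermediate value theorem produces a unique $r^\star\in[R_1,R_2]$ with $\rho(\mathbf{n},r^\star)=m$; by the definition of the optimal spectral radius, $r^\star=\mathcal{R}_{m,p}(\mathbf{n})$, and since $\mathcal{M}(\mathbf{n},\cdot)$ is constant on $[R_1,R_2]\ni r^\star$, the associated optimal integral spectrum satisfies $\mathcal{M}_{m,p}(\mathbf{n})=\mathcal{M}(\mathbf{n},R_1)=\mathcal{M}(\mathbf{n},R_2)$, as claimed.

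It remains to identify $\mathcal{R}_{m,p}(\mathbf{n})$ as the root of (\ref{SpectralRadius}). The condition $\rho(\mathbf{n},r^\star)=m$ reads $\int t\,P(t)\,d\mu_{r^\star}(t)=m\int P(t)\,d\mu_{r^\star}(t)$, i.e. $\int (t-m)\prod_{j=1}^{2p-2}(t-m_j)\,d\mu_{r^\star}(t)=0$. The integrand $(t-m)\prod_{j=1}^{2p-2}(t-m_j)$ is a polynomial of degree $2p-1$ whose roots are $m_1,\ldots,m_{2p-2},m$, so by the polar-form identity (\ref{PolarProduct}) together with the remark that $\int Q(t)\,d\mu_R(t)$ equals the polar form of $\mathcal{H}_{2p-1}(\cdot;R)$ evaluated at the roots of any degree-$(2p-1)$ polynomial $Q$ (this is exactly the passage from (\ref{Tempoh}) to (\ref{zerocondition})), this integral equals
\begin{equation*}
c\cdot h_{2p-1}(m_1,m_2,\ldots,m_{2p-2},m;r^\star)
\end{equation*}
for a nonzero constant $c$ coming from the leading coefficient; hence $h_{2p-1}(m_1,\ldots,m_{2p-2},m;r^\star)=0$. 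Finally I would note that, since the moments of $\mu_R$ are the Touchard polynomials $B_\ell(R)$ with integer coefficients (Proposition~\ref{proposition_monomial}) and the $m_i$ are integers, $R\mapsto h_{2p-1}(m_1,\ldots,m_{2p-2},m;R)$ is a polynomial in $R$ of degree $2p-1$ with integer coefficients, and strict monotonicity of $\rho(\mathbf{n},\cdot)$ on $[R_1,R_2]$ guarantees $r^\star$ is its \emph{unique} zero there. The only mildly delicate point — the main obstacle — is the constancy of $\mathcal{M}(\mathbf{n},\cdot)$ on $[R_1,R_2]$: one must be careful that the \emph{floor} of each $\lambda_{n_k,p-k+1}^{\Omega_{k-1}}(R)$ stays put, which requires invoking Theorem~\ref{StrictlyIncreasing} inductively on $k$ (so that the annihilator $\Omega_{k-1}$ itself is frozen before one discusses $\lambda_{n_k,p-k+1}^{\Omega_{k-1}}$), exactly as in the right-continuity argument inside the proof of Proposition~\ref{SpectralMonotone}.
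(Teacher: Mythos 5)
Your proof is correct and follows essentially the same route as the paper: you freeze the integral spectrum on $[R_1,R_2]$ by an inductive appeal to the monotonicity of zeros of the Christoffel-transformed orthogonal polynomials (Theorem \ref{StrictlyIncreasing}), then invoke the continuity and strict monotonicity of $\rho(\mathbf{n},\cdot)$ from Proposition \ref{SpectralMonotone} together with the intermediate value theorem, and finally translate $\rho(\mathbf{n},r^\star)=m$ into the vanishing of the polar form $h_{2p-1}$ at the frozen nodes. The paper's own proof is terser at the last step (it simply cites (\ref{zerocondition})), while you spell out the polar-form computation $\int(t-m)\prod_j(t-m_j)\,d\mu_{r^\star}=0 \Leftrightarrow h_{2p-1}(m_1,\ldots,m_{2p-2},m;r^\star)=0$; the underlying argument is the same.
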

\begin{proof}
	As shown in Proposition \ref{SpectralMonotone}, the function  
	$\rho({\bf{n}},R)$ is continuous and strictly increasing with respect to $R$. Thus, from the 
	inequalities (\ref{DichotomyInequalities}), we conclude that there exists a real 
	number $R$ such that  $R_1 \leq R \leq R_2$ and $\rho({\bf{n}},R) = m$. 
	Moreover, we have
	\begin{equation*}
	m_{2k-1} \leq \lambda_{n_{k},p-k+1}^{\Omega_{k-1}}(R_j) < m_{2k}, 
	\quad k=1,2,\ldots,p-1 \quad \textnormal{and} \quad  j=1,2.
	\end{equation*}  
	Thus an iterative use of Theorem \ref{StrictlyIncreasing} shows that for 
	any real number $R$ such that $R_1 \leq R \leq R_2$ we have  
	$\mathcal{M}({\bf{n}},R) = \mathcal{M}({\bf{n}},R_1)$. That the optimal spectral radius 
	$\mathcal{R}_{m,p}({\bf{n}})$ is the unique zero in  $[R_1,R_2]$ of the polynomial equation 
	(\ref{SpectralRadius}) is a direct consequence of (\ref{zerocondition}). 
	This concludes the proof.
\qed
\end{proof}

With the definitions and results shown above, Theorem \ref{MultipleIteration} 
can be restated as follows.

\begin{theorem}
	\label{Configuration1}
	For positive integers $m$ and $p$ such that  $m \geq 2p-1$, 
	there exists a least one $p$-configuration ${\bf{n}}$ such that
	\begin{equation*}
	R_{m,2p-1} = \mathcal{R}_{m,p}({\bf{n}}),
	\end{equation*}  
	and such that the optimal threshold polynomial is given by 
	\begin{equation*}
	\Phi_{m,2p-1}(x) = \sum_{k=1}^{2p-1} \alpha_k \left(1 + \frac{x}{R_{m,2p-1}}\right)^{m_k},
	\end{equation*}
	where $(m_1,m_2,\ldots,m_{2p-3},m_{2p-2}) = \mathcal{M}_{m,p}({\bf{n}})$ 
	and $m_{2p-1} = m$. 
\end{theorem}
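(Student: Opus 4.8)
The plan is to read Theorem~\ref{Configuration1} as a mere change of vocabulary in Theorem~\ref{MultipleIteration}: the iterative Christoffel process that describes the exponents of $\Phi_{m,2p-1}$ there is \emph{literally} the recursion (\ref{FirstCouple})--(\ref{AllCouple}) that builds an integral spectrum, once the $p-1$ consecutive pairs of exponents are relabelled in the order in which that process consumes them. Accordingly I would begin from the optimal threshold polynomial in the form granted by Theorem~\ref{fundamental-corollary},
\begin{equation*}
\Phi_{m,2p-1}(x) = \sum_{k=1}^{2p-1} \alpha_k \left(1 + \frac{x}{R_{m,2p-1}}\right)^{m_k},
\end{equation*}
with the exponents obeying (\ref{introm}), i.e. $m_{2k}=m_{2k-1}+1$ for $k=1,\dots,p-1$ and $m_{2p-1}=m$, and then apply Theorem~\ref{MultipleIteration} to reorder the $p-1$ pairs so that, writing $\Omega_0\equiv 1$ and $\Omega_{\ell-1}(t)=\prod_{i=1}^{\ell-1}(m_{2i-1}-t)(m_{2i}-t)$, there are integers $k_\ell$ with $1\le k_\ell\le p-\ell+1$ satisfying $m_{2\ell-1}=\floor{\lambda_{k_\ell,p-\ell+1}^{\Omega_{\ell-1}}(R_{m,2p-1})}$ and $m_{2\ell}=m_{2\ell-1}+1$ for $\ell=1,\dots,p-1$.

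Next I would set ${\bf{n}}:=\{n_1,\dots,n_{p-1}\}$ with $n_\ell:=k_\ell$; the inequality $1\le k_\ell\le p-\ell+1$ is exactly the requirement for ${\bf{n}}$ to be a $p$-configuration, which settles the existence claim. Comparing the displayed recursion with (\ref{FirstCouple})--(\ref{AllCouple}) and noting that at each step the annihilator polynomial assembled from the already-chosen pairs is the same object in both constructions, an induction on $\ell$ gives $\mathcal{M}({\bf{n}},R_{m,2p-1})=\{m_1,\dots,m_{2p-2}\}$. The degenerate situation in which some $\lambda^{\Omega_{\ell-1}}_{k_\ell,p-\ell+1}(R_{m,2p-1})$ happens to be an integer is harmless: its floor is that integer, the companion $m_{2\ell}$ enters as a virtual (missing) exponent, and this is precisely the terminal case treated at the end of the proof of Proposition~\ref{SingleIteration}.

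It then remains to identify the two parameters attached to ${\bf{n}}$. Put $P(t)=\prod_{i=1}^{2p-2}(t-m_i)=\prod_{k=1}^{p-1}(t-m_{2k-1})(t-m_{2k-1}-1)$; since each factor $(t-q)(t-q-1)$ is non-negative at every integer, $P$ is admissible, hence $\int P\,d\mu_{R_{m,2p-1}}>0$ and $\rho({\bf{n}},R_{m,2p-1})$ is well defined. The characterization (\ref{zerocondition}) of the optimal threshold polynomial reads $h_{2p-1}(m_1,\dots,m_{2p-2},m;R_{m,2p-1})=0$, and because the polar form satisfies $h_{2p-1}(u_1,\dots,u_{2p-1};R)=\int\prod_{i}(u_i-t)\,d\mu_R(t)$ this is the same as $\int P(t)(m-t)\,d\mu_{R_{m,2p-1}}(t)=0$, i.e. $\rho({\bf{n}},R_{m,2p-1})=m$. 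By Proposition~\ref{SpectralMonotone} the map $R\mapsto\rho({\bf{n}},R)$ is continuous and strictly increasing, hence injective, so $\mathcal{R}_{m,p}({\bf{n}})=R_{m,2p-1}$ and therefore $\mathcal{M}_{m,p}({\bf{n}})=\mathcal{M}({\bf{n}},R_{m,2p-1})=\{m_1,\dots,m_{2p-2}\}$; the asserted expression for $\Phi_{m,2p-1}$ is then the very expression we started from, now read with $(m_1,\dots,m_{2p-2})=\mathcal{M}_{m,p}({\bf{n}})$ and $m_{2p-1}=m$.

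I expect the one genuinely delicate point to be the bookkeeping in the induction of the second paragraph --- keeping the annihilator polynomials synchronized through the relabelling of pairs, and dealing cleanly with the coincidences $m_{2\ell-1}=\lambda^{\Omega_{\ell-1}}_{k_\ell,p-\ell+1}(R_{m,2p-1})$; everything else is a direct unwinding of the definitions of integral spectrum, spectral radius and optimal spectral radius together with the facts already established in Theorems~\ref{fundamental-corollary}, \ref{MultipleIteration} and \ref{StrictlyIncreasing}.
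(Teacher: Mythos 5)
Your proposal is correct and follows exactly the route the paper intends: the paper offers no explicit proof, merely asserting that Theorem~\ref{Configuration1} ``restates'' Theorem~\ref{MultipleIteration} in the vocabulary of integral spectra and spectral radii, and your argument carries out that restatement carefully. In particular, the identifications $n_\ell=k_\ell$, the induction matching the two Christoffel recursions, and the use of $h_{2p-1}(m_1,\dots,m_{2p-2},m;R_{m,2p-1})=0$ together with Proposition~\ref{SpectralMonotone} to pin down $\mathcal{R}_{m,p}({\bf{n}})=R_{m,2p-1}$ are precisely the implicit content of the paper's one-line claim.
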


The following theorem is an essential part in the algorithmic aspect for computing 
the optimal threshold factors. It roughly states that we do not need to check all 
the $p$-configurations to compute $R_{m,2p-1}$ and its associated optimal 
threshold polynomial. 
\begin{theorem}
	\label{SingleConfiguration}
	Let $m$ and $p$ be positive integers such that $m \geq 2p-1$. 
	Let ${\bf{n}}$ be a $p$-configuration such that the system 
	\begin{equation}
	\label{KindLinearSystem}
	\int (x-t)^{2p-1} d\mu_{\mathcal{R}_{m,p}({\bf{n}})} = 
	\sum_{i=1}^{2p-1} \alpha_{i} (x-m_{i})^{2p-1},
	\end{equation}
	where $(m_1,\ldots,m_{2p-3},m_{2p-2}) = \mathcal{M}_{m,p}({\bf{n}})$
	and $m_{2p-1} = m$, admits a non-negative solution in $(\alpha_1,\alpha_2,\ldots,\alpha_{2p-1})$. Then 
	\begin{equation*}
	R_{m,2p-1} = \mathcal{R}_{m,p}({\bf{n}}) 
	\quad \textnormal{and} \quad 
	\Phi_{m,2p-1}(x) = \sum_{k=1}^{2p-1} \alpha_k \left(1 + \frac{x}{R_{m,2p-1}}\right)^{m_k}.
	\end{equation*}
\end{theorem}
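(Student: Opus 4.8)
The plan is to show that the $p$-configuration ${\bf n}$ described in the hypothesis actually realizes the optimal threshold factor, by combining the upper bound $R_{m,2p-1}\le \mathcal{R}_{m,p}({\bf n})$ (which will follow from the very definition of $R_{m,2p-1}$ together with Corollary~\ref{HR}) with the reverse inequality $R_{m,2p-1}\ge \mathcal{R}_{m,p}({\bf n})$ (which follows from the assumed non-negative solution to (\ref{KindLinearSystem})). First I would handle the easy direction: by hypothesis the linear system (\ref{KindLinearSystem}) has a non-negative solution $(\alpha_1,\ldots,\alpha_{2p-1})$ with integer nodes $0\le m_1<\cdots<m_{2p-1}=m$ bounded above by $m$, so by the equivalence Theorem~\ref{EquivalenceTheorem} (or directly Corollary~\ref{HR}) the polynomial $\mathcal{H}_{2p-1}(\cdot;\mathcal{R}_{m,p}({\bf n}))$ admits a representation of the required form; hence $\mathcal{R}_{m,p}({\bf n})\le R_{m,2p-1}$.

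Second I would establish $R_{m,2p-1}\ge \mathcal{R}_{m,p}({\bf n})$ is in fact an equality, i.e.\ $R_{m,2p-1}\le\mathcal{R}_{m,p}({\bf n})$. Suppose for contradiction $R_{m,2p-1}>\mathcal{R}_{m,p}({\bf n})$. By Theorem~\ref{fundamental-corollary}, the optimal threshold polynomial $\Phi_{m,2p-1}$ has exponents of the form (\ref{introm}) with largest exponent $m$, and by Theorem~\ref{Configuration1} there is \emph{some} $p$-configuration ${\bf n}'$ with $R_{m,2p-1}=\mathcal{R}_{m,p}({\bf n}')$ and associated integral spectrum realizing $\Phi_{m,2p-1}$. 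The key point is then to compare the two spectral radii: using Proposition~\ref{SpectralMonotone} we know each $\rho({\bf n},R)$ is continuous and strictly increasing in $R$, and $\mathcal{R}_{m,p}({\bf n})$, $\mathcal{R}_{m,p}({\bf n}')$ are the unique values where these reach $m$. Since (\ref{KindLinearSystem}) at $R=\mathcal{R}_{m,p}({\bf n})$ already furnishes a valid (positive-weight, integer-node) quadrature exact on degree $\le 2p-1$ with nodes $\le m$, the maximality of $R_{m,2p-1}$ would give $R_{m,2p-1}\ge \mathcal{R}_{m,p}({\bf n})$; the \emph{strict} inequality $R_{m,2p-1}>\mathcal{R}_{m,p}({\bf n})$ would then mean the node set $\mathcal{M}_{m,p}({\bf n})\cup\{m\}$ does not support such a quadrature at the larger value $R_{m,2p-1}$ — but at $R_{m,2p-1}$ the optimal polynomial $\Phi_{m,2p-1}$ is \emph{unique}, so its exponents must coincide with the integral spectrum of ${\bf n}'$, and I would argue these are forced to agree with $\mathcal{M}_{m,p}({\bf n})\cup\{m\}$ by running the descending/ascending rigidity from Theorem~\ref{fundamental-theorem3} backwards, contradicting $\mathcal{R}_{m,p}({\bf n})<R_{m,2p-1}=\mathcal{R}_{m,p}({\bf n}')$ in view of the strict monotonicity of Theorem~\ref{StrictlyIncreasing} applied to each adaptive Christoffel transform.

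A cleaner route for that second step, which I would actually prefer, is the following: the candidate polynomial
\begin{equation*}
\Psi(x) = \sum_{k=1}^{2p-1}\alpha_k\left(1+\frac{x}{\mathcal{R}_{m,p}({\bf n})}\right)^{m_k}
\end{equation*}
built from the assumed non-negative solution to (\ref{KindLinearSystem}) lies in $\Pi_{m,2p-1}$, is absolutely monotonic on $[-\mathcal{R}_{m,p}({\bf n}),0]$ (this is exactly the content of the Proposition following (\ref{hsylvester})), hence $R(\Psi)\ge \mathcal{R}_{m,p}({\bf n})$, giving $R_{m,2p-1}\ge \mathcal{R}_{m,p}({\bf n})$ again. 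For the upper bound, I would invoke Theorem~\ref{Configuration1}: $R_{m,2p-1}=\mathcal{R}_{m,p}({\bf n}')$ for some ${\bf n}'$, and both optimal spectral radii are characterized as the unique zero of the \emph{same} type of polar-form equation (\ref{SpectralRadius}), $h_{2p-1}(m_1,\ldots,m_{2p-2},m;R)=0$, on the relevant interval. Thus I must show that among all $p$-configurations, the one supplied in the hypothesis is the maximizer — equivalently, that any $p$-configuration whose optimal integral spectrum supports a positive integer quadrature at $R=\mathcal{R}_{m,p}({\bf n})$ already attains the supremum defining $R_{m,2p-1}$. This is immediate once we know $\mathcal{R}_{m,p}({\bf n})\le R_{m,2p-1}$ and $R_{m,2p-1}=\mathcal{R}_{m,p}({\bf n}')\le$ any value admitting a valid quadrature; since $\mathcal{R}_{m,p}({\bf n})$ is itself such a value, $R_{m,2p-1}\le$ it is false only if some strictly larger $R$ also admits a valid quadrature, contradicting the first direction applied to ${\bf n}'$.

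The main obstacle I anticipate is the precise bookkeeping in the argument that the upper bound $R_{m,2p-1}\le\mathcal{R}_{m,p}({\bf n})$ is forced — one must rule out that a \emph{different} configuration ${\bf n}'$ yields a strictly larger optimal spectral radius while the one in the hypothesis only yields a feasible but sub-optimal quadrature. Handling this cleanly requires using the uniqueness of $\Phi_{m,2p-1}$ (from \cite{kraaPoly}) together with the structural rigidity of Theorems~\ref{fundamental-theorem3} and \ref{fundamental-corollary}, to conclude that the exponents of $\Phi_{m,2p-1}$ coincide with $\mathcal{M}_{m,p}({\bf n})\cup\{m\}$; once the node sets coincide, equation (\ref{zerocondition})/(\ref{SpectralRadius}) has a unique solution in the relevant interval, so $R_{m,2p-1}=\mathcal{R}_{m,p}({\bf n})$. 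Everything else — feasibility of $\Psi$, absolute monotonicity, the equivalence of the four viewpoints — is already packaged in the earlier results and is routine to cite.
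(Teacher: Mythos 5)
Your first direction is fine: the hypothesized non-negative solution to \eqref{KindLinearSystem} with integer nodes bounded by $m$ gives, via Corollary~\ref{HR} (or equivalently Theorem~\ref{EquivalenceTheorem}), that $\mathcal{R}_{m,p}({\bf n}) \le R_{m,2p-1}$. This matches the paper.

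The upper bound $R_{m,2p-1}\le\mathcal{R}_{m,p}({\bf n})$ is where your argument breaks down. Both of your proposed routes rely on comparing ${\bf n}$ with a (possibly different) configuration ${\bf n}'$ given by Theorem~\ref{Configuration1} and then asserting that the two must essentially coincide; you yourself flag that the ``rigidity'' step is the obstacle, and the second ``cleaner'' route contains a logical error: you write that $R_{m,2p-1}=\mathcal{R}_{m,p}({\bf n}')\le$ any value admitting a valid quadrature, but $R_{m,2p-1}$ is the \emph{supremum} of such values, not a lower bound for them, so the intended contradiction does not arise. Nothing you sketch rules out the scenario where ${\bf n}$ merely yields a feasible quadrature at its optimal spectral radius while some other configuration achieves a strictly larger threshold.

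What the paper actually does — and what you are missing — is to combine Proposition~\ref{SpectralMonotone} with the dual characterization in Theorem~\ref{FarkasLemma}, which you never invoke. Concretely: writing $R=\mathcal{R}_{m,p}({\bf n})$, for small $\epsilon>0$ the integral spectrum is unchanged and $\rho({\bf n},R+\epsilon)>m$ by strict monotonicity of the spectral radius; this means
\begin{equation*}
\frac{\int t\prod_{i=1}^{2p-2}(t-m_i)\,d\mu_{R+\epsilon}}{\int \prod_{i=1}^{2p-2}(t-m_i)\,d\mu_{R+\epsilon}} > m.
\end{equation*}
Taking $f(t)=(m-t)\prod_{k=1}^{2p-2}(t-m_k)$, which is non-negative on $\{0,1,\ldots,m\}$ because the nodes come in pairs $(q,q+1)$ and $m$ is the top node, one gets $\int f\,d\mu_{R+\epsilon}<0$. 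Theorem~\ref{FarkasLemma} then forces $R_{m,2p-1}\le R+\epsilon$ for every small $\epsilon$, hence $R_{m,2p-1}\le\mathcal{R}_{m,p}({\bf n})$. This is a local duality argument anchored at ${\bf n}$ itself; no comparison with an unknown ``optimal'' configuration ${\bf n}'$ is needed, which is precisely what lets the proof avoid the bookkeeping you anticipated as the obstacle.
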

\begin{proof}
	That $\mathcal{R}_{m,p}({\bf{n}}) \leq R_{m,2p-1}$ is a direct consequence of Corollary \ref{HR}.
	For the sake of simplicity, in the rest of the proof, we denote $\mathcal{R}_{m,p}({\bf{n}})$ simply by $R$. 
	Let $\epsilon>0$ be a small enough real number such that
	\begin{equation*}
	\mathcal{M}({\bf{n}},R+\epsilon)= \{m_1,m_2,\ldots,m_{2p-3},m_{2p-2}\},
	\end{equation*}
	and let $\rho({\bf{n}},R + \epsilon)$ be the associated spectral radius. 
	Then according to Proposition \ref{SpectralMonotone}, $\rho({\bf{n}},R + \epsilon) > m$,
	in other word 
	\begin{equation}
	\label{mbound}
	\frac{\int t \prod_{i=1}^{2p-2} (t-m_i) d\mu_{R+\epsilon}}{\int \prod_{i=1}^{2p-2} (t-m_i)
		d\mu_{R+\epsilon}} > m.
	\end{equation}
	The polynomial 
	\begin{equation*}
	f(t) = (m-t) \prod_{k=1}^{2p-2} (t - m_k)
	\end{equation*}  
	satisfies $f(j) \geq 0$ for $j=0,1,\ldots,m$. Moreover, using (\ref{mbound}) we obtain  
	\begin{equation*}
	\int f(t) d\mu_{R + \epsilon} = m \int \prod_{k=1}^{2p-2} (t-m_k)
	d\mu_{R+\epsilon} - \int \prod_{k=1}^{2p-2}  t (t-m_k)
	d\mu_{R+\epsilon} <0.
	\end{equation*} 
	Thus according to Theorem  \ref{FarkasLemma}, $R_{m,2p-1} \leq  R + \epsilon$ 
	for any $\epsilon$ small enough. This completes the proof.
\qed  
\end{proof}

\begin{example}
	\label{AlgorithmExample}
	To illustrate the importance of Theorem \ref{SingleConfiguration}, in this example 
	we compute the value $R_{100,5}$ using the $3$-configuration ${\bf{n}} = \{1,1\}$. 
	From the bounds (\ref{UpperBound}) and (\ref{LowerBound}), we have 
	\begin{equation*}
	\ell_{3}^{(95)} = 81.1972 \leq R_{100,5} \leq  83.023 = \ell_{3}^{(97)}. 
	\end{equation*}  
	Computing the associated integral spectrum and spectral radius, we obtain 
	\begin{equation*}
	\mathcal{M}({\bf{n}},\ell_{3}^{(95)}) = (66, 67, 81, 82) 
	\quad \textnormal{and} \quad 
	\rho({\bf{n}},\ell_{3}^{(95)}) =  98.01. 
	\end{equation*}
	and 
	\begin{equation*}
	\mathcal{M}({\bf{n}},\ell_{3}^{(97)}) = ( 68,69,83,84) 
	\quad \textnormal{and} \quad 
	\rho({\bf{n}},\ell_{3}^{(97)}) =    100.02. 
	\end{equation*}
	As the integral spectrum associated with $\ell_{3}^{(95)}$ and $\ell_{3}^{(97)}$ are different,
	we cannot yet compute the optimal integral spectrum associated with the configuration ${\bf{n}}$
	and the integer $m =100$. Now, one can show that the integral spectrum associated with $R = 82.7$ 
	is given by 
	\begin{equation*}
	\mathcal{M}({\bf{n}}, 82.7) = ( 68,69,83,84) 
	\quad \textnormal{and} \quad 
	\rho({\bf{n}},82.7) =     99.66. 
	\end{equation*}
	Since $\mathcal{M}({\bf{n}}, 82.7) = \mathcal{M}({\bf{n}},\ell_{3}^{(97)})$ and 
	$\rho({\bf{n}},82.7) \leq 100 \leq \rho({\bf{n}},\ell_{3}^{(97)})$ then, according 
	to Proposition \ref{SpectralMonotone}, the optimal integral spectrum associated with the 
	$3$-configuration ${\bf{n}}$ is $(m_1,m_2,m_3,m_4) = (68,69,83,84)$
	and the optimal spectrum radius $\rho({\bf{n}})$ is given by the unique positive zero 
	in $[82.7,83.032]$ of the equation 
	\begin{equation*}
	h_{5}(68,69,83,84,100;R) = 0, \quad i.e; \quad  \rho({\bf{n}}) \simeq  83.002. 
	\end{equation*}  
	Moreover, the coefficients $\alpha_1,\alpha_2,\ldots,\alpha_5$, solution to the 
	associated linear system (\ref{KindLinearSystem}) are given by 
	\begin{equation}
	\label{alphanumbers}
	(\alpha_1,\alpha_2,\ldots,\alpha_5) = ( 0.1188,0.0765,0.2095,0.4539,0.1413).
	\end{equation}  
	Since these coefficients are non-negative, we conclude according to Theorem 
	\ref{SingleConfiguration} that 
	\begin{equation*}
	R_{100,5} = \rho({\bf{n}}) \simeq 83.002,
	\end{equation*}
	or more precisely, $R_{100,5}$ is the unique positive zero of the polynomial equation 
	\begin{equation*}
	R^5 - 394R^4 + 62544R^3 - 5001012R^2 + 201456936R - 3271262400 = 0.
	\end{equation*}
	Moreover, the optimal threshold polynomial is given by
	\begin{equation*}
	\Phi_{100,5} (x) =  \sum_{k=1}^{5} \alpha_k \left(1 + \frac{x}{R_{100,5}}\right)^{m_k},
	\end{equation*}
	where $(m_1,m_2,m_3,m_4,m_5) = (68,69,83,84,100)$ and $\alpha_k, k=1,\ldots,5$, are given by
	(\ref{alphanumbers}). Note that if we want to compute $R_{101,5}$, it is better to start with the bounds 
	\begin{equation*}
	83.002 \simeq R_{100,5} < R_{101,5} \leq \ell_{3}^{(98)} \simeq 83.936.
	\end{equation*} 
\end{example}

\section{Algorithm for the computation of the optimal threshold factors}
Example \ref{AlgorithmExample} exhibits all the ingredients for computing the optimal threshold
factor $R_{m,2p-1}$ and its associated polynomial $\Phi_{m,2p-1}$. 
In this section, we go into the details of the algorithm and show how to improve several of its key ingredients.

\medskip
\noindent
{\bf{Computation of the integral spectrum:}}
Given a $p$-configuration ${\bf{n}}$ and a real number $R$, the computation of 
$\mathcal{M}({\bf{n}},R)$ and $\rho({\bf{n}},R)$ amounts to computing the zeros 
of the orthogonal  polynomials associated with the Christoffel transform measure 
$\mu^{\Omega}_{R}$ with an annihilator polynomial $\Omega$ of the form 
$\Omega(t) = \prod_{k=1}^{2s} (t - m_k)$. The orthogonal polynomials $(\Pi^{\Omega}_{n})_{n\geq 1}$
with respect to $\mu^{\Omega}_{R}$ can be constructed by means of the Christoffel formulas \cite{Gallant1}
\begin{equation*}
\Pi^{\Omega}_{n}(t) = \frac{1}{\Omega(t)} \begin{vmatrix}
C_{n}(t,R) & C_{n+1}(t,R)\ldots & C_{n+2s}(t,R) \\
C_{n}(m_1,R) & C_{n+1}(m_1,R)\ldots & C_{n+2s}(m_1,R) \\
\ldots     & \ldots             & \ldots        \\
C_{n}(m_{2s},R) & C_{n+1}(m_{2s},R)\ldots & C_{n+2s}(m_{2s},R) \\
\end{vmatrix}. 
\end{equation*}   
However, a more efficient method for computing $(\Pi^{\Omega}_{n})_{n\geq 1}$ and their zeros 
consists in deriving the three-term recurrence relation for $(\Pi^{\Omega}_{n})_{n\geq 1}$from the one of 
Poisson-Charlier polynomials. Recall that, if a set of orthogonal polynomials satisfies the three-term 
recurrence relation
\begin{equation}
\label{3-terms}
p_{i}(t)= (t - b_i) p_{i-1}(t) - g_{i} p_{i-2}(t),\quad  i = 1,2,\ldots,
\end{equation}  
with $p_{0}=1$ and $p_{-1}=0$, then the zeros of $p_{n}$ are the eigenvalues of the tridiagonal 
matrix $J_{n}$ 
\begin{equation*}
J_{n} =  
\begin{vmatrix}
b_0        & \sqrt{g_1}   &      &      &      &             \\
\sqrt{g_1} & b_{1}        & \sqrt{g_{2}} &    &  &              \\
& \sqrt{g_2} &  b_{2} &     &   &            \\
&             & .     & .           \\
&             & .     & \sqrt{g_{n-1}}           \\
&              & \sqrt{g_{n-1}}& b_{n-1}         \\
\end{vmatrix}. 
\end{equation*}
Poisson-Charlier polynomials satisfy (\ref{3-terms}) with $b_{i} = R+i-1$ and $g_{i} = (i-1)R$.  
The orthogonal polynomials $(\Pi^{\Omega}_{n})_{n\geq 1}$ satisfy the three-term recurrence relation
\begin{equation*}
\Pi^{\Omega,R}_{i}(t)= (t - B^{(2s)}_i) \Pi^{\Omega,R}_{i-1}(t) - G^{(2s)}_{i} \Pi^{\Omega,R}_{i-2}(t),\quad  i = 1,2,\ldots,
\end{equation*} 
with $\Pi^{\Omega,R}_{0}\equiv 1$ and $\Pi^{\Omega,R}_{-1}\equiv 0$ and the coefficients $B^{(2s)}_{i}$ and $G^{(2s)}_{i}$ are computed
using Algorithm \ref{alg:Christoffel} \cite{Gallant1}
\begin{algorithm}
	\caption{Computing the coefficients of the three-term recurrence relation of the orthogonal polynomials
	 $(\Pi^{\Omega}_{n})_{n\geq 1}$}
	\label{alg:Christoffel}
	-----------------------------------------------------------------------------
	\begin{algorithmic}
		\STATE{Define $B^{(0)}_{j} = b_{j} = R+j-1 ,  G^{(0)}_{j} = g_{j} = (j-1)R; \quad  j = 1, 2,\ldots$}
		\STATE{Define $B^{(k)}_{j}; G^{(k)}_{j}; \quad  j = 1, 2,\ldots; \quad k=1,2,\ldots,2s$} by \\ 
		$E_{0} = 0$ \\
		$Q_{j} = B^{(k-1)}_{j} - E_{j-1} - m_{k}$ \\
		$E_{j} =  G^{(k-1)}_{j+1}/Q_{j}$ \\  
		$B^{(k)}_{j} = m_{k} + Q_{j} + E_{j}$ \\
		$G^{(k)}_{j} = Q_{j} E_{j-1}$ \\
		\RETURN $B^{(2s)}_{j}, G^{(2s)}_{j}$
		
	-----------------------------------------------------------------------------	
	\end{algorithmic}
\end{algorithm}

\medskip
\noindent
{\bf{Computation of the optimal integral spectrum:}}
Let $m$ be a positive integer ($m \geq 2p-1$) and ${\bf{n}}$ be a $p$-configuration.
A consequence of Proposition \ref{SpectralMonotone} is that if $R_1$ and $R_2$ are two real 
numbers such that 
\begin{equation*}
\mathcal{M}({\bf{n}},R_1) = \mathcal{M}({\bf{n}},R_2) 
\quad and \quad 
\rho({\bf{n}},R_1) \leq m \leq  \rho({\bf{n}},R_2),
\end{equation*} 
then the optimal integral spectrum is given by $\mathcal{M}_{m,p}({\bf{n}}) = \mathcal{M}({\bf{n}},R_1)$ 
and the optimal spectral radius $\mathcal{R}_{m,p}({\bf{n}})$ is the unique positive real zero in $[R_1,R_2]$ of the polynomial   
equation
\begin{equation*}
h_{2p-1}(m_1,m_2,\ldots,m_{2p-2},m; R) = 0, 
\end{equation*}
where $(m_1,m_2,\ldots,m_{2p-2}) = \mathcal{M}({\bf{n}})$. In our algorithm, a search for the real numbers $R_1, R_2$ 
is performed via a dichotomy starting from the values $\ell_{p}^{(m-2p+1)}$ and $\ell_{p}^{(m-p)}$ 
obtained in our bounds of the optimal threshold factors. This is achieved using Algorithm \ref{alg:optimalspectrum}.

\vskip -0.4 cm
\begin{algorithm}
	\caption{Computation of the optimal integral spectrum}
	\label{alg:optimalspectrum}
	-----------------------------------------------------------------------------
	\begin{algorithmic}
		\STATE{Define  $R_{max} = \ell_{p}^{(m-p)}$ and $R_{min} = \ell_{p}^{(m-2p+1)}$}
		\WHILE{$\mathcal{M}({\bf{n}},R_{max}) \not= \mathcal{M}({\bf{n}},R_{min})$}
		\STATE{  $R = (R_{min} + R_{max})/2$ \\
			Compute $\mathcal{M}({\bf{n}},R)$ and $\rho({\bf{n}},R)$}
		\IF {$\rho({\bf{n}},R) > m$}
		\STATE{Set $R_{max} = R$, $\mathcal{M}({\bf{n}},R_{max}) = \mathcal{M}({\bf{n}},R)$}  
		\ELSE 
		\STATE{Set $R_{min} = R$, $\mathcal{M}({\bf{n}},R_{min}) = \mathcal{M}({\bf{n}},R)$}
		\ENDIF
		\ENDWHILE
		\STATE{Compute $\mathcal{R}_{m,p}({\bf{n}})$ as the unique positive zero in $[R_{min},R_{max}]$ of 
			$h_{2p-1}(\mathcal{M}({\bf{n}},R),m;r)=0$}
		\RETURN $\mathcal{R}_{m,p}({\bf{n}})$ and $\mathcal{M}_{m,p}({\bf{n}}) = \mathcal{M}({\bf{n}},R)$
		
		-----------------------------------------------------------------------------
	\end{algorithmic}
\end{algorithm}

\medskip
\noindent
{\bf{Computation of the coefficients $\alpha_k, k=1,2,\ldots,2p-1$:}}
Once we compute the optimal integral spectrum $\mathcal{M}_{m,p}({\bf{n}}) = (m_1,m_2,\ldots,m_{2p-2})$
and the optimal radius spectrum $\mathcal{R}_{m,p}({\bf{n}})$ associated with a $p$-configuration ${\bf{n}}$ and a 
positive integer $m$ ($m \geq 2p-1$), we need to check the non-negativity of the coefficients $\alpha_k, 
k=1,2,\ldots,2p-1$. These coefficients can be computed using formulas (\ref{positivitycondition}). 
However, a more efficient method for their computation is to take advantage of the Gaussian 
quadrature with respect to  Poisson-Charlier measures. Indeed, we have 
\begin{equation*}
\mathcal{H}_{2p-1}(x;\mathcal{R}_{m,p}({\bf{n}})) = \sum_{k=1}^{p} \omega_k (x- \lambda_k)^{2p-1},
\end{equation*}    
where $\omega_k, \lambda_k$ are the weights and the nodes of the Gaussian quadrature with respect to the measure 
$\mu_{\mathcal{R}_{m,p}({\bf{n}})}$. Thus using Equations (\ref{positivitycondition}), we obtain 
\begin{equation*}
\alpha_k = \sum_{i=1}^{p} \omega_{i} \prod_{j=1,j\not=k}^{2p} \frac{m_{j} - \lambda_{i}}{m_{j} - m_{i}}, 
\quad k=1,2,\ldots,2p-1, 
\end{equation*} 
where $m_{2p-1}=m$ and $m_{2p} = m+1$. An efficient algorithms for computing the weights and nodes of Gaussian 
quadrature from the three-term recurrence relation is the Golub-Welsch algorithm which can be found in \cite{golub}.


\medskip
\noindent
{\bf{Computation of the optimal threshold factors and associated polynomials:}}
The general algorithm for computing $R_{m,2p-1}$ and $\Phi_{m,2p-1}$ is given in Algorithm \ref{alg:optimalthreshold}.
 
\begin{algorithm}[h!]
	\caption{Computation of the optimal threshold factors}
	\label{alg:optimalthreshold}
	-----------------------------------------------------------------------------
	\begin{algorithmic}
		\STATE{Set  $\alpha_{k} = -1, k=1,2,\ldots,2p-1$}
		\WHILE{one of the $\alpha_k, k=1,2,\ldots,2p-1$ is negative}
		\STATE{Select a $p$-configuration ${\bf{n}}$} 
		\STATE{Compute $\mathcal{M}_{m,p}({\bf{n}})$ and $\mathcal{R}_{m,p}({\bf{n}})$}
		\STATE{Compute  $\alpha_{k}; k=1,2,\ldots,2p-1$  }
		\ENDWHILE
		\RETURN $\mathcal{M}_{m,p}({\bf{n}}),\mathcal{R}_{m,p}({\bf{n}}), \alpha_k, k=1,2,\ldots,2p-1$
		
		-----------------------------------------------------------------------------
	\end{algorithmic}
\end{algorithm}
The initial $p$-configuration ${\bf{n}}$ is chosen to be $\{1,1,\ldots,1\}$. This choice is motivated 
by the observation that this configuration always leads to the optimal threshold factors 
$R_{m,5}$ for $m=5,6,\ldots,2000$,\ie 
\begin{equation*}
R_{m,5} = \mathcal{R}_{m,3}(\{1,1\}) 
\quad \textnormal{for} \quad k=5,6,\ldots,2000.
\end{equation*}
The values of $R_{m,n}$ for $n=5,7,9,11$ and $m=5k, k=1,2,\ldots,40$ are shown in 
Table \ref{tab:table2}. An asterisk indicates a value for which the $p$-configuration 
$\{1,1,\ldots,1\}$ fails to provide for the optimal threshold factor,\ie for which 
\begin{equation*}
R_{m,2p-1} \not= \mathcal{R}_{m,p}(\{1,1,\ldots,1\}).
\end{equation*}
The values of $R_{m,3}$ are not given in Table \ref{tab:table2} since an explicit expression for these values is known and 
given in \cite{kraaPoly}. Note that, for each $m \geq 2p-1$, there may exist many $p$-configurations ${\bf{n}}$ such that
\begin{equation*}
R_{m,2p-1} = \mathcal{R}_{m,p}({\bf{n}} ).
\end{equation*}
Algorithm \ref{alg:optimalthreshold} terminates once it reaches any such
configurations. This partially explain the high efficiency of the algorithm. After the 
initialization of ${\bf{n}}$, we change the $p$-configurations randomly. 
Note that the complexity of the algorithm is independent of the degree $m$ 
of the polynomials.To make this more explicit, let us mention that 
the computational burden for computing either $R_{10,5}$ or $R_{10^{30},5}$ 
is exactly the same. This feature is absent in all the existing algorithms in the 
literature. For instance, in our experiment using Matlab in an Intel(R) Core(TM) 3.20 GHz environment, 
the computation of all the values $R_{m,5}, m=6,7,\ldots,2000$ took less than 84 seconds, 
while the computation of all the values $R_{m,7}, m=8,9,\ldots,2000$ took less than 110 seconds. 
\begin{table}[h!]
	\centering
	\label{tab:table2}
	\begin{tabular}{c|cccc} 	\hline
		&         & \hskip 2cm  $2p-1$        &           &                  \\       
		$m$ &  5      & 7             & 9         & 11               \\
		\hline
		5   &  1      &  --           & --        & --               \\
		10  &  4.8308 &  3.3733       & 2         & --               \\
		15  &  8.5757 & 6.8035        & 5.3363    & 4.1000$^{*}$     \\
		20  & 12.5512 & 10.3955       & 8.6207    & 7.1968           \\
		25  & 16.6426 & 14.1458       & 12.1181$^{*}$   & 10.4401    \\
		30  & 20.8355 & 18.0383       & 15.7996   & 13.8617          \\
		35  & 25.0687 & 21.9991       & 19.5069   & 17.3889          \\
		40  & 29.3824 & 26.0713       & 23.3347   & 21.0411          \\
		45  & 33.6959 & 30.1565       & 27.2467   & 24.7358          \\
		50  & 38.0717 & 34.3177       & 31.1936   & 28.5616$^{*}$    \\
		55  & 42.4783 & 38.5138       & 35.2269   & 32.3888          \\
		60  & 46.9045 & 42.7402       & 39.2661   & 36.3101          \\
		65  & 51.3742 & 47.0065$^{*}$ & 43.3863   & 40.2498          \\
		70  & 55.8354 & 51.2895       & 47.4942   & 44.2407          \\
		75  & 60.3433 & 55.6066       & 51.6671   & 48.2524          \\
		80  & 64.8353 & 59.9393       & 55.8486   & 52.3018          \\
		85  & 69.3699& 64.2863        & 60.0554   & 56.4079          \\
		90  & 73.8924 & 68.6808       & 64.3111   & 60.5126$^{*}$    \\
		95  & 78.4477 & 73.0643       & 68.5542   & 64.6361          \\
		100 & 83.0020 & 77.4830       & 72.8405   & 68.7990          \\
		105  & 87.5746 & 81.8947      & 77.1210   & 72.9860$^{*}$    \\
		110  & 92.1624 & 86.3262      & 81.4319   & 77.1740          \\
		115  & 96.7503 & 90.7940      & 85.7766   & 81.3905          \\
		120  & 101.3649 & 95.2447     & 90.0975   & 85.6207          \\
		125  & 105.9591 & 99.7148     & 94.4558   & 89.8748          \\
		130  & 110.5757 & 104.2063    & 98.8407   & 94.1471          \\
		135  & 115.2069 & 108.6953    & 103.2033  & 98.4163          \\
		140  & 119.8350 & 113.1950    & 107.5950  & 102.7054         \\
		145  & 124.4725 & 117.7118    & 111.9993  & 107.0167         \\
		150  & 129.1137 & 122.2256    & 116.4054  & 111.3141         \\
		155  & 133.7623 & 126.7511    & 120.8271  & 115.6493         \\
		160  & 138.4238 & 131.2958    & 125.2615  & 119.9974         \\
		165  & 143.0776 & 135.8267    & 129.7071  & 124.3378         \\
		170  & 147.7428 & 140.3765    & 134.1436  & 128.6815         \\
		175  & 152.4202 & 144.9467    & 138.6046  & 133.0649         \\
		180  & 157.0889 & 149.4953    & 143.0705  & 137.4426         \\
		185  & 161.7686 & 154.0689    & 147.5367  & 141.8242         \\
		190  & 166.4561 & 158.6436    & 152.0123  & 146.2122         \\
		195 & 171.1420 & 163.2216     & 156.5046  & 150.6200         \\
		200  & 175.8348 & 167.7992    & 160.9934  & 155.0274         \\ \hline
	\end{tabular}
	\caption{Value of the optimal threshold factors $R_{m,2p-1}$ computed using Algorithm 3.
		An asterisk indicates a value for which the $p$-configuration 
		$\{1,1,\ldots,1\}$ fails to provide for the optimal threshold factor.
	}
\end{table}
The values of $R_{m,n}$ and their associated optimal polynomials $\Phi_{m,n}$ 
for different values of $m$ and $n$ are shown is Table \ref{tab:table2}. 
 
For a fair comparison of optimal methods with different number of stages, one should consider 
the effective optimal threshold factor $R^{\text{eff}}_{m,n}$ defined by 
\begin{equation*}
R^{\text{eff}}_{m,n} = \frac{R_{m,n}}{m}.
\end{equation*}             
Taking into account that 
\begin{equation*}
\ell^{(m-2p+1)}_{p} \leq R_{m,2p-1} \leq \ell^{(m-p)}_{p} 
\quad \text{and} \quad  
R_{m,2p-1} = R_{m+1,2p},  
\end{equation*}
and using the following asymptotic formula for the smallest zero, $\ell^{(\alpha)}_{n}$, of the generalized Laguerre polynomials \cite{calogero} 
\begin{equation*}
\ell_{n}^{(\alpha)} = \alpha + \sqrt{2 \alpha} h_1 + \frac{1}{3}(1 + 2n + 2h_1^2) + O\left(\frac{1}{\sqrt{\alpha}}\right) 
\quad \text{as} \quad \alpha \longrightarrow \infty,
\end{equation*}
where $h_1$ is the smallest zero of the Hermite polynomial $H_n$, 
we readily deduce that, for any non-negative integers $m \geq n$, we have 
\begin{equation*}
\lim_{m \to \infty} R^{\text{eff}}_{m,n} = 1.
\end{equation*}
Figure \ref{fig:effective} shows the values of $R^{\text{eff}}_{m,n}$ for different values $n$ and for $n+1 \leq m \leq 200$.
\begin{figure*}[h!]
	\hskip 2 cm
	\begin{overpic}[width=0.7\textwidth]{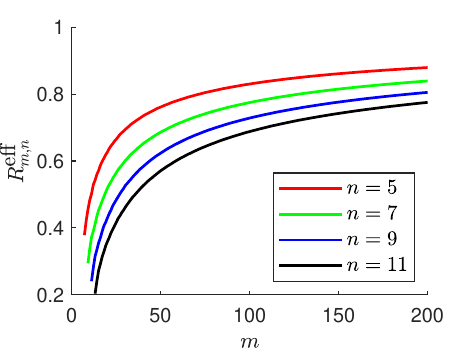}
	\end{overpic}
	\caption{The effective optimal threshold factors $R^{\text{eff}}_{m,n}$ for different values of $m$ and $n$}. 
	\label{fig:effective}	
\end{figure*}

\bigskip

In the following, we list some of the optimal threshold factors $R_{m,n}$ and their optimal threshold polynomials 
$\Phi_{m,n}$ in the form  
\begin{equation*}
\Phi_{m,n}(x) = \sum_{i=1}^{n} \alpha_i \left(1+\frac{x}{R_{m,n}}\right)^{m_i}, 
\end{equation*} 
where we denote by $m = (m_1,m_2,\ldots,m_n)$ and $\alpha = (\alpha_1,\alpha_2,\ldots,\alpha_m)$.

\bigskip

\noindent
$\bullet$ $R_{20,5} \simeq 12.5512$ is the unique positive zero of the polynomial 
\begin{equation*}
R^5 - 52\; R^4 + 1136\; R^3 - 13072\; R^2 + 79352\; R - 203840,
\end{equation*}
and 
\begin{equation*}
m = (7,8,13,14,20), \quad \alpha=(0.1000,0.1528,0.4632,0.1788,0.1052).
\end{equation*}

\bigskip

\noindent
$\bullet$ $R_{73,11} \simeq 46.6584$ is the unique positive zero of the polynomial 
\begin{equation*}
\begin{split}
& R^{11} - 461\; R^{10} + 97862\; R^9 - 12627810\; R^8 + 1100550780\; R^7 - 68023112220\; R^6 + \\
& 3042614045520\; R^5 - 98488786097520\; R^4 + 2261043160437600\; R^3 - \\ 
& 35061624289692000\; R^2 + 330528959503142400\; R - 1435096577615500800
\end{split}
\end{equation*}
and 
\begin{equation*}
m = (27,28,35,36,44,45,52,53,61,62,73), 
\end{equation*}
and 
\begin{equation*}
\alpha=(0.0035,0.0030,0.0073,0.1360,0.3892,0.0739,0.1202,0.2141,0.0029,0.0488).	
\end{equation*}

\noindent
$\bullet$ $R_{500,7} \simeq 448.6201$ is the unique positive zero of the polynomial 
\begin{equation*}
\begin{split}
&R^7 - 3080\; R^6 + 4070910\; R^5 - 2993154240\; R^4 + 1322177180580\; R^3 - \\
& 350893668171600\; R^2 + 51804236904804000\; R - 3282138666126179840
\end{split}
\end{equation*}
and 
\begin{equation*}
m = (401,402,433,434,465,466,500), 
\end{equation*}
and
\begin{equation*}
\alpha=(0.0487,0.0035,0.0079,0.4641,0.2489,0.1866,0.0403).
\end{equation*}

\section{Conclusion}
In this work we provide sharp upper and lower bounds for the optimal threshold 
factors of one-step methods. An efficient algorithm based on adaptive Christoffel 
transformations of Poisson-Charlier measure is proposed. A deep understanding 
of the set of $p$-configurations that lead to the optimal threshold factor or 
at least an estimate of the number of such configurations is missing and we believe 
it to be a rather challenging problem. Moreover, the techniques introduced in this work can be 
adapted to solve the following integer quadrature problem: let $\mu_{R}$ be a discrete finite 
positive measure supported on $\mathbb{N}$ and which depends on a parameter $R \in [0,\infty[$ 
\begin{equation*}
\mu_{R} = \sum_{j=0}^{\infty} a_j(R) \delta_{j}.
\end{equation*}   
Let us further assume that the zeros of the orthogonal polynomials associated with $\mu_{R}$ are strictly 
increasing functions of the parameter $R$. Then, one can adapt the techniques introduced in this work to compute 
the quantity $R_{m,n}$ defined as the supremum of positive real numbers $R$ such that $\mu_{R}$ admits a positive quadrature 
with integer nodes less or equal to $m$ and which is exact for polynomials of degree at most $n$. Details of this proposed 
solution will appear elsewhere.


\end{document}